\numberwithin{equation}{section}
\crefname{section}{Section}{Sections}
\crefname{subsection}{Subsection}{Subsections}
\crefname{condition}{Condition}{Conditions}
\crefname{hypothesis}{Hypothesis}{Hypothesis}
\crefname{assumption}{Assumption}{Assumptions}
\crefname{lemma}{Lemma}{Lemmas}
\crefname{claim}{Claim}{Claims}
\crefname{remark}{Remark}{Remarks}
\newtheorem{theorem}{Theorem}[section]
\newtheorem{lemma}[theorem]{Lemma}
\newtheorem{corollary}[theorem]{Corollary}
\newtheorem{proposition}[theorem]{Proposition}
\newtheorem{definition}[theorem]{Definition}
\newtheorem{remark}[theorem]{Remark}        
\numberwithin{equation}{section}
\def\Yint#1{\mathchoice
{\YYint\displaystyle\textstyle{#1}}%
{\YYint\textstyle\scriptstyle{#1}}%
{\YYint\scriptstyle\scriptscriptstyle{#1}}%
{\YYint\scriptscriptstyle\scriptscriptstyle{#1}}%
\!\iint}
\def\YYint#1#2#3{{\setbox0=\hbox{$#1{#2#3}{\iint}$}
\vcenter{\hbox{$#2#3$}}\kern-.50\wd0}}
\def\longdash{-\mkern-9.5mu-} 
\def\tiltlongdash{\rotatebox[origin=c]{18}{$\longdash$}}
\def\fiint{\Yint\tiltlongdash}
\def\XXint#1#2#3{{\setbox0=\hbox{$#1{#2#3}{\int}$}
\vcenter{\hbox{$#2#3$}}\kern-.50\wd0}}
\def\namedlabel#1#2{\begingroup
\def\@currentlabel{#2}%
\label{#1}\endgroup
}
\newcommand{\rmh}[1]{\mathpalette{\raisem@th{#1}}}
\newcommand{\raisem@th}[3]{\hspace*{-1pt}\raisebox{#1}{$#2#3$}}
\newcommand{\descref}[2]{\hyperref[#1]{\textcolor{black}{(}\textcolor{blue}{\bf #2}\textcolor{black}{)}}}
\newcommand{\dref}[2]{\hyperref[#1]{\textcolor{black}{(}\textcolor{blue}{\bf #2}\textcolor{black}{)}}}
\g@addto@macro\normalsize{%
\setlength\abovedisplayskip{3pt}
\setlength\belowdisplayskip{3pt}
\setlength\abovedisplayshortskip{1pt}
\setlength\belowdisplayshortskip{3pt}
}
\def\ps@pprintTitle{%
\let\@oddhead\@empty
\let\@evenhead\@empty
\def\@oddfoot{}%
\let\@evenfoot\@oddfoot}
\newcounter{whitney}
\newcounter{ineqcounter}
\begin{document}
\begin{frontmatter}
\title{On a mixed local-nonlocal evolution equation with singular nonlinearity}

\author{Kaushik Bal and Stuti Das}
\ead{kaushik@iitk.ac.in and stutid21@iitk.ac.in}

\address{Department of Mathematics and Statistics,\\ Indian Institute of Technology Kanpur, Uttar Pradesh, 208016, India}

\newcommand*{\avint}{\mathop{\, \rlap{--}\!\!\int}\nolimits}

\begin{abstract}
We will prove several existence and regularity results for the mixed local-nonlocal parabolic equation of the form
\begin{eqnarray}
\begin{split}
u_t-\Delta u+(-\Delta)^s u&=\frac{f(x,t)}{u^{\gamma(x,t)}}  \text { in } \Omega_T:=\Omega \times(0, T), \\ u&=0  \text { in }(\mathbb{R}^n \backslash \Omega) \times(0, T), \\ u(x, 0)&=u_0(x)  \text { in } \Omega ;
\end{split}
\end{eqnarray}
where 
\begin{equation*}
(-\Delta )^s u= c_{n,s}\operatorname{P.V.}\int_{\mathbb{R}^n}\frac{u(x,t)-u(y,t)}{|x-y|^{n+2s}} d y. 
\end{equation*}
Under the assumptions that $\gamma$ is a positive continuous function on $\overline{\Omega}_T$ and $\Omega$ is a bounded domain 
with Lipschitz boundary
in $\mathbb{R}^{n}$, $n> 2$, $s\in(0,1)$, $0<T<+\infty$, $f\geq 0$, $u_0\geq 0$, $f$ and $u_0$ belongs to suitable Lebesgue spaces. Here $c_{n,s}$ is a suitable normalization constant, and $\operatorname{P.V.}$ stands for Cauchy Principal Value.
\end{abstract}



\end{frontmatter}

\begin{singlespace}
\tableofcontents
\end{singlespace}

\section{Introduction}
 In this article, we study the evolution of a mixed local-nonlocal operator under the effect of a singular nonlinearity given by:

 \begin{equation}{\label{prob}}
\begin{split}
u_t-\Delta u+(-\Delta)^su&=\frac{f(x,t)}{u^{\gamma(x,t)}}
 \; \mbox{ in } \Omega_T:=\Omega \times (0,T)
 ,\\
u&=0 \;  \mbox{ in } (\mathbb{R}^n\backslash \Omega) \times (0,T),\\
u(x,0)&=u_0(x)   \mbox{ in } \Omega;
\end{split}
\end{equation}
where 
\begin{equation*}
(-\Delta )^s u:= c_{n,s}\operatorname{P.V.}\int_{\mathbb{R}^n}\frac{u(x,t)-u(y,t)}{|x-y|^{n+2s}} d y, 
\end{equation*}
$\gamma$ is a positive continuous function on $\overline{\Omega}_T$ with $\Omega$ being a bounded domain 
with Lipschitz boundary
in $\mathbb{R}^{n}$, $n> 2$, $s\in(0,1)$ and $0<T<+\infty$. Assuming that $f\geq 0$ and $u_0\geq 0$ with variable summability, in this paper we show several existence and regularity results. To this aim, we start by reviewing the literature concerning our problem.

Singular elliptic problems have been extensively studied in the literature for the past few decades starting with the now classical work of Crandall-Rabinowitz-Tartar \cite{CrRaTa}, who showed that the stationary state of \eqref{prob}, under Dirichlet boundary conditions given by
\begin{eqnarray}{\label{pb}}
\begin{split}
    -\Delta u&=\frac{f}{u^{\gamma(x)}}  \text { in } \Omega, \\
u&>0\;  \text { in } \Omega, \\
u&=0\;  \text { in } \partial \Omega.
\end{split}
\end{eqnarray}
admits a unique solution $u\in C^2(\Omega)\cap C(\bar{\Omega})$ for any $\gamma>0$ constant along with the fact that the solution must behave like a distance function near the boundary provided $f$ is H\"older Continuous. Interestingly enough Lazer-Mckenna \cite{LaMc} showed that the unique solution obtained by \cite{CrRaTa} is indeed in $H_0^1(\Omega)$ iff $0<\gamma<3$. Boccardo-Orsina \cite{orsina} in a beautiful paper showed that the followings regarding solutions of \eqref{pb} 
\begin{equation*}
    \begin{cases}u \in W_0^{1, \frac{n r(1+\gamma)}{n-r(1-\gamma)}}(\Omega) & \text { if } 0<\gamma<1 \text { and } f \in L^r(\Omega) \text { with } r \in\left[1,\left(2^* /(1-\gamma)\right)^{\prime}\right), \\ u \in H_0^1(\Omega) & \text { if } 0<\gamma<1 \text { and } f \in L^r(\Omega) \text { with } r=\left(2^* /(1-\gamma)\right)^{\prime}, \\ u \in H_0^1(\Omega) & \text { if } \gamma=1 \text { and } f \in L^1(\Omega), \\ u^{\frac{1+\gamma}{2}} \in H_0^1(\Omega) & \text { if } \gamma>1 \text { and } f \in L^1(\Omega),\end{cases}
\end{equation*}
hold, which was extended for variable $\gamma$, introducing certain conditions on its behaviour near the boundary in \cite{variablegamma}. The nonlocal variant given by
\begin{eqnarray*}
\begin{split}
    (-\Delta)_p^s u&=\frac{f}{u^{\gamma(x)}}  \text { in } \Omega, \\
u&>0  \text { in } \Omega, \\
u&=0  \text { in } \partial \Omega.
\end{split}
\end{eqnarray*}
was studied in \cite{fracsingular} for $p=2$ and $\gamma(x)=\gamma \in \mathbb{R}_*^{+}$, the authors proved the existence and uniqueness of positive solutions, according to the range of $\gamma$ and summability of $f$. For the quasilinear case, we refer \cite{fracvari1} for constant $\gamma>0$ and for variable singular exponent, the existence results have been obtained in \cite{fracvarisin}.
As for the Mixed local-nonlocal elliptic problem given by
\begin{eqnarray*}
\begin{split}
   -\Delta_p u+ (-\Delta)_p^s u&=\frac{f}{u^{\gamma(x)}}  \text { in } \Omega, \\
u&>0  \text { in } \Omega, \\
u&=0  \text { in } \partial \Omega;
\end{split}
\end{eqnarray*}
 Arora \cite{arora} for $p=2$ and $\gamma(x)=\gamma\in \mathbb{R}_*^{+}$, obtained the existence, uniqueness and regularity properties of the weak solutions by deriving uniform a priori estimates and using the approximation technique. They also obtained some existence and nonexistence results when $f$ behaves like a distance function. For the case $p>1$, the constant exponent $\gamma$ case has been considered in \cite{garain}, and the variable exponent can be found in \cite{Biroud}. If one considers the parabolic counterpart i.e, the equation given by:
\begin{eqnarray*}
\begin{split}
u_t-\Delta_p u&=\frac{f(x,t)}{u^{\gamma}}  \text { in } \Omega_T, \\ u&=0  \text { in }(\mathbb{R}^n \backslash \Omega) \times(0, T), \\ u(x, 0)&=u_0(x)  \text { in } \Omega .
\end{split}
\end{eqnarray*}
For such an equation with $p \geq 2,\;0 \leq f \in L^m\left(\Omega_T\right)$ with $m \geq 1$ and assuming that
$\forall \omega \subset \subset \Omega,\exists\; d_\omega>0$, such that $u_0 \geq d_\omega$,
the authors \cite{parabolic1} proved the existence of a solution $u$ of the above problem such that
\begin{eqnarray*}
    \begin{cases}u \in L^{q_0}(0, T ; W_0^{1, q_0}(\Omega))&\text { if } 0<\gamma<1 \text { and } f \in L^r(\Omega) \text { with } r \in\left[1,\frac{p(n+2)}{p(n+2)-n(1-\gamma)}\right)
\\&\text { and } q_0=\frac{m[n(p+\gamma-1)+p(\gamma+1)]}{n+2-m(1-\gamma)},
\\ u \in L^p(0, T ; W^{1, p}_0(\Omega))& \text { if } 0<\gamma\leq1 \text { and } f \in L^{m_0}\left(\Omega_T\right) \text{ with } m_0=\frac{p(n+2)}{p(n+2)-n(1-\gamma)},\\ u \in L^p(0, T ; W_{\operatorname{loc}}^{1, p}(\Omega)), & \text { if } \gamma>1 \text { and } f \in L^1(\Omega_T).
  \end{cases}
\end{eqnarray*}
The Nonlocal case $(s\in(0,1))$ for the parabolic problem was handled by \cite{abdellaoui1} for $\gamma>0$ constant to show existence and uniqueness results along similar lines. If one restricts the range of $\gamma$ then various existence, uniqueness and regularity results can be found in Bal-Badra-Giacomoni \cite{BaBaGi, BaBaGi1,BaBaGi2} and Giacomoni-Bougherera \cite{BoGi}. We would also like to mention that the regularity theory of mixed local and non-local operators plays a major role in our problem and we cite the following papers \cite{paper1,mingi,localb,garainholder,weakharnack,garain2023higher,par2,par3} and the references therein. \smallskip\\
As for the boundedness of our solutions, we refer Aronson-Serrin \cite{AS}, where the summability requirement of initial data for boundedness was introduced by Aronson and Serrin, for the case of second-order differential equations without singularity. Outside of the Aronson-Serrin domain, the optimal summability of solutions for the local case without singularity was obtained in Boccardo-Porzio-Primo \cite{Summability}. These results for the nonlocal case have been obtained in Peral \cite{Peral}, this too for the nonsingular case. For the mixed local-nonlocal operator with singularity, we will be able to get similar types of results here depending on the choice of $\gamma$. We will use suitable approximating problems to get the existence and other summability properties of weak solutions.
\subsection*{\textbf{Organization of the article}}
In the next section, we will describe some basic notations and fix some preliminary function spaces to define our solutions, followed by embedding results and other properties regarding those spaces. \smallskip\\Then we will introduce the notion of weak solution for the case $\gamma=0$ and show its existence, uniqueness, positivity and other properties. After that, we write about the existence of weak solutions for approximating problems and give definitions of weak solutions for the singular cases, both for constant and variable exponents. We end this section by stating our main theorems regarding the existence and summability of weak solutions and appropriate comments.
\smallskip\\The next section contains the proofs of our main results, and we end with another section that gives the asymptotic behaviour of the solutions in a suitable sense.
\section{Preliminaries} 
\subsection{\textbf{Notations}} We gather here all the standard notations that will be used throughout the paper.
\smallskip\\$\bullet$ We will take $n$ to be the space dimension and denote by $z=(x, t)$ to be a point in $\mathbb{R}^n \times(0, T)$, where $(0,T)\subset \mathbb{R}$ for some $0<T<\infty$.\smallskip\\
$\bullet$ Let $\Omega$ be an open bounded domain in $\mathbb{R}^n$ with boundary $\partial \Omega$ and for $0<T <\infty$, let $\Omega_T:=\Omega \times(0, T)$.\smallskip\\
$\bullet$ We denote the parabolic boundary $\Gamma_T$ by $\Gamma_T=(\Omega\times\{t=0\})\cup(\partial\Omega\times(0,T))$.\smallskip\\ 
$\bullet$ We define the set $(\Omega_T)_\delta=\{(x,t)\in\Omega_T:\operatorname{dist}((x,t),\Gamma_T)<\delta\}$ for $
\delta>0$ fixed.
\smallskip\\$\bullet$ We shall alternately use $\partial_t g$ or $\frac{\partial g}{\partial t}\text{ or } g_t $ to denote the time derivative (partial) of a function $g$.\smallskip\\
$\bullet$ For $r>1$, the H\"older conjugate exponent of $r$ will be denoted by $r^\prime=\frac{r}{r-1}$.\smallskip\\
$\bullet$ The Lebesgue measure of a measurable subset $\mathrm{S}\subset \mathbb{R}^n$ will be denoted by $|\mathrm{S}|$.\smallskip\\
$\bullet$ For any open subset $\Omega$ of $\mathbb{R}^n$, $K\subset\subset \Omega $ will imply $K$ is compactly contained in $\Omega.$\smallskip\\
$\bullet$ $\int$ will denote integration concerning either space or time only, and integration on $\Omega \times \Omega$ or $\mathbb{R}^n \times \mathbb{R}^n$ will be denoted by a double integral $\iint$.\smallskip\\
$\bullet$ We will use $\iiint$ to denote integral over $\mathbb{R}^n \times \mathbb{R}^n \times(0, T)$.
\smallskip\\
$\bullet$ Average integral will be denoted by $\fint$.\\
$\bullet$ The notation $a \lesssim b$ will be used for $a \leq C b$, where $C$ is a universal constant which only depends on the dimension $n$ and sometimes on $s$ too. $C$ may vary from line to line or even in the same line.\smallskip\\
$\bullet$ $\langle.,.\rangle$ will denote the usual inner product in some associated Hilbert space.\smallskip\\
$\bullet$ For any function $h$, we denote the positive and negative parts of it by $h_+=\operatorname{max}\{h,0\}$ and $h_-=\operatorname{max}\{-h,0\}$ respectively.\smallskip\\
$\bullet$ For $k\in \mathbb{N}$, we denote $T_k(\sigma)=\max \{-k, \min \{k, \sigma\}\}$, for $\sigma \in \mathbb{R}$.
\subsection{\textbf{Function Spaces}}
In this section, we present 
 definitions and properties of some function spaces that will be useful for our work. We recall that for $E \subset \mathbb{R}^n$, the Lebesgue space
$L^p(E), 1 \leq p<\infty$, is defined to be the space of $p$-integrable functions $u: E \rightarrow \mathbb{R}$ with the finite norm
\begin{equation*}
\|u\|_{L^p(E)}=\left(\int_E|u(x)|^p d x\right)^{1 / p} .
\end{equation*}
By $L_{\operatorname{loc }}^p(E)$ we denote the space of locally $p$-integrable functions, which means, $u \in L_{\operatorname{loc }}^p(E)$ if and only if $u \in L^p(F)$ for every $F \subset\subset E$. In the case $0<p<1$, we denote by $L^p(E)$ a set of measurable functions such that $\int_E|u(x)|^p d x<\infty$.
\begin{definition}
    The Sobolev space $W^{1, p}(\Omega)$, for $1 \leq p<\infty$, is defined as the Banach space of locally integrable weakly differentiable functions $u: \Omega \rightarrow \mathbb{R}$ equipped with the following norm
\begin{equation*}
\|u\|_{W^{1, p}(\Omega)}=\|u\|_{L^p(\Omega)}+\|\nabla u\|_{L^p(\Omega)} .
\end{equation*}
\end{definition}
The space $W_0^{1, p}(\Omega)$ is defined as the closure of the space $\mathcal{C}_0^{\infty}(\Omega)$, in the norm of the Sobolev space $W^{1, p}(\Omega)$, where $\mathcal{C}^\infty_0(\Omega)$ is the set of all smooth functions whose supports are compactly contained in $\Omega$.
\begin{definition}
    Let $0<s<1$ and $\Omega$ be a open connected subset of $\mathbb{R}^n$. The fractional Sobolev space $W^{s, q}(\Omega)$ for any $1\leq q<+\infty$ is defined by
\begin{equation*}
    W^{s, q}(\Omega)=\left\{u \in L^q(\Omega): \frac{|u(x)-u(y)|}{|x-y|^{\frac{n}{q}+s}} \in L^q(\Omega\times\Omega)\right\},
\end{equation*}
and it is endowed with the norm
\begin{equation}{\label{norm}}
\|u\|_{W^{s, q}(\Omega)}=\left(\int_{\Omega}|u(x)|^q d x+\int_{\Omega} \int_{\Omega} \frac{|u(x)-u(y)|^q}{|x-y|^{n+q s}}d x d y\right)^{1/q}.
\end{equation}
\end{definition}
It can be treated as an intermediate space between $W^{1,q}(\Omega)$ and $L^q(\Omega)$. For $0<s\leq s^{\prime}<1$, $W^{s^{\prime},q}(\Omega)$ is continuously embedded in $W^{s,q}(\Omega)$, see [\citealp{frac}, Proposition 2.1]. The fractional Sobolev space with zero boundary values is defined by
\begin{equation*}
W_0^{s, q}(\Omega)=\left\{u \in W^{s, q}(\mathbb{R}^n): u=0 \text { in } \mathbb{R}^n \backslash \Omega\right\}.
\end{equation*}
However $W_0^{s, q}(\Omega)$ can be treated as the closure of $\mathcal{C}^\infty_0(\Omega)$ in $W^{s,q}(\Omega)$ with respect to the fractional Sobolev norm defined in \cref{norm}. Both $W^{s, q}(\Omega)$ and $W_0^{s, q}(\Omega)$ are reflexive Banach spaces, for $q>1$, for details we refer to the readers [\citealp{frac}, Section 2].\smallskip\\
The following result asserts that the classical Sobolev space is continuously embedded in the fractional Sobolev space; see [\citealp{frac}, Proposition 2.2]. The idea applies an extension property of $\Omega$ so that we can extend functions from $W^{1,q}(\Omega)$ to $W^{1,q}(\mathbb{R}^n)$ and that the extension operator is bounded.
\begin{lemma}{\label{embedding}}
    Let $\Omega$ be a smooth bounded domain in $\mathbb{R}^n$ and $0<s<1$. There exists a positive constant $C=C(\Omega, n, s)$ such that
\begin{equation*}
\|u\|_{W^{s, q}(\Omega)} \leq C\|u\|_{W^{1,q}(\Omega)},
\end{equation*}
for every $u \in W^{1,q}(\Omega)$.
\end{lemma}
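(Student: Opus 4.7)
The plan is to follow the standard route for embeddings of classical Sobolev spaces into fractional ones. First I would use the fact that $\Omega$ is smooth (Lipschitz suffices) to invoke an extension operator $E\colon W^{1,q}(\Omega)\to W^{1,q}(\mathbb{R}^n)$ so that $\tilde{u}:=Eu$ satisfies $\|\tilde{u}\|_{W^{1,q}(\mathbb{R}^n)}\leq C(\Omega)\|u\|_{W^{1,q}(\Omega)}$. The $L^q$ part of the $W^{s,q}(\Omega)$ norm is controlled trivially, so the task reduces to estimating the Gagliardo seminorm
\begin{equation*}
[u]_{s,q}^{q}:=\int_{\Omega}\int_{\Omega}\frac{|u(x)-u(y)|^{q}}{|x-y|^{n+qs}}\,dx\,dy
\end{equation*}
in terms of $\|u\|_{L^q(\Omega)}+\|\nabla u\|_{L^q(\Omega)}$.

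The key step is to split this double integral along the diagonal, writing $[u]_{s,q}^q=I_1+I_2$ where $I_1$ is taken over $\{(x,y)\in\Omega\times\Omega:|x-y|<1\}$ and $I_2$ over $\{(x,y)\in\Omega\times\Omega:|x-y|\geq 1\}$. For $I_2$ I would use the elementary inequality $|u(x)-u(y)|^q\leq 2^{q-1}(|u(x)|^q+|u(y)|^q)$ together with Fubini's theorem and the convergent tail integral $\int_{|z|\geq 1}|z|^{-n-qs}\,dz<\infty$ (finite because $qs>0$) to obtain $I_2\leq C(n,s,q)\|u\|_{L^q(\Omega)}^q$.

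For $I_1$, I would work with the extension $\tilde{u}$ and use the fundamental theorem of calculus along segments:
\begin{equation*}
|\tilde{u}(x)-\tilde{u}(y)|\leq |x-y|\int_{0}^{1}|\nabla \tilde{u}(x+t(y-x))|\,dt.
\end{equation*}
Raising to the $q$-th power via Jensen's inequality, changing variables $z=y-x$ and then $w=x+tz$ (Fubini in $t$, $z$, $x$), I would bound
\begin{equation*}
I_{1}\leq \int_{|z|<1}\frac{|z|^{q}}{|z|^{n+qs}}\,dz\,\cdot\,\|\nabla \tilde{u}\|_{L^{q}(\mathbb{R}^{n})}^{q},
\end{equation*}
where the prefactor $\int_{|z|<1}|z|^{q(1-s)-n}\,dz$ is finite exactly because $s<1$ forces $q(1-s)>0$. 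Combining with the extension estimate yields $I_{1}\leq C(n,s,q,\Omega)\|\nabla u\|_{L^{q}(\Omega)}^{q}$.

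Putting the three pieces together gives the claimed bound with $C=C(\Omega,n,s)$ (and $q$, which is fixed). I do not anticipate a real obstacle here: the only delicate point is ensuring the segment $[x,y]$ used in the FTC argument lies in a set where the integrand is meaningful, which is precisely why one passes to the $\mathbb{R}^n$-extension rather than trying to work inside $\Omega$ directly.
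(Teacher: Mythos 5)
Your proof is correct and is essentially the argument behind the result the paper invokes: the paper does not prove \cref{embedding} itself but cites [\citealp{frac}, Proposition 2.2], whose proof is exactly your decomposition of the Gagliardo seminorm into $|x-y|\geq 1$ (controlled by $\|u\|_{L^q}$ and the convergent tail integral) and $|x-y|<1$ (controlled via the bounded extension to $W^{1,q}(\mathbb{R}^n)$ and the fundamental theorem of calculus along segments). No substantive differences to report.
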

For the fractional Sobolev spaces with zero boundary value, the next embedding result follows from [\citealp{frac2}, Lemma 2.1]. The fundamental difference of it compared to \cref{embedding} is that the result holds for any bounded domain (without any condition of smoothness of the boundary), since for the Sobolev spaces with zero boundary value, we always have a zero extension to the complement.
\begin{lemma}{\label{embedding2}} Let $\Omega$ be a bounded domain in $\mathbb{R}^n$ and $0<s<1$. There exists a positive constant $C=C(n, s, \Omega)$ such that
\begin{equation*}
\int_{\mathbb{R}^n} \int_{\mathbb{R}^n} \frac{|u(x)-u(y)|^q}{|x-y|^{n+q s}} d x d y \leq C \int_{\Omega}|\nabla u|^qd x
\end{equation*}
for every $u \in W_0^{1,q}(\Omega)$. Here, we consider the zero extension of $u$ to the complement of $\Omega$.
\end{lemma}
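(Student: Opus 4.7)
The plan is to reduce the statement to Lemma 2.2 applied on a smooth enclosing ball, and to control the long-range contribution by an elementary tail estimate combined with the classical Poincar\'e inequality.

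I would begin by fixing a ball $B=B_R(0)\supset\overline{\Omega}$ large enough that $d_0:=\dist(\Omega,\mathbb{R}^n\setminus B)>0$. Since $u\in W_0^{1,q}(\Omega)$, its zero extension to $\mathbb{R}^n$, still denoted $u$, lies in $W^{1,q}(\mathbb{R}^n)$; in particular $u|_B\in W^{1,q}(B)$ with $\|u\|_{W^{1,q}(B)}=\|u\|_{W^{1,q}(\Omega)}$.

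Splitting $\mathbb{R}^n=B\cup(\mathbb{R}^n\setminus B)$ in both variables and using that $u\equiv 0$ outside $\Omega\subset B$ eliminates the contribution on $(\mathbb{R}^n\setminus B)\times(\mathbb{R}^n\setminus B)$ and reduces the two mixed pieces to $\Omega\times(\mathbb{R}^n\setminus B)$ (the part $(B\setminus\Omega)\times(\mathbb{R}^n\setminus B)$ vanishes because both values of $u$ are zero there). Hence
\begin{align*}
\iint_{\mathbb{R}^n\times\mathbb{R}^n}\frac{|u(x)-u(y)|^q}{|x-y|^{n+qs}}\,dx\,dy \;=\; I_1 + 2I_2,
\end{align*}
where $I_1$ is the integral over $B\times B$ and $I_2$ is the integral over $\Omega\times(\mathbb{R}^n\setminus B)$. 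For $I_1$ I would invoke Lemma 2.2 on the smooth domain $B$ to obtain $I_1\leq C\|u\|_{W^{1,q}(B)}^q = C\|u\|_{W^{1,q}(\Omega)}^q$. For $I_2$, since $u(y)=0$ on $\mathbb{R}^n\setminus B$ and $|x-y|\geq d_0$ for $x\in\Omega$, $y\notin B$, one has
\begin{align*}
I_2 \;\leq\; \int_\Omega |u(x)|^q\!\!\int_{|x-y|\geq d_0}\!\!\frac{dy}{|x-y|^{n+qs}}\,dx \;=\; C(n,s,q,d_0)\,\|u\|_{L^q(\Omega)}^q.
\end{align*}
Adding the two bounds and invoking the classical Poincar\'e inequality on the bounded set $\Omega$ (available because $u\in W_0^{1,q}(\Omega)$) to absorb $\|u\|_{L^q(\Omega)}^q$ into $\|\nabla u\|_{L^q(\Omega)}^q$ finishes the proof.

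The main obstacle is the smoothness requirement of Lemma 2.2 versus the mere boundedness of $\Omega$; this is circumvented precisely by enlarging $\Omega$ to the smooth ball $B$, which is legitimate because the zero extension of a $W_0^{1,q}(\Omega)$ function is globally in $W^{1,q}(\mathbb{R}^n)$ without any boundary regularity hypothesis. Apart from this, the argument is a routine splitting plus a Poincar\'e absorption, with only the elementary radial computation of the tail integral to verify.
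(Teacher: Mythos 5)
Your proof is correct: the splitting into $B\times B$, the two mixed regions, and the far-far region, together with the tail estimate $\int_{|x-y|\ge d_0}|x-y|^{-n-qs}\,dy=C(n,q,s)d_0^{-qs}$ and Poincar\'e for $W_0^{1,q}(\Omega)$ (which indeed needs no boundary regularity), is exactly the standard argument behind the reference the paper cites, and the paper itself gives no proof beyond that citation. The one point worth making explicit is that the zero extension of a $W_0^{1,q}(\Omega)$ function lies in $W^{1,q}(\mathbb{R}^n)$ by density of $\mathcal{C}_0^\infty(\Omega)$, which you correctly invoke to legitimize applying Lemma~\ref{embedding} on the smooth ball $B$.
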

We now proceed with the basic Poincar\'{e} inequality, which can be found in [\citealp{LCE}, Chapter 5, Section 5.8.1].
\begin{lemma}{\label{p}}
  Let $\Omega\subset \mathbb{R}^n$ be a bounded domain with $\mathcal{C}^1$ boundary and $q\geq 1$. Then there exist a positive constant $C>0$ depending only on $n$ and $ \Omega$, such that \begin{equation*} 
  \int_\Omega |u|^q d x\leq C\int_\Omega |\nabla u|^q d x, \qquad\forall u\in W^{1,q}_0(\Omega).
  \end{equation*}
  Specifically if we take $\Omega=B_r$, then we will get for all $u\in W^{1,q}(B_r)$,
  \begin{equation*}
  \fint_{B_{r}}\left|u-(u)_{B_r}\right|^q d x \leq c r^q \fint_{B_{r}} |\nabla u|^qd x,
  \end{equation*}
  where $c$ is a constant depending only on $n$, and $(u)_{B_r}$ denotes the average of $u$ in $B_r$, and $B_r$ denotes a ball of radius $r$ centered at $x_0\in \mathbb{R}^n$. Here, $\fint$ denotes the average integration.
 \end{lemma}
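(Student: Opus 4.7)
The statement comprises two classical facts, the standard Poincaré inequality on a bounded domain for zero-trace functions and the Poincaré–Wirtinger inequality on balls, so I would handle them in turn, first reducing each to smooth approximations by density in the respective Sobolev spaces.

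For the first inequality, the plan is to exploit only the fact that $\Omega$ is bounded, so boundary smoothness is not actually needed here. By density it suffices to take $u\in\mathcal{C}_0^\infty(\Omega)$. Enclose $\Omega$ in a slab $\{x\in\mathbb{R}^n:a\le x_1\le b\}$ and extend $u$ by zero to $\mathbb{R}^n$. Writing
\begin{equation*}
u(x_1,x')=\int_a^{x_1}\partial_1 u(t,x')\,dt,
\end{equation*}
applying Hölder's inequality in $t$, and then integrating the $q$-th power over the slab gives the bound with $C=C(q,\operatorname{diam}\Omega)$. Passing to the limit in a sequence $u_k\to u$ in $W_0^{1,q}(\Omega)$ completes the argument.

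For the Poincaré–Wirtinger inequality on $B_r$, the cleanest plan is a scaling plus compactness argument. A change of variables $v(y)=u(x_0+ry)$ reduces the claim to the case $r=1$, so it suffices to show
\begin{equation*}
\int_{B_1}|v-(v)_{B_1}|^q\,dy\le c\int_{B_1}|\nabla v|^q\,dy\qquad\forall v\in W^{1,q}(B_1).
\end{equation*}
Argue by contradiction: if the inequality fails, pick $v_k\in W^{1,q}(B_1)$ with $(v_k)_{B_1}=0$, $\|v_k\|_{L^q(B_1)}=1$, and $\|\nabla v_k\|_{L^q(B_1)}\to 0$. The sequence is bounded in $W^{1,q}(B_1)$, so by the Rellich--Kondrachov theorem a subsequence converges in $L^q(B_1)$ to some $v$ with $(v)_{B_1}=0$ and $\|v\|_{L^q(B_1)}=1$. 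Lower semicontinuity of the Dirichlet energy forces $\nabla v=0$, so $v$ is constant on the connected set $B_1$, and the zero-average condition yields $v\equiv0$, contradicting $\|v\|_{L^q(B_1)}=1$.

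The only non-routine ingredient is the compact embedding $W^{1,q}(B_1)\hookrightarrow L^q(B_1)$ used in the contradiction step, which I would simply quote from standard references. Everything else is either the fundamental theorem of calculus, Hölder's inequality, or a rescaling, so the primary substantive point to keep track of is the dependence of the constant: in the first inequality $C$ depends on $n,q$ and $\operatorname{diam}\Omega$, while in the second the scaling produces the characteristic factor $r^q$ with $c$ depending only on $n$ and $q$.
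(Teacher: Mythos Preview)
Your proof is correct and is precisely the standard argument one finds in the reference the paper invokes: the paper does not prove this lemma at all but simply cites Evans' textbook (Chapter~5, Section~5.8.1), and the slab/fundamental-theorem-of-calculus argument for the zero-trace inequality together with the scaling-plus-Rellich--Kondrachov contradiction argument for the mean-zero inequality on balls are exactly the proofs given there. One small remark: the paper's statement says $C$ depends only on $n$ and $\Omega$, but as you correctly note the constant also depends on $q$; this is a minor imprecision in the paper's formulation, not in your argument.
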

Using \cref{embedding2}, and the above Poincar\'e inequality, we observe that the following norm on the space $W^{1,q}_0(\Omega)$ defined by 
 \begin{equation*}
\|u\|_{W^{1,q}_0(\Omega)}=\left(\int_\Omega |\nabla u|^q d x +\int_{\mathbb{R}^n} \int_{\mathbb{R}^n} \frac{|u(x)-u(y)|^q}{|x-y|^{n+q s}} d x d y \right)^{\frac{1}{q}},
\end{equation*}
is equivalent to the norm
 \begin{equation*}
\|u\|_{W^{1,q}_0(\Omega)}=\left(\int_\Omega |\nabla u|^q d x  \right)^{\frac{1}{q}} .     
 \end{equation*}
The following is a version of fractional Poincar\'{e}.
\begin{lemma}{\label{fracpoin}}
Let $\Omega\subset \mathbb{R}^n$ be a bounded domain with $\mathcal{C}^1$ boundary and let $s \in(0,1)$ and $q\geq 1$. If $u \in W^{s,q}_0(\Omega)$, then
\begin{equation*}
\int_\Omega |u|^q d x \leq c\int_{\Omega} \int_{\Omega} \frac{|u(x)-u(y)|^q}{|x-y|^{n+qs}} d x d y ,
\end{equation*}
holds with $c \equiv c(n, s,\Omega)$.
\end{lemma}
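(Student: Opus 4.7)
The plan is to prove \cref{fracpoin} by a standard contradiction-and-compactness scheme, in close analogy with the classical proof of \cref{p}. Suppose the inequality fails; then for each $k\in\NN$ we can find $u_k\in W^{s,q}_0(\Omega)$ with $\|u_k\|_{L^q(\Omega)}=1$ and
\begin{equation*}
\int_\Omega\int_\Omega\frac{|u_k(x)-u_k(y)|^q}{|x-y|^{n+qs}}\,dx\,dy<\frac{1}{k}.
\end{equation*}
In particular $\{u_k\}$ is bounded in the norm \cref{norm} of $W^{s,q}(\Omega)$.

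Since $\Omega$ has $C^1$ boundary, the Rellich-type compact embedding $W^{s,q}(\Omega)\hookrightarrow L^q(\Omega)$ from \cite{frac} furnishes a subsequence (not relabelled) converging strongly in $L^q(\Omega)$ and almost everywhere to some $u\in L^q(\Omega)$, with $\|u\|_{L^q(\Omega)}=1$. Applying Fatou's lemma to the Gagliardo kernel (using the a.e.\ convergence of $u_k$) yields
\begin{equation*}
\int_\Omega\int_\Omega\frac{|u(x)-u(y)|^q}{|x-y|^{n+qs}}\,dx\,dy \le \liminf_{k\to\infty}\int_\Omega\int_\Omega\frac{|u_k(x)-u_k(y)|^q}{|x-y|^{n+qs}}\,dx\,dy = 0,
\end{equation*}
so $u$ must be almost everywhere constant on each connected component of $\Omega$.

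The closing step is to show this constant is zero. Here the $W^{s,q}_0(\Omega)$ structure enters: each $u_k$ admits a zero extension $\tilde u_k\in W^{s,q}(\mathbb{R}^n)$, and on a fixed ball $B_R\supset\overline\Omega$ one has $\tilde u_k\to\tilde u$ in $L^q(B_R)$ with $\tilde u\equiv 0$ on $B_R\setminus\Omega$. Combined with the fact that $u$ is constant on each component of $\Omega$, a trace/consistency argument using the $C^1$ regularity of $\partial\Omega$ (equivalently, the fractional Hardy control near the boundary inherited from the $W^{s,q}_0(\Omega)$ membership of the approximating sequence) rules out a nonzero constant value, forcing $u\equiv 0$. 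This contradicts $\|u\|_{L^q(\Omega)}=1$ and completes the argument.

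The main obstacle is precisely this final step, namely upgrading ``$u$ is constant on each component of $\Omega$'' to ``$u\equiv 0$'', solely from the vanishing of the $\Omega\times\Omega$ Gagliardo seminorm together with the zero-extension property of the approximating sequence. The boundary regularity of $\partial\Omega$ and the $W^{s,q}_0(\Omega)$ constraint are essential in this passage; the first two steps (bounded sequence, compactness, Fatou) are routine once this is settled.
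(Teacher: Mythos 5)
The paper states \cref{fracpoin} without proof, so there is nothing to compare your argument against; I am judging it on its own. Your first three steps (normalised contradicting sequence, the compact embedding $W^{s,q}(\Omega)\hookrightarrow\hookrightarrow L^q(\Omega)$ for an extension domain from \cite{frac}, and Fatou applied to the Gagliardo kernel) are sound and give a limit $u$ with $\|u\|_{L^q(\Omega)}=1$ that is a.e.\ constant on $\Omega$. The problem is the final step, which you yourself flag as the obstacle: it is not merely unfinished, it cannot be completed in the stated generality. Knowing that the zero extensions $\tilde u_k$ converge to a function vanishing a.e.\ outside $\Omega$ tells you nothing across $\partial\Omega$, which is a null set; and indeed when $sq<1$ the function $\chi_\Omega$ lies in $W^{s,q}(\mathbb{R}^n)$ for a Lipschitz bounded $\Omega$ (its seminorm is comparable to $\int_\Omega \operatorname{dist}(x,\partial\Omega)^{-sq}\,dx<\infty$), hence belongs to $W^{s,q}_0(\Omega)$ as defined in the paper, has vanishing $\Omega\times\Omega$ seminorm, and has $\|\chi_\Omega\|_{L^q(\Omega)}=|\Omega|^{1/q}>0$. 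So the inequality with the seminorm taken only over $\Omega\times\Omega$ is false for $sq<1$, and the ``fractional Hardy control'' you invoke (with the $\Omega\times\Omega$ seminorm on the right) is available precisely when $sq>1$. Your scheme therefore only closes under the extra hypothesis $sq>1$, where a nonzero constant is excluded from $W^{s,q}_0(\Omega)$ by the Hardy inequality $\int_\Omega |u|^q\operatorname{dist}(x,\partial\Omega)^{-sq}dx\lesssim [u]_{W^{s,q}(\Omega)}^q$ — which, note, already implies the Poincar\'e inequality directly and makes the compactness detour unnecessary.

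What the paper actually needs elsewhere (the norm equivalences on $W^{1,q}_0$ and $X^s_0$) is the version with the seminorm over $\mathbb{R}^n\times\mathbb{R}^n$ (or over $Q$), and that version has a two-line direct proof with no regularity of $\partial\Omega$ and no compactness: for $u\in W^{s,q}_0(\Omega)$, $x\in\Omega$ and $|y-x|\ge d:=\operatorname{diam}\Omega$ one has $u(y)=0$, so
\begin{equation*}
\int_{\mathbb{R}^n}\int_{\mathbb{R}^n}\frac{|u(x)-u(y)|^q}{|x-y|^{n+sq}}\,dy\,dx
\;\ge\;\int_{\Omega}|u(x)|^q\int_{\mathbb{R}^n\setminus B_d(x)}\frac{dy}{|x-y|^{n+sq}}\,dx
\;=\;\frac{|\mathbb{S}^{n-1}|}{sq\,d^{sq}}\int_{\Omega}|u|^q\,dx .
\end{equation*}
I recommend either proving that statement instead, or adding the hypothesis $sq>1$ if the $\Omega\times\Omega$ seminorm must be kept.
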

In view of \cref{fracpoin}, we observe that the Banach space $W_0^{s, q}(\Omega)$ can be endowed with the norm
\begin{equation*}
\|u\|_{W_0^{s, q}(\Omega)}=\left(\int_{\Omega} \int_{\Omega} \frac{|u(x)-u(y)|^q}{|x-y|^{n+q s}} d x d y\right)^{\frac{1}{q}},
\end{equation*}
which is equivalent to that of $\|u\|_{W^{s, q}(\Omega)}$. For $q=2$, the space $W^{s,2}(\Omega)$ enjoys certain special properties and we denote $W^{s, 2}(\Omega)=H^s(\Omega)$ and $W_0^{s, 2}(\Omega)=H_0^s(\Omega)$. Endowed with the inner product
\begin{equation*}
\langle u, v\rangle_{H_0^s(\Omega)}=\int_{\Omega} \int_{\Omega} \frac{(u(x)-u(y))(v(x)-v(y))}{|x-y|^{n+2 s}} d x d y ,
\end{equation*}
we note that $(H_0^s(\Omega),\|\cdot\|_{H_0^s(\Omega)})$ is a Hilbert space. Similar thing holds for the space $W^{1,2}_0(\Omega)=H^1_0(\Omega).$
\begin{definition}
The space $X_0^s(\Omega)$ is defined as
\begin{equation*}
X_0^s(\Omega)=\left\{f \in H^s(\mathbb{R}^n) \text { s.t. } f=0 \text { a.e. in } \mathcal{C} \Omega\right\},
\end{equation*}
where $\mathcal{C} \Omega=\mathbb{R}^n \backslash \Omega$, and is endowed with the norm
\begin{equation*}
\|u\|_{X_0^s(\Omega)}=\left(\int_Q \frac{|u(x)-u(y)|^2}{|x-y|^{n+2 s}} d x d y\right)^{\frac{1}{2}},
\end{equation*}
where $Q:=\mathbb{R}^{2 n} \backslash(\mathcal{C} \Omega \times \mathcal{C} \Omega)$.    
\end{definition}
Moreover $W^{-1, q^{\prime}}(\Omega)$, $W^{-s, q^{\prime}}(\Omega)$ and $X^{-s}(\Omega)$ are defined to be the dual spaces of $W_0^{1, q}(\Omega)$, $W_0^{s, q}(\Omega)$ and $X_0^s(\Omega)$ respectively, where $q^{\prime}:=\frac{q}{q-1}$. Now, we define the local spaces as
\begin{equation*}
     W_{\operatorname{loc }}^{1, q}(\Omega)=\left\{u: \Omega \rightarrow \mathbb{R}: u \in L^q(K), \int_K |\nabla u|^q d x<\infty, \text { for every } K \subset \subset \Omega\right\} ,
\end{equation*}
and 
\begin{equation*}
     W_{\operatorname{loc }}^{s, q}(\Omega)=\left\{u: \Omega \rightarrow \mathbb{R}: u \in L^q(K), \int_K \int_K \frac{|u(x)-u(y)|^q}{|x-y|^{n+q s}} d x d y<\infty, \text { for every } K \subset \subset \Omega\right\} .
\end{equation*}
Now for $n>2$, we define the critical Sobolev exponent as $2^*=\frac{2n}{n-2}$, then we get the following embedding result for any open subset $\Omega$ of $\mathbb{R}^n$, see for details [\citealp{LCE}, Chapter 5],
\begin{theorem}{\label{Sobolev embedding}} Let $n>2 $. Then, there exists a constant $C$ depending only on $n$ and $\Omega$, such that for all $u \in \mathcal{C}_0^{\infty}(\Omega)$
\begin{equation*}
    \|u\|_{L^{2 ^*}(\Omega)}^2 \leq C \int_{\Omega} |\nabla u|^2 d x.
\end{equation*}
\end{theorem}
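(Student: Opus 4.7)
The plan is to prove this classical Gagliardo–Nirenberg–Sobolev inequality in two stages: first establish the endpoint $p=1$ inequality on $\RR^n$, then bootstrap it to $p=2$ via H\"older. Since $u\in \mathcal{C}_0^\infty(\Omega)$, we may extend it by zero to obtain a function in $\mathcal{C}_0^\infty(\RR^n)$ without changing either norm appearing in the statement, so it suffices to prove the inequality on all of $\RR^n$.

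For the case $p=1$, the goal is the bound $\|u\|_{L^{n/(n-1)}(\RR^n)} \leq C \|\nabla u\|_{L^1(\RR^n)}$. The key observation is that, since $u$ has compact support, for every $i=1,\dots,n$ one has
\begin{equation*}
|u(x)| \leq \int_{-\infty}^{x_i} |\partial_i u(x_1,\dots,t,\dots,x_n)|\,dt \leq \int_{\RR} |\partial_i u|\,dx_i.
\end{equation*}
Multiplying these $n$ inequalities and raising to the power $1/(n-1)$ gives
\begin{equation*}
|u(x)|^{n/(n-1)} \leq \prod_{i=1}^n \left(\int_{\RR}|\partial_i u|\,dx_i\right)^{1/(n-1)}.
\end{equation*}
The right-hand side is a product of $n$ factors each depending on all but one coordinate, so integrating successively in $x_1, x_2, \dots, x_n$ and applying the generalized H\"older inequality at each step (extracting the factor not depending on the variable of integration and using H\"older with $n-1$ conjugate exponents on the remaining ones) yields $\int |u|^{n/(n-1)}\,dx \leq \prod_i \|\partial_i u\|_{L^1}^{1/(n-1)} \leq \|\nabla u\|_{L^1}^{n/(n-1)}$, which is the desired inequality.

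For the case $p=2$, apply the $p=1$ inequality to the function $v := |u|^\gamma$ with $\gamma = \frac{2(n-1)}{n-2}$, chosen so that $\gamma \cdot \frac{n}{n-1} = 2^*$. Note that $v\in W^{1,1}(\RR^n)$ with $|\nabla v| = \gamma |u|^{\gamma-1}|\nabla u|$ (this can be justified by first replacing $|u|$ with $(u^2+\varepsilon)^{1/2}$ and passing to the limit). Then
\begin{equation*}
\left(\int_{\RR^n}|u|^{2^*}\,dx\right)^{(n-1)/n} \leq C\gamma \int_{\RR^n} |u|^{\gamma-1}|\nabla u|\,dx,
\end{equation*}
and since $\gamma - 1 = \frac{n}{n-2}$ satisfies $2(\gamma-1) = 2^*$, H\"older with conjugate exponents $2,2$ on the right gives
\begin{equation*}
\int_{\RR^n}|u|^{\gamma-1}|\nabla u|\,dx \leq \left(\int_{\RR^n}|u|^{2^*}\,dx\right)^{1/2}\left(\int_{\RR^n}|\nabla u|^2\,dx\right)^{1/2}.
\end{equation*}
Dividing through by $\left(\int |u|^{2^*}\right)^{1/2}$ (which is legitimate after first checking the inequality on a regularization if necessary, or by noting $u$ has compact support so this is finite) and squaring produces the stated bound with a constant depending only on $n$.

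The main obstacle, and the only truly non-routine step, is the case $p=1$, since the combinatorial integration-by-slices argument relying on the Loomis–Whitney inequality is what makes the critical exponent $2^*$ appear. Once that is in hand, the passage to $p=2$ is a clean algebraic manipulation via H\"older, and the dependence of the constant on $\Omega$ enters only through restricting back to $\Omega$ from $\RR^n$.
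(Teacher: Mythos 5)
Your proof is correct and is precisely the classical Gagliardo--Nirenberg--Sobolev argument (the $p=1$ slicing/Loomis--Whitney step followed by the bootstrap to $p=2$ via $|u|^{\gamma}$ with $\gamma=\tfrac{2(n-1)}{n-2}$ and H\"older), which is exactly the proof in Evans' book that the paper cites for this theorem rather than reproving it. The exponent bookkeeping ($\gamma\cdot\tfrac{n}{n-1}=2^{*}$, $2(\gamma-1)=2^{*}$) checks out, so nothing further is needed.
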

Similarly, for $n>2 s$, we define the fractional Sobolev critical exponent as $2_s^*=\frac{2 n}{n-2 s}$. The following result is a fractional version of the Sobolev inequality(\cref{Sobolev embedding}) which also implies a continuous embedding of $H_0^s(\Omega)$ in the critical Lebesgue space $L^{2_s^*}(\Omega)$. One can see the proof in \cite{frac}.
 \begin{theorem}{\label{Fractional Sobolev embedding}} Let $0<s<1$ be such that $n>2 s$. Then, there exists a constant $S(n, s)$ depending only on $n$ and $s$, such that for all $u \in \mathcal{C}_0^{\infty}(\Omega
 )$
\begin{equation*}
\|u\|_{L^{2_s^*}(\Omega
)}^2 \leq S(n, s) \int_{\Omega}\int_{\Omega
} \frac{|u(x)-u(y)|^2}{|x-y|^{n+2 s}} d x d y .    
\end{equation*}
\end{theorem}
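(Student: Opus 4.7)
The plan is to reduce matters to a full-space Sobolev embedding via zero extension, identify the Gagliardo seminorm with a Fourier multiplier norm, and then invoke the Hardy-Littlewood-Sobolev inequality for the Riesz potential.

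First, given $u \in \mathcal{C}_0^\infty(\Omega)$, I would extend $u$ by zero to $\mathbb{R}^n$. Since $\|u\|_{L^{2_s^*}(\Omega)} = \|u\|_{L^{2_s^*}(\mathbb{R}^n)}$ and, because $u$ vanishes strictly outside a compact subset of $\Omega$, the ``tail'' $2\int_\Omega |u(x)|^2 \big(\int_{\mathbb{R}^n \setminus \Omega} |x-y|^{-(n+2s)}\,dy\big)\,dx$ separating $\iint_{\Omega\times\Omega}$ from $\iint_{\mathbb{R}^{2n}}$ is finite and can be absorbed by a Poincar\'e-type argument, it suffices to prove the inequality with the Gagliardo integral taken over all of $\mathbb{R}^n\times\mathbb{R}^n$.

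For that full-space inequality, I would first use Plancherel (together with the standard evaluation of the Fourier transform of the kernel $|z|^{-(n+2s)}$ as a tempered distribution) to identify
$$\iint_{\mathbb{R}^{2n}} \frac{|u(x)-u(y)|^2}{|x-y|^{n+2s}}\,dx\,dy = c(n,s)\int_{\mathbb{R}^n}|\xi|^{2s}|\widehat{u}(\xi)|^2\,d\xi = c(n,s)\,\|(-\Delta)^{s/2} u\|_{L^2(\mathbb{R}^n)}^2.$$
Writing $g := (-\Delta)^{s/2} u$ and using the Riesz representation $u = I_s \ast g$ with kernel $I_s(x) = c'_{n,s}|x|^{s-n}$, the Hardy-Littlewood-Sobolev inequality (valid since $\tfrac{1}{2}-\tfrac{1}{2_s^*} = \tfrac{s}{n}$) would then give $\|u\|_{L^{2_s^*}(\mathbb{R}^n)} \lesssim \|g\|_{L^2(\mathbb{R}^n)}$ with a constant depending only on $n$ and $s$; squaring and chaining through the previous identity yields the claim.

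The main obstacle is the Hardy-Littlewood-Sobolev inequality itself, which is the nontrivial harmonic-analytic ingredient. Classical proofs proceed either via rearrangement (which in fact produces the sharp Aubin-Talenti constant) or via a weak-type endpoint bound followed by Marcinkiewicz interpolation. A Fourier-free alternative, essentially the route taken in the cited reference \cite{frac}, estimates the superlevel measure $|\{|u|>t\}|$ directly in terms of the Gagliardo seminorm by a dyadic covering argument and then concludes via the layer-cake formula.
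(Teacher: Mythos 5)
The paper itself offers no proof of this theorem: it defers entirely to \cite{frac}, whose argument is the elementary, Fourier-free one you mention only at the end (a level-set/dyadic covering bound followed by the layer-cake formula, valid for all $p\ge 1$). Your main route --- zero extension, Plancherel to identify the whole-space Gagliardo seminorm with $c(n,s)\|(-\Delta)^{s/2}u\|_{L^2(\mathbb{R}^n)}^2$, then Hardy--Littlewood--Sobolev for the Riesz potential $I_s$ --- is the classical $p=2$ proof of the \emph{whole-space} fractional Sobolev inequality and is sound for that statement; it is a genuinely different route from the cited reference, trading elementarity and generality in $p$ for standard harmonic-analysis machinery that is specific to $L^2$.

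The step I cannot accept as written is the reduction from $\iint_{\mathbb{R}^{2n}}$ to $\iint_{\Omega\times\Omega}$. Since $\iint_{\Omega\times\Omega}\le\iint_{\mathbb{R}^{2n}}$, the whole-space inequality is the \emph{weaker} of the two, and to recover the stated one you must dominate the tail $2\int_\Omega|u(x)|^2\bigl(\int_{\mathbb{R}^n\setminus\Omega}|x-y|^{-(n+2s)}\,dy\bigr)dx$ by $C(n,s)\iint_{\Omega\times\Omega}|u(x)-u(y)|^2|x-y|^{-n-2s}\,dx\,dy$. The inner integral behaves like $\operatorname{dist}(x,\partial\Omega)^{-2s}$, so what is required is a fractional Hardy inequality, not a Poincar\'e inequality: the crude bound $\operatorname{dist}(\operatorname{supp}u,\partial\Omega)^{-2s}\|u\|_{L^2(\Omega)}^2$ yields a constant depending on $u$, while the Hardy inequality with the regional seminorm fails for $s=1/2$ on bounded Lipschitz domains and otherwise carries an $\Omega$-dependent constant (as does the paper's own fractional Poincar\'e inequality, \cref{fracpoin}). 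In practice the theorem is used, and should be read, as the standard whole-space inequality for compactly supported functions --- which your argument does establish --- but as literally stated, with the $\Omega\times\Omega$ integral and a constant depending only on $n$ and $s$, your reduction leaves a genuine gap.
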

We now recall the Gagliardo-Nirenberg interpolation inequality that will be useful for proving the boundedness of weak solutions.
\begin{theorem}{\label{gagliardo}}
    (Gagliardo-Nirenberg) Let $1 \leq q <+\infty$ be a positive real number. Let $j$ and $m$ be non-negative integers such that $j<m$. Furthermore, let $1 \leq r \leq+\infty$ be a positive extended real quantity, $p \geq 1$ be real and $\theta \in[0,1]$ such that the relations
\begin{equation*}
\frac{1}{p}=\frac{j}{n}+\theta\left(\frac{1}{r}-\frac{m}{n}\right)+\frac{1-\theta}{q}, \quad \frac{j}{m} \leq \theta \leq 1
\end{equation*}
hold. Then,
\begin{equation*}
\left\|D^j u\right\|_{L^p\left(\mathbb{R}^n\right)} \leq C\left\|D^m u\right\|_{L^r\left(\mathbb{R}^n\right)}^\theta\|u\|_{L^q\left(\mathbb{R}^n\right)}^{1-\theta}
\end{equation*}
for any $u \in L^q(\mathbb{R}^n)$ such that $D^m u \in L^r(\mathbb{R}^n)$. Here, the constant $C>0$ depends on the parameters $j, m, n, q, r, \theta$, but not on $u$.
\end{theorem}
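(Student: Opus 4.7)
The plan is to establish the Gagliardo--Nirenberg inequality by first verifying that the scaling hypothesis on the exponents is necessary (and hence consistent), then treating the base case $j=0$, $m=1$, and finally reducing the general statement to the base case by an iteration/induction on the pair $(j,m)$.

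First I would carry out a dimensional analysis: replacing $u$ by $u_\lambda(x):=u(\lambda x)$ and computing how each of the three norms rescales in $\lambda$, one sees that unless the identity
\begin{equation*}
\frac{1}{p}-\frac{j}{n}=\theta\left(\frac{1}{r}-\frac{m}{n}\right)+(1-\theta)\frac{1}{q}
\end{equation*}
holds, the inequality fails for either $\lambda\to 0$ or $\lambda\to\infty$. This justifies the hypothesis on the exponents and, by the same homogeneity, shows it suffices to prove the inequality up to a dimensionless constant.

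Second, I would handle the base case $j=0$, $1\le r<n$, $m=1$. Here the endpoint $\theta=1$ is the classical Sobolev embedding $\|u\|_{L^{r^\ast}}\lesssim\|\nabla u\|_{L^r}$ with $r^\ast=\frac{nr}{n-r}$, which is proved by the standard slab-integration argument of Gagliardo and Nirenberg (estimating $|u|^{n/(n-1)}$ by a product of one-dimensional integrals and applying the generalised H\"older inequality). The endpoint $\theta=0$ is trivial, and for intermediate $\theta\in(0,1)$ one interpolates via H\"older between $L^q$ and $L^{r^\ast}$, noting that by the scaling relation the target exponent $p$ lies between $q$ and $r^\ast$. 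The cases $r=n$ and $r>n$ are treated similarly, using the BMO/Morrey versions of the Sobolev embedding or else writing $u$ as a Riesz potential and invoking the Hardy--Littlewood--Sobolev inequality.

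Third, I would extend from $(0,1)$ to general $(j,m)$ by induction, iterating the base inequality. For instance, writing $\|D^j u\|_{L^p}$ one first interpolates $D^j u$ between $D^{j+1}u$ and $D^{j-1}u$ via the base case, then keeps applying the same trick until only $D^m u$ and $u$ appear; each iteration multiplies exponents compatibly with the linear relation on $\frac{1}{p}$ above, so the final exponents agree with the stated formula. The parameter $\theta$ is then determined by this bookkeeping and automatically satisfies $j/m\le\theta\le 1$.

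The main obstacle I anticipate is keeping careful track of admissible exponents at the boundary of the parameter range: the endpoint $\theta=j/m$ with $r=\infty$ is delicate (here one must rule out the well-known Brezis--Wainger type exceptions that force $p<\infty$), and the case $m-j$ odd with $r=1$ or $r=\infty$ needs the $L^1$ or $L^\infty$ endpoint of the Sobolev embedding rather than the standard $L^r$--$L^{r^\ast}$ one. All other cases follow routinely from H\"older, the base Sobolev inequality, and the iteration described above.
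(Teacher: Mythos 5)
The paper does not actually prove this statement: \cref{gagliardo} is recalled as a classical result and used as a black box, so there is no in-paper argument to compare yours against. Judged on its own terms, your outline follows the textbook route (scaling analysis, the first-order Sobolev/H\"older base case, then induction on the order of the derivatives), but it contains a genuine gap at the induction step.

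The step ``interpolate $D^j u$ between $D^{j+1}u$ and $D^{j-1}u$ via the base case'' is not an instance of the base case $j=0$, $m=1$. What you prove in the base case is an estimate of $v$ in terms of $\nabla v$ and $v$ (Sobolev embedding plus H\"older interpolation of Lebesgue norms). What the induction actually requires is the intermediate-derivative inequality $\|\nabla v\|_{L^p\left(\mathbb{R}^n\right)}^2 \leq C\,\|D^2 v\|_{L^r\left(\mathbb{R}^n\right)}\,\|v\|_{L^q\left(\mathbb{R}^n\right)}$ with $\tfrac{2}{p}=\tfrac{1}{r}+\tfrac{1}{q}$, i.e.\ the case $j=1$, $m=2$, $\theta=\tfrac12$, which bounds a derivative by a \emph{higher} derivative and the function itself. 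This is a Landau--Kolmogorov type estimate and does not follow from Sobolev plus H\"older; it needs a separate argument --- in the classical proofs either Nirenberg's one-dimensional lemma applied along coordinate lines and then integrated, or an integration by parts of the form $\int |\nabla v|^{p}\,dx \leq C\int |v|\,|D^2 v|\,|\nabla v|^{p-2}\,dx$ followed by H\"older and absorption of the $|\nabla v|^{p-2}$ factor. Without supplying this ingredient the iteration cannot start, so the proposal as written is incomplete at its central step. The endpoint difficulties you flag ($r=1$, $r=\infty$, $\theta=j/m$) are real but secondary to this. (Incidentally, the theorem as stated in the paper silently omits the standard exceptional cases of Nirenberg's theorem, e.g.\ the failure of $\theta=1$ when $1<r<\infty$ and $m-j-\tfrac{n}{r}$ is a non-negative integer; that is an issue with the statement rather than with your argument.)
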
The article will extensively use the embedding results and corresponding inequalities. Now, we need to deal with spaces involving time for the parabolic equations, so we introduce them here. As in the classical case, we define the corresponding Bochner spaces as the following
\begin{equation*}
    \begin{array}{c}
L^q(0, T ; W_0^{1, q}(\Omega)) =\left\{u \in L^q(\Omega \times(0, T)),\|u\|_{L^q(0, T ; W_0^{1, q}(\Omega))}<\infty\right\}, \smallskip\\
L^q(0, T ; W_0^{s, q}(\Omega)) =\left\{u \in L^q(\Omega \times(0, T)),\|u\|_{L^q(0, T ; W_0^{s, q}(\Omega))}<\infty\right\}, \smallskip\\
L^2(0, T ; X_0^s(\Omega)) =\left\{u \in L^2(\mathbb{R}^n \times(0, T)),\|u\|_{L^2(0, T ; X^s_0(\Omega))}<\infty\right\},
    \end{array}
\end{equation*}
where
\begin{equation*}
\begin{array}{c}
\|u\|_{L^q(0, T ; W_0^{1, q}(\Omega))}  =\left(\int_0^T \int_{\Omega} |\nabla u|^q d x d t\right)^{\frac{1}{q}}, \smallskip\\
\|u\|_{L^q(0, T ; W_0^{s, q}(\Omega))}  =\left(\int_0^T \int_{\Omega} \int_{\Omega} \frac{|u(x, t)-u(y, t)|^q}{|x-y|^{n+q s}} d x d y d t\right)^{\frac{1}{q}}, \smallskip\\
\|u\|_{L^2(0, T ; X_0^s(\Omega))}  =\left(\int_0^T \int_Q \frac{|u(x, t)-u(y, t)|^2}{|x-y|^{n+2 s}} d x d y d t\right)^{\frac{1}{2}},
\end{array}
\end{equation*}
with their dual spaces $L^{q^{\prime}}(0, T ; W^{-1, q^{\prime}}(\Omega))$, $L^{q^{\prime}}(0, T ; W^{-s, q^{\prime}}(\Omega))$ and $L^2(0, T ; X^{-s}(\Omega))$ respectively. Again, the local spaces are defined as
\begin{equation*}
    L^2(0, T ; H_{l o c}^1(\Omega))=\left\{u \in L^2(K \times(0, T)) :\int_0^T \int_K |\nabla u|^q d x d t<\infty,
\mbox { for every }K \subset \subset \Omega\right\},
\end{equation*}and\begin{equation*}
    L^2(0, T ; H_{l o c}^s(\Omega))=\left\{u \in L^2(K \times(0, T)) :\int_0^T \int_K \int_K \frac{|u(x, t)-u(y, t)|^2}{|x-y|^{n+2 s}} d x d y d t<\infty,
\mbox { for every }K \subset \subset \Omega\right\}.
\end{equation*}
We now recall the following algebraic inequality that can be found in [\citealp{abdellaoui1}, Lemma 2.22].
\begin{lemma}{\label{algebraic}}
\begin{enumerate}[label=\roman*)]
    \item Let $\alpha>0$. For every $x, y \geq 0$ one has
\begin{equation*}
    (x-y)(x^\alpha-y^\alpha) \geq \frac{4 \alpha}{(\alpha+1)^2}\left(x^{\frac{\alpha+1}{2}}-y^{\frac{\alpha+1}{2}}\right)^2.
\end{equation*}
\item Let $0<\alpha \leq 1$. For every $x, y \geq 0$ with $x \neq y$ one has
\begin{equation*}
    \frac{x-y}{x^\alpha-y^\alpha} \leq \frac{1}{\alpha}(x^{1-\alpha}+y^{1-\alpha}).
\end{equation*}
\item Let $\alpha \geq 1$. Then there exists a constant $C_\alpha$ depending only on $\alpha$ such that
\begin{equation*}
|x+y|^{\alpha-1}|x-y| \leq C_\alpha\left|x^\alpha-y^\alpha\right| .   
\end{equation*}    
\end{enumerate}
\end{lemma}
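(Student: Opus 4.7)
All three inequalities reduce to the case $x>y\geq 0$ (the case $x=y$ being trivial by both sides vanishing) and then follow from the integral representation
\[
x^{\beta}-y^{\beta}=\beta\int_{y}^{x}s^{\beta-1}\,ds,\qquad \beta>0,
\]
combined with Cauchy--Schwarz or simple monotonicity estimates on the integrand.

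For part (i), I would apply this identity with $\beta=(\alpha+1)/2$:
\[
x^{(\alpha+1)/2}-y^{(\alpha+1)/2}=\frac{\alpha+1}{2}\int_{y}^{x}s^{(\alpha-1)/2}\,ds.
\]
Squaring and using the Cauchy--Schwarz inequality in the form $\bigl(\int_{y}^{x} 1\cdot s^{(\alpha-1)/2}\,ds\bigr)^{2}\leq (x-y)\int_{y}^{x}s^{\alpha-1}\,ds$ yields
\[
\bigl(x^{(\alpha+1)/2}-y^{(\alpha+1)/2}\bigr)^{2}\leq (x-y)\cdot\frac{(\alpha+1)^{2}}{4}\cdot\frac{x^{\alpha}-y^{\alpha}}{\alpha}=\frac{(\alpha+1)^{2}}{4\alpha}(x-y)(x^{\alpha}-y^{\alpha}),
\]
which is exactly the claimed bound after rearranging.

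For part (ii), with $0<\alpha\leq 1$ the integrand $s^{\alpha-1}$ is non-increasing on $[y,x]$, so $s^{\alpha-1}\geq x^{\alpha-1}$ on that interval. Therefore
\[
x^{\alpha}-y^{\alpha}=\alpha\int_{y}^{x}s^{\alpha-1}\,ds\geq \alpha(x-y)\,x^{\alpha-1},
\]
which rearranges to $\frac{x-y}{x^{\alpha}-y^{\alpha}}\leq \frac{1}{\alpha}x^{1-\alpha}\leq \frac{1}{\alpha}(x^{1-\alpha}+y^{1-\alpha})$. (The boundary case $y=0$ is immediate: $\frac{x}{x^{\alpha}}=x^{1-\alpha}\leq \frac{1}{\alpha}x^{1-\alpha}$ since $\alpha\leq 1$.)

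For part (iii), I use that $x+y\leq 2x$ so $(x+y)^{\alpha-1}\leq 2^{\alpha-1}x^{\alpha-1}$ for $\alpha\geq 1$. On the other hand, since $\alpha-1\geq 0$ and $y\leq x$, we have $y^{\alpha}=y\cdot y^{\alpha-1}\leq y\cdot x^{\alpha-1}$, hence
\[
x^{\alpha}-y^{\alpha}\geq x^{\alpha}-y\,x^{\alpha-1}=x^{\alpha-1}(x-y).
\]
Combining the two estimates yields $(x+y)^{\alpha-1}|x-y|\leq 2^{\alpha-1}|x^{\alpha}-y^{\alpha}|$, so one may take $C_{\alpha}=2^{\alpha-1}$.

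None of the steps presents a real obstacle; the only care needed is keeping track of the direction of monotonicity of $s\mapsto s^{\alpha-1}$ (decreasing in (ii), increasing in (iii)) and setting up the Cauchy--Schwarz cleanly in (i) so that the constant $\frac{4\alpha}{(\alpha+1)^{2}}$ drops out correctly.
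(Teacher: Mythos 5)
Your proof is correct in all three parts: the Cauchy--Schwarz argument on the integral representation gives exactly the constant $\tfrac{4\alpha}{(\alpha+1)^2}$ in (i), the monotonicity of $s\mapsto s^{\alpha-1}$ handles (ii) and (iii) cleanly (including the improper-integral endpoint $y=0$ when $\alpha<1$, which converges since $\alpha-1>-1$), and $C_\alpha=2^{\alpha-1}$ works in (iii). Note that the paper itself offers no proof of this lemma --- it simply recalls it from [\citealp{abdellaoui1}, Lemma 2.22] --- so your argument is a valid self-contained substitute rather than a variant of anything in the text.
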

\subsection{\textbf{Weak Solutions}}
In this subsection, along with the next subsection, we will introduce notions of very weak solutions to our problem and state the main results that we are going to prove. We begin with the definitions of weak solutions for the nonsingular case. We first take $f$ and $u_0$ to be in $L^2$ spaces and then relax the condition. We also state some important properties of the weak solutions that we need to use in the rest of the article. Further, we will introduce suitable approximating problems and properties of their solutions.
\begin{definition}{\label{solution1}} Assume $(f, u_0) \in L^2(\Omega_T) \times L^2(\Omega)$, then we say that $u$ is an energy solution to problem
\begin{equation}{\label{problem1}}
\begin{array}{lcr}
\begin{cases}
u_t-\Delta u+(-\Delta)^su={f(x,t)} 
 & \mbox{ in } \Omega_T
 ,\\
u=0  & \mbox{ in } (\mathbb{R}^n\backslash \Omega) \times (0,T),\\
u(x,0)=u_0(x) &  \mbox{ in } \Omega;
\end{cases}
\end{array}
\end{equation}
if $u \in 
L^2(0, T;H_0^1(\Omega)) \cap \mathcal{C}([0, T], L^2(\Omega)), u_t \in 
L^2(0, T ; H^{-1}(\Omega))$, and for all $\phi \in 
L^2(0, T;H_0^1(\Omega))$ we have
\begin{equation*}
    \int_0^T\left\langle u_t, \phi\right\rangle d t+\int_0^T\int_{\Omega}\nabla u\cdot \nabla \phi d x d t+\frac{1}{2} \int_0^T \int_{\mathbb{R}^n}\int_{\mathbb{R}^n} \frac{(u(x, t)-u(y, t))(\phi(x, t)-\phi(y, t))}{|x-y|^{n+2 s}} d x d y d t=\int_0^T\int_{\Omega}f\phi d x  d t
\end{equation*}
and $u(\cdot, t) \rightarrow u_0$ strongly in $L^2(\Omega)$, as $t \rightarrow 0$.
\end{definition}
We denote
\begin{equation*}
    \mathcal{E}(u(x,t), \phi(x,t)):=\frac{1}{2} \int_{\mathbb{R}^n} \int_{\mathbb{R}^n}(u(x, t)-u(y, t))(\phi(x, t)-\phi(y, t))\times K(x, y, t) d x d y,
\end{equation*}
where $K(x,y,t)=\frac{1}{{{\left|x-y\right|}^{n+2s}}}.$ Following the way for fractional Laplacian in \cite{Peral}, we give the proof of existence for mixed local-nonlocal case for the sake of completeness.
\begin{theorem}{\label{exist1}}
There exists a solution to problem \cref{problem1} in the sense of \cref{solution1}. Moreover, if $f$ is also a nonnegative function and $u_0\geq 0$, then the solution is also nonnegative.
\end{theorem}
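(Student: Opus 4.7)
My plan is to use the Faedo--Galerkin method, adapting the argument used for the purely nonlocal case in \cite{Peral}. Let $\{w_j\}_{j=1}^\infty$ be an $L^2(\Omega)$-orthonormal basis of eigenfunctions of the Dirichlet Laplacian on $\Omega$; these functions lie in $H_0^1(\Omega)$, and extended by zero outside $\Omega$ they belong to $X_0^s(\Omega)$ by \cref{embedding2}. I would construct approximating solutions of the form $u_m(x,t) = \sum_{j=1}^m c_j^m(t) w_j(x)$ by requiring the Galerkin projection of \cref{problem1} to hold pointwise in $t$, with initial data given by the $L^2(\Omega)$-projection of $u_0$ onto $\operatorname{span}\{w_1,\dots,w_m\}$. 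Because the resulting system for the coefficients $c_j^m(t)$ is linear with constant coefficient matrix and forcing in $L^2(0,T)$, a unique solution exists on $[0,T]$.

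The second step is to derive uniform estimates. Multiplying the $j$-th equation by $c_j^m(t)$ and summing gives the energy identity
\begin{equation*}
\tfrac{1}{2}\tfrac{d}{dt}\|u_m(\cdot,t)\|_{L^2(\Omega)}^2 + \|\nabla u_m(\cdot,t)\|_{L^2(\Omega)}^2 + \mathcal{E}(u_m,u_m) = \int_\Omega f\, u_m\, dx,
\end{equation*}
from which Young's and Poincar\'e's inequalities (\cref{p}) combined with Gronwall yield uniform bounds on $\{u_m\}$ in $L^\infty(0,T;L^2(\Omega)) \cap L^2(0,T;H_0^1(\Omega)) \cap L^2(0,T;X_0^s(\Omega))$. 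Using the equation itself, I would bound $(u_m)_t$ in $L^2(0,T;H^{-1}(\Omega))$. The Aubin--Lions lemma (via the compact embedding $H_0^1(\Omega) \hookrightarrow\hookrightarrow L^2(\Omega)$) then provides a subsequence converging strongly in $L^2(\Omega_T)$ and weakly in the energy space to a limit $u$. Since the problem is linear, passing to the limit in the weak formulation against test functions $\phi = \sum_{j=1}^N \alpha_j(t) w_j(x)$ with $\alpha_j \in \mathcal{C}^1([0,T])$ is immediate from weak convergence; a density argument then extends it to all admissible $\phi \in L^2(0,T;H_0^1(\Omega))$. Continuity $u \in \mathcal{C}([0,T];L^2(\Omega))$ and attainment of the initial datum follow from the standard trace theorem for functions in $L^2(0,T;H_0^1(\Omega))$ with time-derivative in $L^2(0,T;H^{-1}(\Omega))$, combined with the strong convergence $u_m(\cdot,0) \to u_0$ in $L^2(\Omega)$.

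For the nonnegativity assertion under $f \geq 0$ and $u_0 \geq 0$, I would test the weak formulation with $-u^- \in L^2(0,T;H_0^1(\Omega))$, which is admissible since the positive and negative parts of $H_0^1$-functions are again $H_0^1$. The time-derivative and local terms combine to $\tfrac{1}{2}\tfrac{d}{dt}\|u^-(\cdot,t)\|_{L^2(\Omega)}^2 + \|\nabla u^-(\cdot,t)\|_{L^2(\Omega)}^2$, while the right-hand side $-\int_\Omega f u^-\, dx \le 0$ by assumption. The main obstacle is showing that $\mathcal{E}(u,-u^-) \ge 0$; I would verify this by splitting $\mathbb{R}^n \times \mathbb{R}^n$ into the four regions determined by the signs of $u(x,t)$ and $u(y,t)$ and checking that the integrand $(u(x,t)-u(y,t))(u^-(y,t)-u^-(x,t))$ is pointwise nonnegative on each (on the mixed-sign regions one obtains a combination of the form $-u(x,t)u(y,t)$ plus a square term, clearly nonnegative). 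Since $u^-(\cdot,0) \equiv 0$, Gronwall then forces $u^- \equiv 0$, which gives $u \ge 0$.
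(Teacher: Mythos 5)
Your proposal is correct, but it follows a genuinely different route from the paper. The paper (following the treatment of the fractional case in \cite{Peral}) builds a Hilbert space $H^*(\Omega\times[0,T])$ from $\mathcal{C}_*^\infty$ functions with the norm induced by \cref{innerprod}, applies the Fr\'echet--Riesz theorem twice (once to get an injective operator $\mathcal{T}$ with $\|\phi\|_*\le\|\mathcal{T}\phi\|_*$, once to represent the data functional $B_{u_0,f}\circ\mathcal{T}^{-1}$), and only afterwards recovers the energy formulation and the regularity $u_t\in L^2(0,T;H^{-1}(\Omega))$ by a density/integration-by-parts argument. Your Faedo--Galerkin scheme is more constructive: the a priori bounds in $L^\infty(0,T;L^2(\Omega))\cap L^2(0,T;H_0^1(\Omega))\cap L^2(0,T;X_0^s(\Omega))$ and the bound on $(u_m)_t$ come out explicitly, and the attainment of the initial datum is transparent; the price is the compactness machinery (Aubin--Lions), which for this linear problem is actually dispensable since weak convergence suffices to pass to the limit, as you note. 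One point deserving a sentence in a full write-up: to bound $(u_m)_t$ in $L^2(0,T;H^{-1}(\Omega))$ you must project a test function $v\in H_0^1(\Omega)$ onto $\operatorname{span}\{w_1,\dots,w_m\}$, and the uniform boundedness of that projection in $H_0^1$ is exactly what your choice of Laplacian eigenfunctions guarantees (the projection is then orthogonal in $H_0^1$ as well); the nonlocal term is controlled there via \cref{embedding2}. Your nonnegativity argument is essentially the paper's: testing with $\mp u^-$ truncated in time, using $\nabla u^+\cdot\nabla u^-=0$ and the pointwise sign of $(u^+(x)-u^+(y))(u^-(x)-u^-(y))$ to handle the bilinear form, and concluding from $u^-(\cdot,0)=0$; the sign verification you sketch for $\mathcal{E}(u,-u^-)\ge 0$ is exactly the decomposition the paper performs over $\mathbb{R}^{2n}\setminus(\Omega^c\times\Omega^c)$.
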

\begin{proof}
Let us denote $\mathcal{C}_*^{\infty}(\Omega \times[0, T])$ as the $\mathcal{C}^{\infty}(\Omega \times[0, T])$ functions that vanish in $(\mathbb{R}^n \backslash\Omega) \times[0, T]$ and in $\Omega \times\{T\}$. Choosing $\phi \in \mathcal{C}_*^{\infty}(\Omega \times[0, T])$, for $ u \in 
    L^2(0, T ; H_0^1(\Omega))$, we define the operator
\begin{equation*}
    L_\phi(u):=\int_0^T \int_{\Omega}-u \phi_t d x d t+\int_0^T\int_{\Omega}\nabla u\cdot \nabla \phi d x d t+\int_0^T \mathcal{E}(u(x, t), \phi(x, t)) d t .
\end{equation*}
We now observe that $u$ is an energy solution to \cref{problem1} with $f \in 
L^2(\Omega_T)\subset L^2(0, T ; H^{-1}(\Omega))$ if and only if
\begin{equation*}
    L_\phi(u)=\int_0^T\langle f, \phi\rangle d t+\int_{\Omega} u(x, 0) \phi(x, 0) d x ,
\end{equation*}
where $\langle f, \phi \rangle$ denote the usual inner product of $f$ and $\phi$ in $L^2(\Omega)$ or the pairing of $f$ and $\phi$ between $H^{-1}(\Omega)$ and $H^{1}_0(\Omega)$.\\
We also define the following inner product, for $\phi,\varphi\in \mathcal{C}_*^{\infty}(\Omega \times[0, T])$
\begin{equation}{\label{innerprod}}
\langle\varphi, \phi\rangle_*=\frac{1}{2}\langle\varphi(x, 0), \phi(x, 0)\rangle_{L^2(\Omega)}+\int_0^T\int_{\Omega}\nabla \phi\cdot \nabla \varphi~ d x d t+\int_0^T \mathcal{E}(\varphi(x, t), \phi(x, t)) d t,   
\end{equation}
and denote by $H^*(\Omega \times[0, T])$ the Hilbert space built as the completion of $\mathcal{C}_*^{\infty}(\Omega \times[0, T])$ with the norm $\|\phi\|_*$ induced by the inner product \cref{innerprod}.\\
Now  $L_\phi$ is clearly a linear functional in $
H^*(\Omega \times[0, T])
$ and for any $\varphi \in 
L^2(0, T ; H_0^1(\Omega))$, by Hölder and Sobolev inequalities, we have
\begin{equation*}
\left|L_\phi(\varphi)\right| \leq c_\phi\left(\|\varphi\|_{ L^2(0,T;L^2(\Omega))}+\|\varphi\|_{L^2(0, T, H_0^1(\Omega))}+\|\varphi\|_{L^2(0, T, H_0^s(\Omega))}\right) \leq \tilde{c}_\phi\|\varphi\|_*.
\end{equation*}
Therefore, $L_\phi$ is a bounded linear functional in $H^*(\Omega \times[0, T])$, and hence by the Fréchet-Riesz Theorem, there exists $\mathcal{T} \phi \in H^*(\Omega \times[0, T])$ such that
\begin{equation*}
L_\phi(\varphi)=\langle\varphi, \mathcal{T} \phi\rangle_* \quad \text { for all } \varphi \in H^*(\Omega \times[0, T]) .
\end{equation*}
It is trivial to show that $\mathcal{T}$ is a linear operator in $H^*(\Omega \times[0, T])$. Moreover
\begin{equation*}
    L_\phi(\phi)=\frac{1}{2} \int_{\Omega} \phi^2(x, 0) d x+\int_0^T\int_{\Omega}|\nabla \phi|^2 d x d t+\int_0^T \mathcal{E}(\phi(x, t), \phi(x, t)) d t=\|\phi\|_*^2,
\end{equation*}
and consequently, $\langle\phi, \mathcal{T} \phi\rangle_*=\|\phi\|_*^2$. Then we get by the Cauchy-Schwartz inequality,
\begin{equation*}
    \|\phi\|_*^2 \leq\|\phi\|_*\|\mathcal{T} \phi\|_*, \quad \text { i.e., } \quad\|\phi\|_* \leq\|\mathcal{T} \phi\|_* .
\end{equation*}
Therefore, this implies that $\mathcal{T}$ is injective and hence bijective on its range, and its inverse $\mathcal{T}^{-1}$ has a norm less than or equal to $1$ and can be extended to the $\operatorname{closure} M$ of $\operatorname{Range}(\mathcal{T})$.\smallskip\\
Now, on the other hand, we define
\begin{equation*}
    B_{u_0, f}(\phi):=\int_{\Omega} u_0 \phi(x, 0) d x+\int_0^T \int_{\Omega} \phi f d x d t .
\end{equation*}
Denoting $\phi_0:=\phi(x, 0)$, we get by Hölder inequality 
\begin{equation*}
    \left|B_{u_0, f}(\phi)\right| \leq\left\|u_0\right\|_{L^2(\Omega)}\left\|\phi_0\right\|_{L^2(\Omega)}+\|f\|_{L^2(0, T ; L^2(\Omega))}\|\phi\|_{L^2(0, T ; L^2(\Omega))} \leq c_{u_0, f}\|\phi\|_*,
\end{equation*}
and thus,
\begin{equation*}
    \left|B_{u_0, f}(\mathcal{T}^{-1} \psi)\right| \leq c\left\|\mathcal{T}^{-1} \psi\right\|_* \leq c\|\psi\|_*.
\end{equation*}
Since $B_{u_0,f}$ and $\mathcal{T}^{-1}$ both are linear, therefore their composition is also so, and by above line, $B_{u_0, f}\circ\mathcal{T}^{-1}$ is bounded too, therefore, by applying the Fréchet-Riesz Theorem again, there exists a unique $u \in M$ such that $B_{u_0, f}(\mathcal{T}^{-1} \psi)=\langle\psi, u\rangle_*$ for every $\psi \in M$. We denote $\phi=\mathcal{T}^{-1} \psi$ and so
\begin{equation*}
B_{u_0, f}(\phi)=\langle\mathcal{T} \phi, u\rangle_*=L_\phi(u)    
\end{equation*}
that is,
\begin{equation*}
\begin{array}{c}
\int_0^T \int_{\Omega}-u \phi_t d x d t+\int_0^T\int_{\Omega}\nabla u\cdot \nabla \phi d x d t+\int_0^T \mathcal{E}(u(x, t), \phi(x, t)) d t=\int_0^T \int_{\Omega} f \phi d x d t+\int_{\Omega} u(x, 0) \phi(x, 0) d x,    
\end{array}
\end{equation*}
where $\phi \in L^2(0, T ; H_0^s(\Omega))\cap L^2(0, T ; H_0^1(\Omega))\equiv L^2(0, T ; H_0^1(\Omega))$ and $\phi_t \in
L^2(0, T ; H^{-1}(\Omega))$. Finally, by a density argument, one can conclude, integrating by parts, that $u \in 
L^2(0, T ; H_0^1(\Omega))$, $u_t \in 
L^2(0, T ; H^{-1}(\Omega))$, and
\begin{equation*}
    \int_0^T \int_{\Omega} u_t \phi d x d t+\int_0^T\int_{\Omega}\nabla u\cdot \nabla \phi d x d t+\int_0^T \mathcal{E}(u(x, t), \phi(x, t)) d t=\int_0^T \int_{\Omega} f \phi d x d t .
\end{equation*}
Since $u \in 
L^2(0, T ; H_0^1(\Omega))$, $u_t \in 
L^2(0, T ; H^{-1}(\Omega))$ implies that $u\in \mathcal{C}([0, T], L^2(\Omega))$ and so we have $u(\cdot, t) \rightarrow u_0$ strongly in $L^2(\Omega)$, as $t \rightarrow 0$. Thus $u(x, t)$ is an energy solution of \cref{problem1}.\smallskip\\
Now we show that $u\geq 0$ 
provided that $f$ and $u_0$ are nonnegative. 
We write $u=u_+-u_-$, where $u_+=\operatorname{max}{\{u,0}\}\chi_\Omega$ and $u_-=\operatorname{max}{\{-u,0}\}\chi_\Omega$. We take $\phi=u_-\chi_{(0,\tilde{t})}, \tilde{t}>0$, as a test function. 
Since $f\geq 0$, and $\phi \geq 0$, we have
\begin{equation}{\label{contradiction1}}
0\leq \int_0^{T}\int_{\Omega}f\phi d x d t=\int_0^{\tilde{t}}\int_{\Omega}fu_{-} d x d t.
\end{equation}
On the other hand, since $u$ is $0$ in $(\mathbb{R}^n\backslash \Omega)\times (0,T)$, we have that
\begin{equation*}
    \begin{array}{c}
\int_0^{T}\iint_{\mathbb{R}^{2 n} }(u(x,t)-u(y,t))(\phi(x,t)-\phi(y,t)) K(x,y,t) d x d y d t\smallskip\\ =\int_0^{\tilde{t}}\iint_{\mathbb{R}^{2 n} \backslash(\Omega^c \times \Omega^c)}(u(x,t)-u(y,t))(u_{-}(x,t)-u_{-}(y,t)) K(x,y,t) d x d y d t \smallskip\\
=\int_0^{\tilde{t}} \int_{\Omega} \int_{\Omega}(u(x,t)-u(y,t))(u_{-}(x,t)-u_{-}(y,t)) K(x,y,t) d x d y d t +2 \int_0^{\tilde{t}}\int_{\Omega} \int_{\Omega^c}u(x,t) u_{-}(x,t) K(x,y,t) d y d x d t .
    \end{array}
\end{equation*}
\smallskip Moreover, $(u_{+}(x,t)-u_{+}(y,t))(u_{-}(x,t)-u_{-}(y,t)) \leq 0$, and thus 
\begin{equation*}
\begin{array}{l}
\int_0^{\tilde{t}}\int_{\Omega} \int_{\Omega}(u(x,t)-u(y,t))(u_{-}(x,t)-u_{-}(y,t)) K(x,y,t) d x d y d t \smallskip\\
\leq-\int_0^{\tilde{t}}\int_{\Omega} \int_{\Omega}(u_{-}(x,t)-u_{-}(y,t))^2 K(x,y,t) d x d y d t\leq 0 .
\end{array}
\end{equation*}
Further, we have
\begin{equation*}
\int_0^{\tilde{t}}\int_{\Omega}\int_{\Omega^c}u(x,t) u_{-}(x,t) K(x,y,t) d y d x d t =-\int_0^{\tilde{t}}\int_{\Omega}\int_{\Omega^c}u_{-}^2(x,t) K(x,y,t) d y d x d t \leq 0 .    
\end{equation*}
Therefore, we have shown that
\begin{equation*}
    \int_0^T\iint_{\mathbb{R}^{2 n} }(u(x,t)-u(y,t))(\phi(x,t)-\phi(y,t)) K(x,y,t) d x d y d t\leq 0,
\end{equation*}
Similarly since $\nabla u_+\cdot\nabla u_-= 0$, we get
\begin{equation*}
    \int_0^T\int_{\Omega}\nabla u\cdot \nabla \phi d x d t=\int_0^{\tilde{t}}\int_{\Omega}\nabla u_+\cdot \nabla u_- d x d t-\int_0^{\tilde{t}}\int_{\Omega}| \nabla u_-|^2 d x d t\leq 0.
\end{equation*}
Now 
since $u_0\geq 0$, so $u_-(\cdot,0)\equiv 0$, and we get
\begin{equation*}
    \begin{array}{rcl}
         \int_0^T\left\langle u_t, \phi\right\rangle d t
         &=&\int_0^{\tilde{t}}\int_\Omega  \frac{\partial u}{\partial t} u_- d x d t
        = \int_\Omega\int_{u(x,0)}^{u(x,\tilde{t})}\sigma_-d\sigma d x 
         =-\frac{1}{2}\int_\Omega (u_-(x,{\tilde{t}}))^2d x.
    \end{array}
\end{equation*}
Combining the above three inequalities, we get from \cref{contradiction1} that 
\begin{equation*}
\begin{array}{rcl}
0\leq    \int_0^T\int_{\Omega}f\phi d x d t&=& \int_0^T\left\langle u_t, \phi\right\rangle d t+\int_0^T\int_{\Omega}\nabla u\cdot \nabla \phi d x d t\smallskip\\&&+\frac{1}{2} \int_0^T \iint_{\mathbb{R}^{2n}}\frac{(u(x, t)-u(y, t))(\phi(x, t)-\phi(y, t))}{|x-y|^{n+2 s}} d x d y d t\leq-\frac{1}{2}\int_\Omega (u_-(x,{\tilde{t}}))^2d x \leq 0,
    \end{array}
\end{equation*}
and this gives that $||u_{-}(\cdot,\tilde{t})||_{L^2(\Omega)}=0$ 
for each $\tilde{t}>0$. 
Therefore $u_{-}\equiv 0$. So we conclude that $u\geq 0$. We observe that this comparison result also guarantees the uniqueness of energy solution to \cref{problem1}.
\end{proof}
\begin{remark}{\label{bounded}}
    Observing that $(u(x,t)-u(y,t))((u(x,t)-k)_+-(u(y,t)-k)_+)\geq 0$, for each $k$, we can show that for $(f,u_0)\in L^\infty(\Omega_T)\times L^\infty(\Omega)$, the weak solution $u\in L^\infty(\Omega_T)$. The proof will follow exactly similar to that of [\citealp{bdd}, Theorem 4.2.1].
\end{remark}
Now we relax the spaces where $f$ and $u_0$
lie. For the case of $L^1$ data, we 
consider the set
\begin{equation*}
    \begin{array}{rl}
         \mathcal{T}:=&\{\phi: \Omega \times[0, T] \rightarrow \mathbb{R}, \text { s.t. }-\phi_t-\Delta \phi+(-\Delta)^s \phi=\varphi \text{ in } \Omega_T, \\
&\varphi \in \mathcal{C}_0^\infty(\Omega_T) ,\phi \in L^{\infty}(\Omega \times(0, T)) \cap \mathcal{C}_{\operatorname{loc}}^{\alpha, \beta}(\Omega \times(0, T)),\\& \phi(x,0)\in L^\infty(\Omega),  \phi=0 \text { in }(\mathbb{R}^n \backslash \Omega) \times(0, T], \phi(x, T)=0 \text { in } \Omega\},
    \end{array}
\end{equation*}
where $\alpha, \beta \in(0,1)$.
\begin{definition}{\label{solution2}}
    Let $(f,u_0)\in L^1(\Omega_T)\times L^1(\Omega) $ be nonnegative functions. Then $u \in L^1(\Omega_T)
    $ is a very weak solution to \cref{problem1} if we have 
\begin{equation*}
    \iint_{\Omega_T} u\left(-\phi_t-\Delta \phi+(-\Delta)^s \phi\right) d x d t =\iint_{\Omega_T} f \phi d x d t+\int_{\Omega} u_0(x) \phi(x, 0) d x, \quad \forall \phi\in\mathcal{T}.
\end{equation*}
\end{definition}
The next existence result is following the lines of \cite{Peral}.
\begin{theorem}{\label{existence2}}
    For $(f, u_0) \in L^1(\Omega_T) \times L^1(\Omega)$ being nonnegative, \cref{problem1} has a unique nonnegative very weak solution $u$ in the sense of \cref{solution2}. 
\end{theorem}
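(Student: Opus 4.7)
The plan is to approximate the $L^1$ data by bounded truncations, solve the resulting problems using \cref{exist1}, and pass to the limit in the very weak identity by exploiting the duality structure built into the test class $\mathcal{T}$. Specifically, I would set $f_k := T_k(f)\in L^\infty(\Omega_T)$ and $u_{0,k} := T_k(u_0)\in L^\infty(\Omega)$; both are nonnegative and increase monotonically to $f$ and $u_0$ in $L^1$. By \cref{exist1} and \cref{bounded}, each approximating problem admits a nonnegative bounded energy solution $u_k$. A comparison argument (subtract the equations for $u_{k+1}$ and $u_k$ and test with $-(u_{k+1}-u_k)_-$, following the last step of the proof of \cref{exist1}) shows that $\{u_k\}$ is non-decreasing in $k$.

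The next step is a uniform $L^1$ bound. Using the standard parabolic $L^1$ technique (test with a smooth approximation of the sign function, or with $T_\varepsilon(u_k)/\varepsilon$, and exploit the nonnegativity of both $\int |\nabla T_\varepsilon(u_k)|^2$ and $\mathcal{E}(u_k, T_\varepsilon(u_k))$), one obtains
\begin{equation*}
\sup_{t\in(0,T)}\|u_k(\cdot,t)\|_{L^1(\Omega)} \leq \|u_0\|_{L^1(\Omega)} + \|f\|_{L^1(\Omega_T)},
\end{equation*}
uniformly in $k$. Combined with monotonicity, monotone convergence yields $u_k \uparrow u$ a.e.\ and in $L^1(\Omega_T)$, with $u\geq 0$ and $u\in L^1(\Omega_T)$.

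To recover the very weak identity, fix $\phi \in \mathcal{T}$ associated with datum $\varphi \in \mathcal{C}_0^\infty(\Omega_T)$. Because $\phi \in L^\infty \cap \mathcal{C}_{\operatorname{loc}}^{\alpha,\beta}$, $\phi(\cdot,0)\in L^\infty(\Omega)$, and $u_k$ is an energy solution, a density/integration-by-parts argument (move the spatial operators onto $\phi$ and integrate the time derivative by parts, using the vanishing of $\phi$ on the lateral and terminal boundaries) gives
\begin{equation*}
\iint_{\Omega_T} u_k \varphi \, dx\, dt = \iint_{\Omega_T} f_k \phi \, dx\, dt + \int_\Omega u_{0,k}(x) \phi(x,0)\, dx.
\end{equation*}
Passing $k\to\infty$ via dominated convergence on the left ($u_k \leq u\in L^1$, $\varphi\in L^\infty$) and monotone convergence on the right produces \cref{solution2}. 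Uniqueness then follows immediately: if $u^{(1)}, u^{(2)}$ are two very weak solutions, their difference $w \in L^1(\Omega_T)$ satisfies $\iint_{\Omega_T} w\, \varphi \, dx\, dt = 0$ for every $\varphi\in \mathcal{C}_0^\infty(\Omega_T)$, hence $w=0$ a.e.

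The main obstacle is showing that the class $\mathcal{T}$ is rich enough for the uniqueness step, i.e.\ that for every $\varphi \in \mathcal{C}_0^\infty(\Omega_T)$ there exists a genuine $\phi \in \mathcal{T}$ solving the adjoint (backward) problem with the required $L^\infty \cap \mathcal{C}_{\operatorname{loc}}^{\alpha,\beta}$ regularity. To handle this, I would reverse time by setting $\tilde\phi(x,t):=\phi(x,T-t)$, which turns the adjoint problem into a forward-in-time problem of the same type as \cref{problem1} with smooth bounded right-hand side $\tilde\varphi(x,t)=\varphi(x,T-t)$ and zero initial datum. Existence then follows from \cref{exist1}, boundedness from \cref{bounded}, and the interior Hölder regularity from the parabolic regularity theory for mixed local-nonlocal operators cited in the introduction; undoing the time change yields the desired $\phi \in \mathcal{T}$ and closes the argument.
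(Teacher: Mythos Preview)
Your proposal is correct and follows essentially the same approach as the paper: truncate the data, solve via \cref{exist1}, use comparison to get monotonicity, obtain a uniform $L^1$ bound, pass to the limit against test functions in $\mathcal{T}$, and deduce uniqueness by solving the backward adjoint problem. The only minor difference is that the paper tests with $T_k(u_m)\chi_{(0,t)}$ (which simultaneously yields the $L^1$ bound and the additional information $T_k(u)\in L^{\infty}(0,T;L^2)\cap L^2(0,T;H_0^1)$), whereas you use the sign-approximation route for the $L^1$ bound alone; both are standard and equivalent for the purposes of this theorem.
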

\begin{proof}
    Firstly, we observe that the existence of valid test functions is guaranteed by the result in \cite{paper1}, \cite{localb}, \cite{par2}. 
We will now obtain the solution as a limit of solutions to approximated problems. Let $u_m\in 
L^2(0, T ; H_0^1(\Omega)) \cap L^{\infty}(\Omega_T)$ be the solution (exists by \cref{exist1}) to the approximated problem
\begin{eqnarray*}
    \begin{cases}
    (u_m)_t-\Delta u_m+(-\Delta)^s u_m=f_m & \text { in } \Omega_T , \\ u_m(x, t)=0 & \text { in }(\mathbb{R}^n \backslash \Omega) \times(0, T), \\ u_m(x, 0)=u_{0m}(x) & \text { in } \Omega;
    \end{cases}
\end{eqnarray*}
where $f_m=T_m(f(x, t))$ and $u_{0 m}=T_m(u_0(x))$ 
are $L^\infty$ functions. Using $(T_k(u_m))\chi_{(0,t)}$, for $t>0$ (admissible by [\citealp{Peral}, Proposition 3]) as a test function in the approximated problem, it holds that, 
\begin{equation}{\label{test}}
    \begin{array}{l}
         \int_{\Omega} L_k(u_m(x, t)) d x +\int_0^t\int_\Omega \nabla u_m\cdot\nabla T_k(u_m)d x d \theta\smallskip\\
 +\frac{1}{2} \int_0^t \int_Q \frac{\left(T_k\left(u_m(x, \theta)\right)-T_k\left(u_m(y, \theta)\right)\right)\left(u_m(x, \theta)-u_m(y, \theta)\right)}{|x-y|^{n+2 s}} d x d y d \theta\smallskip \\
 \leq k\|f\|_{L^1(\Omega_T)}+C_3(k)\left\|u_0\right\|_{L^1(\Omega)}+C_4(k)|\Omega|,
    \end{array}
\end{equation}
where $
L_k(\rho)=\int_0^\rho\left(T_k(\xi)\right) d \xi.
$ Notice that 
\begin{equation*}
    \left(T_k\left(u_m(x, \theta)\right)-T_k\left(u_m(y, \theta)\right)\right)\left(u_m(x, \theta)-u_m(y, \theta)\right)\geq \left(T_k\left(u_m(x, \theta)\right)-T_k\left(u_m(y, \theta)\right)\right)^2
\end{equation*}
and so
\begin{equation*}
\nabla u_m\cdot\nabla T_k(u_m)\geq|\nabla T_k(u_m)|^2  
\end{equation*}
and for $\rho>0$ we have
\begin{equation*}
    C_1(k) \rho-C_2(k) \leq L_k(\rho) \leq C_3(k) \rho+C_4(k) 
    \end{equation*}and 
    \begin{equation*}
        L_k(\rho) \geq C\left(T_k(\rho)\right)^{2} ,
\end{equation*}
where $C_1, C_2, C_3, C_4$ are constants depending only on $k$ and independent of $m$. 
Therefore taking supremum over $t\in(0,T]$ in \cref{test}, we get that $\left\{T_k(u_m)\right\}_m$ is bounded in $L^{\infty}(0, T ; L^2(\Omega)) \cap L^2(0, T ; H_0^s(\Omega))\cap L^2(0, T ; H_0^1(\Omega))$ and $\left\{u_m\right\}_m$ is bounded in $L^{\infty}(0, T ; L^1(\Omega))\subset L^1(\Omega_T)$. \smallskip\\
Now since by comparison principle proved in \cref{exist1}, $\left\{u_m\right\}_m$ is increasing in $m$, we get the existence of a measurable function $u$ such that $T_k(u)\in L^{\infty}(0, T ; L^2(\Omega)) \cap L^2(0, T ; H_0^s(\Omega)) \cap L^2(0, T ; H_0^1(\Omega))$, $u_m\uparrow u$ strongly in $L^1(\Omega_T)$ and $u_m \uparrow u$ a.e in $\Omega_T$. As each $u_m$ is an energy solution, therefore, $u_m\in \mathcal{C}([0,T],L^2(\Omega))\subset\mathcal{C}([0,T],L^1(\Omega))$ and at each time level $t\in[0,T]$, we have $u_m(\cdot,t)\in L^1(\Omega)$, this along with the monotonicity of $\{u_m\}_m$ in $m$ allows us to define the pointwise limit (a.e.) $u$ of $\{u_m\}_m$ in $\Omega$ for each time $t\in[0,T]$. Also $u$ satisfies $u(\cdot,0)=u_0(\cdot)$ in $L^1$ sense. Again as each $u_m=0$ in $(\mathbb{R}^n\backslash\Omega)\times(0,T)$, therefore $u$ also satisfies the same. We now prove that $u$ is a weak solution to \cref{problem1} in the sense of \cref{solution2}. Let $\phi \in \mathcal{T}
$, then as $u_m$ is the energy solution to the approximated problem, we have
\begin{equation*}
\iint_{\Omega_T}\left((u_m)_t-\Delta u_m+(-\Delta)^s u_m\right) \phi d x d t=\iint_{\Omega_T} f_m\phi d x d t .
\end{equation*}
Using the fact that $u_m \rightarrow u$ strongly in $L^1(\Omega_T)$ and $u_m(x,0)=u_{m0}(x)\rightarrow u_0(x)=u(x,0)$ in $L^1(\Omega)$, we have
\begin{equation*}
    \begin{array}{rcl}
\iint_{\Omega_T}\left((u_m)_t-\Delta u_m+(-\Delta)^s u_m\right) \phi d x d t 
& =&\iint_{\Omega_T} u_m\left(-(\phi)_t-\Delta \phi+(-\Delta)^s \phi\right) d x d t-\int_\Omega u_m(x,0)\phi(x,0) d x \smallskip\\&=&\iint_{\Omega_T} u_m\varphi d x d t-\int_\Omega u_m(x,0)\phi(x,0) d x\smallskip\\&\rightarrow& \iint_{\Omega_T} u\varphi d x d t-\int_\Omega u(x,0)\phi(x,0) d x  \smallskip\\&=&\iint_{\Omega_T} u\left(-(\phi)_t-\Delta \phi+(-\Delta)^s \phi\right) d x d t-\int_\Omega u(x,0)\phi(x,0) d x.
    \end{array}
\end{equation*}
Notice that in the second last line, in order to pass the limit, we have used the facts that $\phi(x, 0)\in L^\infty(\Omega)$ and $\varphi\in \mathcal{C}_0^\infty(\Omega_T)$. Also, since $\phi \in L^\infty(\Omega\times(0,T))$, and $f_m\rightarrow f$ in $L^1(\Omega_T)$, we get 
\begin{equation*}
    \iint_{\Omega_T} f_m \phi d x d t \rightarrow \iint_{\Omega_T} f \phi d x d t \text { as } m \rightarrow \infty .
\end{equation*}
Thus
\begin{equation*}
    \iint_{\Omega_T} u\left(-(\phi)_t-\Delta \phi+(-\Delta)^s \phi\right) d x d t =\iint_{\Omega_T} f\phi d x d t+\int_\Omega u_0(x)\phi(x,0) d x,
\end{equation*}
and $u$ is a weak solution to \cref{problem1}.
    For the uniqueness let $w$ be a weak solution of \cref{problem1} with $(f,u_0)=(0,0)$, i.e.
\begin{eqnarray*}
\begin{cases}
w_t-\Delta w+(-\Delta)^s w=0 & \text { in } \Omega_T, \\ w=0 & \text { in }(\mathbb{R}^n \backslash \Omega) \times(0, T), \\ w(x, 0)=0 & \text { in } \Omega ;
\end{cases}
\end{eqnarray*}
we want to prove that $w \equiv 0$. For that we take $F \in \mathcal{C}_0^{\infty}(\Omega_T)$, and let $\phi_F$ be the solution of the backward problem
\begin{eqnarray*}
\begin{cases}
-(\phi_F)_t-\Delta \phi_F+(-\Delta)^s \phi_F=F & \text { in } \Omega_T, \\ \phi_F(x, t)=0 & \text { in }(\mathbb{R}^n \backslash \Omega) \times(0, T], \\ \phi_F(x, T)=0 & \text { in } \Omega .
\end{cases}
\end{eqnarray*}
Taking $\phi_F$ as a test function we deduce that for any $F \in \mathcal{C}_0^{\infty}(\Omega_T)$,
\begin{equation*}
    \int_0^T \int_{\Omega} w F d x d t=0,
\end{equation*}
that means, $w=0$ in $\mathcal{D}^{\prime}(\Omega_T)$.
\end{proof}
Next, we state some important facts regarding the solution of \cref{problem1}.  
\begin{proposition}{\label{Harnack}} Let $\left(f, w_0\right)$ are non-negative functions such that $\left(f, w_0\right) \in L^\infty(\Omega_T) \times L^\infty(\Omega)$. Assume that $w$ be the weak solution to the problem
\begin{eqnarray*}
\begin{cases}
w_t-\Delta w+(-\Delta)^s w=f & \text { in } \Omega_T, \\ w=0 & \text { in }(\mathbb{R}^n \backslash \Omega) \times(0, T), \\ w(x, 0)=w_0(x) & \text { in } \Omega ;
\end{cases}
\end{eqnarray*}
then for all $0<t_1<T$ 
and for each $\omega \subset \subset \Omega$ fixed, there exists $C:=C\left(t_1, n,s ,\omega\right)>0$ such that
\begin{equation*}
    w(x, t) \geq C\left(t_1, n, s, \omega\right) \text { in } \omega\times [t_1,T).
\end{equation*}
\end{proposition}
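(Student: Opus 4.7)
The plan is to combine a comparison argument with the first Dirichlet eigenpair of the stationary mixed operator $-\Delta+(-\Delta)^s$ on $\Omega$ and a parabolic strong maximum principle (or weak Harnack inequality) for mixed local-nonlocal equations, both of which are available from the regularity theory cited in the introduction (e.g.\ \cite{weakharnack,par2,localb}). Throughout, by \cref{exist1} and \cref{bounded}, $w\in L^\infty(\Omega_T)$ exists, is unique and is nonnegative in $\Omega_T$.

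Let $(\lambda_1,\phi_1)$ be the first Dirichlet eigenpair of $-\Delta\phi_1+(-\Delta)^s\phi_1=\lambda_1\phi_1$ in $\Omega$, $\phi_1=0$ in $\mathbb{R}^n\setminus\Omega$, normalized so that $\phi_1>0$ in $\Omega$ and $\|\phi_1\|_{L^\infty(\Omega)}=1$. Standard elliptic theory for the mixed operator (Hopf/strong maximum principle) yields, for every $\omega\subset\subset\Omega$, a constant $\delta_\omega>0$ such that $\phi_1\geq\delta_\omega$ on $\omega$. Set $\tau:=t_1/2$ and define the explicit subsolution
\begin{equation*}
v(x,t):=c\,e^{-\lambda_1(t-\tau)}\phi_1(x),\qquad (x,t)\in\mathbb{R}^n\times[\tau,T).
\end{equation*}
Then $v$ is an energy solution of $v_t-\Delta v+(-\Delta)^s v=0\leq f$ in $\Omega\times(\tau,T)$ with $v=0$ in $(\mathbb{R}^n\setminus\Omega)\times(\tau,T)$. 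If $c$ can be chosen so that $v(\cdot,\tau)=c\phi_1\leq w(\cdot,\tau)$ in $\Omega$, then applying the comparison principle from (the proof of) \cref{exist1} to $w-v$ on $\Omega\times[\tau,T)$ gives
\begin{equation*}
w(x,t)\geq c\,e^{-\lambda_1(t-\tau)}\phi_1(x)\geq c\,e^{-\lambda_1 T}\delta_\omega\qquad\text{on }\omega\times[t_1,T),
\end{equation*}
which is precisely the claim with $C(t_1,n,s,\omega):=c\,e^{-\lambda_1 T}\delta_\omega$.

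The main obstacle is therefore to produce the quantitative lower bound $w(\cdot,\tau)\geq c\phi_1$ on $\Omega$ (equivalently, a strictly positive constant lower bound on a compactly contained neighbourhood $\omega'\subset\subset\Omega$ of $\omega$). This is obtained from the parabolic strong maximum principle for the mixed local-nonlocal operator applied to the nonnegative supersolution $w$ of the homogeneous equation: since $w$ is nontrivial (because $f\geq 0$ or $w_0\geq 0$ is nontrivial in the context in which this proposition is used), for any $\omega'\subset\subset\Omega$ there exists $\mu=\mu(\tau,\omega',f,w_0)>0$ with $w(\cdot,\tau)\geq\mu$ on $\omega'$. Taking $\omega\subset\subset\omega'\subset\subset\Omega$ and $c:=\mu$, one then compares $w$ with $c\phi_1\chi_{\omega'}$ (extended by $0$), which is a subsolution on $\omega'$ whose boundary/exterior values are dominated by $w$, and runs the comparison argument described above.

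Alternatively, and more directly, one can invoke the parabolic weak Harnack inequality of \cite{weakharnack} on a chain of cylinders covering $\omega\times[t_1,T)$, starting from a cylinder in which $w$ has positive $L^1$–mass (guaranteed by the nontriviality of the data); this iteratively propagates a positive infimum forward in time and yields the stated uniform lower bound. The delicate point in either route is the same: handling the nonlocal tail term to obtain a quantitative Harnack/maximum principle for the mixed operator with $L^\infty$ source, which is the content of the cited regularity papers and which we invoke as a black box.
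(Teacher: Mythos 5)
Your route is genuinely different from the paper's: the paper applies the parabolic weak Harnack inequality of \cite{weakharnack} directly on a finite chain of space--time cylinders covering $\bar\omega\times[t_1,T)$ (plus a separate cylinder to reach the terminal time $T$), with no eigenfunction barrier at all. Your "alternative" paragraph is essentially the paper's argument, but you leave it as a sketch; your main argument is the eigenfunction-comparison one, and that is where there is a concrete gap. The problem is the barrier. For the untruncated barrier $v=c\,e^{-\lambda_1(t-\tau)}\phi_1$ you need $c\phi_1\le w(\cdot,\tau)$ on \emph{all} of $\Omega$, and this is \emph{not} equivalent to a positive lower bound for $w(\cdot,\tau)$ on a compactly contained $\omega'$: on $\Omega\setminus\omega'$ you have $\phi_1>0$ but only $w\ge 0$, so no admissible $c>0$ is produced unless you additionally prove a boundary (Hopf-type) lower bound $w(\cdot,\tau)\gtrsim\phi_1$ near $\partial\Omega$, which you do not establish.

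The attempted fix via the truncated barrier $c\,e^{-\lambda_1(t-\tau)}\phi_1\chi_{\omega'}$ fails for a sign reason specific to the nonlocal term. With the paper's convention $(-\Delta)^s u(x)=c_{n,s}\,\mathrm{P.V.}\int\frac{u(x)-u(y)}{|x-y|^{n+2s}}\,dy$, cutting $\phi_1$ down to $0$ outside $\omega'$ \emph{increases} $(-\Delta)^s$ at points of $\omega'$:
\begin{equation*}
(-\Delta)^s(\phi_1\chi_{\omega'})(x)-(-\Delta)^s\phi_1(x)=c_{n,s}\int_{\mathbb{R}^n\setminus\omega'}\frac{\phi_1(y)}{|x-y|^{n+2s}}\,dy\;\ge\;0,\qquad x\in\omega',
\end{equation*}
so the truncated function satisfies $v_t-\Delta v+(-\Delta)^s v\ge 0$ in $\omega'$, i.e.\ it is a \emph{super}solution of the homogeneous problem there, not a subsolution, and the comparison principle cannot be run in the direction $w\ge v$. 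The correct barrier is the first Dirichlet eigenfunction $\phi_1^{\omega'}$ of $-\Delta+(-\Delta)^s$ \emph{on the subdomain} $\omega'$ (with exterior datum $0$ on $\mathbb{R}^n\setminus\omega'$): then $c\,e^{-\lambda_1^{\omega'}(t-\tau)}\phi_1^{\omega'}$ solves the homogeneous equation exactly, its exterior values $0$ are dominated by $w\ge0$, its initial values are dominated by $w(\cdot,\tau)\ge\mu$ on $\omega'$, and the comparison goes through. Separately, note that both your seed step (positive $L^1$-mass in an initial cylinder) and the final constant inevitably depend on the data $(f,w_0)$ and require $(f,w_0)\not\equiv(0,0)$ — the statement is false for trivial data; the paper's own proof quietly assumes $f>0$ a.e., so this is a shared defect rather than one specific to your argument.
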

\begin{proof}
We will use the weak Harnack inequality as in \cite{weakharnack}. First, we show the result for any arbitrary $t_2<T$. As $\bar{\omega}\times[t_1,t_2]\subset\Omega_T$ contains $\omega\times[t_1,t_2]$, so it is enough to show the result for $\bar{\omega}\times[t_1,t_2]$. Now since $\bar{\omega}\times[t_1,t_2]$ is compact and hence every open cover admits a finite subcover, it suffices to show the positivity of $w$ in a uniform neighbourhood of any arbitrary point $(\tilde{x},\tilde{t})\in \bar{\omega}\times[t_1,t_2]$. \\
As $\bar{\omega}\times[t_1,t_2]$ is compact, it has a finite and positive distance from the boundary of $\Omega_T$; let us denote this by $D$. We choose $0<r< \operatorname{min}\{\frac{D}{2}, 1\}$, and $(x_0,t_0)\in \Omega_T$ such that 
$
    t_0=\tilde{t}-\frac{13}{16}r^2 \text{ and } x_0=\tilde{x}.
$
Now for this $(x_0,t_0)$, we can choose $0<R=D$ such that $B_R(x_0)\times(t_0-r^2,t_0+r^2)\subset\Omega_T$, and then $r$ satisfies $r<\frac{R}{2}$. Clearly, as $f\geq 0$, $w$ is a supersolution to the homogeneous problem. Now by using the facts that $w\geq 0$ in $\Omega\times(0,T)$ and $w=0$ in $(\mathbb{R}^n\backslash\Omega)\times(0,T)$, we get $w\geq 0 $ in $\mathbb{R}^n\times(0,T)$. Then using [\citealp{weakharnack}, Theorem 2.8 and Corollary 2.9], and $f>0$ a.e. in $\Omega_T$, we get the existence of a positive  constant $C$ depending only on $\omega, [t_1,t_2],n $ and $s$ such that 
\begin{equation*}
    0<C=\fiint_{V^-\left(\frac{r}{2}\right)}w(x,t) d x d t\leq \underset{V^+\left(\frac{r}{2}\right)}{\operatorname{{ess }\operatorname{inf}}}w,
\end{equation*}
where $V^-\left(\frac{r}{2}\right)=B_{\frac{r}{2}(x_0)}\times(t_0-r^2,t_0-\frac{3}{4}r^2)$ and $V^+\left(\frac{r}{2}\right)=B_{\frac{r}{2}(x_0)}\times(t_0+\frac{3}{4}r^2,t_0+r^2)$. As $(\tilde{t}-\frac{1}{16}r^2, \tilde{t}+\frac{1}{16}r^2)\subset (t_0+\frac{3}{4}r^2,t_0+r^2)$, so the result follows.
\\Now, we will extend this up to $T$. For this we denote $D_0=\operatorname{dist}\{\omega,\partial \Omega\}$. As $T>0$, so $\exists \tilde{r}\in(0,1]$ and $\tilde{r}<\frac{D_0}{2}$ such that $\omega\times(T-2\tilde{r}^2,T)\subset \Omega_T$ with $T-\frac{1
}{4}\tilde{r}^2>t_1$. By the above argument for compact time intervals, we get 
\begin{equation*}
     w(x, t) \geq C\left(t_1, n, s, \omega\right)>0 \text { in } \omega\times [t_1,T-\frac{1}{4}\tilde{r}^2].
\end{equation*}
Therefore it just remains to show the positivity of $w$ in $\omega\times(T-\frac{1}{4}\tilde{r}^2,T)$. For $x_0\in\bar{\omega}$, $B_{\frac{\tilde{r}}{2}}(x_0)$ will form an open cover of $\bar{\omega}$, and by compactness, it will have a finite subcover. So it is enough to show the positivity of $w$ in $B_{\frac{\tilde{r}}{2}}(x_0)\times(T-\frac{1}{4}\tilde{r}^2,T)$. Now as $w$ is nonnegative in $\mathbb{R}^n\times(0,T)$ and $B_{D_0}(x_0)\times(T-2\tilde{r}^2,T)\subset \Omega_T$, so by [\citealp{weakharnack}, Theorem 2.8 and Corollary 2.9], we have
\begin{equation*}
    0<C=\fint_{T-2\tilde{r}^2}^{T-\frac{7}{8}\tilde{r}^2}\fint_{B_{\frac{\tilde{r}}{2}}(x_0)}w(x,t) d x d t\leq \underset{B_{\frac{\tilde{r}}{2}}(x_0)\times(T-\frac{1}{4}\tilde{r}^2,T)}{\operatorname{{ess }\operatorname{inf}}}w,
\end{equation*}
and hence we conclude.
\end{proof}
We will use the next parabolic Kato-type inequality to prove comparison results or a priori estimates.
 \begin{proposition}{\label{Kato}}
     Let $u \in 
     L^2(0, T ; H^1(\Omega)) 
     $ be a weak solution of 
     \begin{equation}{\label{katoeqn}}
         u_t-\Delta u+(-\Delta)^s u= f \text{ in } \Omega_T,
     \end{equation} with $f\in L^1(\Omega_T)$ 
    and let $\Phi \in \mathcal{C}^2(\mathbb{R})$ be a convex function such that $\Phi^\prime$ is bounded. Then
\begin{equation*}
    (\Phi(u))_t-\Delta \Phi(u)+(-\Delta)^s \Phi(u) \leq \Phi^{\prime}(u)\left(u_t-\Delta u+(-\Delta)^s u\right)
\end{equation*}
in the sense that for all $\psi \in \mathcal{C}^2(\Omega_T) \cap \mathcal{C}([0, T], L^2(\Omega))$, with the property that $\psi$ has spatial support compactly contained in $\Omega$, and $\psi \geq 0$ in $\Omega_T$ and $\psi(\cdot,T)=0$ and $\psi(\cdot,0)=0$, we have
\begin{equation*}
    \iint_{\Omega_T} \Phi(u)\left(-\psi_t-\Delta \psi +(-\Delta)^s \psi\right) d x d t\leq \iint_{\Omega_T} \Phi^{\prime}(u)f\psi d x d t.
\end{equation*}
\end{proposition}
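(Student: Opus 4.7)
The plan is to test the equation \cref{katoeqn} against $\Phi'(u)\psi$ and exploit the convexity of $\Phi$ on each of the three operators in turn. Morally, the inequality comes from three pointwise facts: $(\Phi(u))_t = \Phi'(u)u_t$; the chain rule together with $\Phi''\geq 0$ giving $-\Delta\Phi(u)\leq -\Phi'(u)\Delta u$; and the supporting-line inequality $\Phi(u(x))-\Phi(u(y))\leq \Phi'(u(x))(u(x)-u(y))$, which transfers to $(-\Delta)^s\Phi(u)(x)\leq \Phi'(u(x))(-\Delta)^s u(x)$. Summing these and pairing against $\psi\geq 0$ gives the claim; the task is to realize each of them rigorously at the weak-solution level.

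\textbf{Step 1 (admissibility by approximation).} Since $\Phi''$ is only locally bounded, I would first replace $\Phi$ by a sequence of convex $\Phi_k\in\mathcal{C}^2(\mathbb{R})$ with $\Phi_k=\Phi$ on $[-k,k]$, extended to be affine with matching derivatives outside $[-k-1,k+1]$ and smoothly joined. Then $\Phi_k''$ is compactly supported, $\|\Phi_k'\|_\infty\leq\|\Phi'\|_\infty$, and $\Phi_k\to\Phi$, $\Phi_k'\to\Phi'$ pointwise. For such $\Phi_k$, the weak gradient $\nabla(\Phi_k'(u)\psi)=\Phi_k''(u)\psi\nabla u+\Phi_k'(u)\nabla\psi$ lies in $L^2(\Omega_T)$ (using $u\in L^2(0,T;H^1(\Omega))$ and the spatial compactness of the support of $\psi$), so $\Phi_k'(u)\psi$ is an admissible test function in the weak formulation of \cref{katoeqn}.

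\textbf{Step 2 (term-by-term analysis).} Plugging in $\Phi_k'(u)\psi$ and using $\psi(\cdot,0)=\psi(\cdot,T)=0$, the chain rule turns the time term into $-\iint_{\Omega_T}\Phi_k(u)\psi_t$. For the local gradient,
\begin{equation*}
\nabla u\cdot\nabla(\Phi_k'(u)\psi) = \Phi_k''(u)\psi|\nabla u|^2 + \nabla\Phi_k(u)\cdot\nabla\psi \geq \nabla\Phi_k(u)\cdot\nabla\psi,
\end{equation*}
since $\Phi_k''\geq 0$ and $\psi\geq 0$; integration by parts in $x$ (valid because $\psi$ has compact spatial support in $\Omega$) bounds the local contribution from below by $-\iint_{\Omega_T}\Phi_k(u)\Delta\psi$. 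For the nonlocal term, a direct expansion of the symmetric form yields
\begin{equation*}
2\bigl[\mathcal{E}(u,\Phi_k'(u)\psi)-\mathcal{E}(\Phi_k(u),\psi)\bigr] = \iint_{\mathbb{R}^{2n}}\Bigl\{\psi(x)\bigl[\Phi_k'(u(x))(u(x)-u(y))-\Phi_k(u(x))+\Phi_k(u(y))\bigr] + (x\leftrightarrow y)\Bigr\}K(x,y)\,dx\,dy,
\end{equation*}
and both bracketed expressions are nonnegative by convexity of $\Phi_k$, while $\psi\geq 0$; hence $\mathcal{E}(u,\Phi_k'(u)\psi)\geq \mathcal{E}(\Phi_k(u),\psi) = \iint_{\Omega_T}\Phi_k(u)(-\Delta)^s\psi$, the last identity being the standard fractional duality (lawful since $\psi$ is smooth with compact spatial support in $\Omega$). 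Assembling the three estimates produces the regularized inequality with $\Phi_k$ in place of $\Phi$.

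\textbf{Step 3 (passage to the limit).} The uniform bounds $|\Phi_k'(u)|\leq\|\Phi'\|_\infty$ and $|\Phi_k(u)|\leq|\Phi(0)|+\|\Phi'\|_\infty|u|$, together with $u\in L^2(\Omega_T)$, $f\in L^1(\Omega_T)$, and smoothness of $\psi$, supply integrable dominants; dominated convergence then sends $k\to\infty$ on both sides and yields the claimed inequality. The principal obstacle throughout is Step 1: guaranteeing that $\Phi_k'(u)\psi$ is genuinely admissible in the ambient weak-solution framework, which is precisely where the hypothesis $u\in L^2(0,T;H^1(\Omega))$, stronger than the very-weak notion of \cref{solution2}, becomes essential.
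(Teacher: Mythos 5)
Your proof is correct and rests on the same three convexity facts as the paper's, but it executes them at a genuinely different level of rigour. The paper argues pointwise: it assumes $u$ is smooth, derives $(-\Delta)^s\Phi(u)\leq\Phi'(u)(-\Delta)^s u$ from the supporting-line inequality and $-\Delta\Phi(u)\leq-\Phi'(u)\Delta u$ from $\Phi''\geq0$, pairs with $\psi\geq0$, and then appeals to an unspecified approximation/density argument. You instead stay entirely at the weak-solution level: you truncate $\Phi$ so that $\Phi_k'(u)\psi$ is a legitimate test function, and you prove the \emph{integrated} versions of the same inequalities --- dropping the nonnegative term $\Phi_k''(u)\psi|\nabla u|^2$ for the local part, and establishing $\mathcal{E}(u,\Phi_k'(u)\psi)\geq\mathcal{E}(\Phi_k(u),\psi)$ by symmetrizing the bilinear form and applying convexity to each bracket. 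This buys you a self-contained argument that never needs $u$ to be smooth, and your Step~1 correctly identifies the admissibility of $\Phi_k'(u)\psi$ as the point where $u\in L^2(0,T;H^1(\Omega))$ is actually used; the paper's version is shorter but leaves the approximation step implicit. Two shared loose ends you inherit from the paper rather than introduce: the weak formulation of \cref{katoeqn} with merely $f\in L^1(\Omega_T)$ (and hence the sense in which the time term can be integrated by parts) is not pinned down, and the duality $\mathcal{E}(\Phi_k(u),\psi)=\iint\Phi_k(u)(-\Delta)^s\psi$ requires either $\Phi_k(0)=0$ or the observation that the constant $\Phi_k(0)$ contributes nothing because $\int_{\mathbb{R}^n}(-\Delta)^s\psi=0$; it would be worth a sentence to normalize $\Phi_k(0)=0$ or to note this cancellation explicitly, and to remark that the spatial integral in the conclusion is really over $\mathbb{R}^n$ restricted back to $\Omega$ via the support of $\psi$.
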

\begin{proof} Assume that $u$ is smooth enough, otherwise one can use approximation argument.  Since $\Phi$ is convex so \begin{equation*}
    \Phi(u(x,t))-\Phi(u(y,t))\leq \Phi^\prime(u(x,t))(u(x,t)-u(y,t)),
\end{equation*} and we get
\begin{equation*}
    \begin{array}{rcl}
         (-\Delta)^s(\Phi(u(x,t)))&=&\int_{\mathbb{R}^n}\frac{\left(\Phi(u(x,t))-\Phi(u(y,t))\right)}{|x-y|^{n+2s}}d y \smallskip\\
&\leq& \int_{\mathbb{R}^n} \Phi^{\prime}(u(x,t))\frac{(u(x,t)-u(y,t))}{|x-y|^{n+2s}}d y\smallskip\smallskip\\&=&\Phi^{\prime}(u(x,t)) (-\Delta )^su(x,t) .
    \end{array}
\end{equation*}
Again, using the fact that the double derivative of a convex function is nonnegative, we get
\begin{equation*}
    -\Delta (\Phi(u))= -\Phi^\prime(u)\Delta u-\Phi^{\prime\prime}(u)\sum_{i=1}^n\left(\frac{\partial u}{\partial x_i}\right)^2\leq -\Phi^\prime(u)\Delta u.
\end{equation*}
Now if $\psi \in \mathcal{C}_0^\infty\left(\Omega_T\right) $, with $\psi \geq 0$ in $\Omega_T$, we have
\begin{equation*}
    \begin{array}{rcl}
          \iint_{\Omega_T} \Phi(u)\left(-\psi_t-\Delta \psi +(-\Delta)^s \psi\right) d x d t&=&\iint_{\Omega_T}\left(\Phi^\prime(u)u_t-\Delta (\Phi(u)) +(-\Delta)^s( \Phi(u))\right) \psi(x,t) d x d t \\
&\leq & \iint_{\Omega_T} \Phi^{\prime}(u)(u_t-\Delta u + (-\Delta )^su) \psi d x d t=\iint_{\Omega_T} \Phi^{\prime}(u)f\psi d x d t.
    \end{array}
\end{equation*}
Since $\psi \in \mathcal{C}_0^\infty\left(\Omega_T\right) $, with $\psi \geq 0$ in $\Omega_T$, can approximate the choice of test functions of our hypothesis, we get the desired result.
\end{proof}
\begin{remark}{\label{katore}} We choose the regularization of $|u|$ by
\begin{equation*}
\Phi_\epsilon(u)=\left(|u|^2+\epsilon^2\right)^{\frac{1}{2}}-\epsilon.
\end{equation*}
for $\epsilon \in(0,1)$. It is then easy to verify that,
\begin{enumerate}
    \item $\Phi_\epsilon(u) \rightarrow|u|$ uniformly as $\epsilon \rightarrow 0$,
    \item $\Phi_\epsilon^{\prime}(u)$ is bounded uniformly with respect to $\epsilon$.
    \item $\Phi_\epsilon(u)$ is convex.
\end{enumerate}
So, using the above theorem we get
\begin{equation*}
    \begin{array}{rcl}
        \iint_{\Omega_T}\Phi_\epsilon(u)\left(-\psi_t-\Delta \psi +(-\Delta)^s \psi\right) d x d t
&\leq &\iint_{\Omega_T} \Phi^\prime_\epsilon(u)(u_t-\Delta u + (-\Delta )^su) \psi d x d t=\iint_{\Omega_T} \Phi^\prime_\epsilon(u)f \psi d x d t.
    \end{array}
\end{equation*}
Now letting $\epsilon \rightarrow 0$, we get using \cref{katoeqn}
\begin{equation*}
    \begin{array}{rcl}
        \iint_{\Omega_T} |u|\left(-\psi_t-\Delta \psi +(-\Delta)^s \psi\right) d x d t
&\leq &\iint_{\Omega_T} sign(u)f \psi d x d t.
    \end{array}
\end{equation*}
Adding $\iint_{\Omega_T} u(-\psi_t-\Delta \psi + (-\Delta )^s\psi) d x d t$ to both side, we get
\begin{equation*}
    \iint_{\Omega_T} u_+\left(-\psi_t-\Delta \psi +(-\Delta)^s \psi\right) d x d t
\leq \iint_{\Omega_T} \chi_{\{u\geq 0\}}f \psi d x d t.
\end{equation*}
Therefore
\begin{equation*}
    \frac{\partial u_+}{\partial t}-\Delta u_++(-\Delta)^su_+\leq \left(\frac{\partial u}{\partial t}-\Delta u+(-\Delta)^ su\right)\operatorname{sign}^+u
\end{equation*}
in the weak sense.
\end{remark}
We now take $\gamma$ to be a positive continuous function over $\overline{\Omega}_T$ and proceed with the following comparison principle. 
\begin{lemma}{\label{comparison}}
Let $s \in(0,1)$ and $a \geq 0$. Consider $\left(f, u_0\right) \in L^{\infty}(\Omega_T) \times L^{\infty}(\Omega)$ to be non-negative bounded functions such that $\left(f, u_0\right) \neq(0,0)$. Assume that $v_1, v_2$ are two non-negative functions with finite energy such that $v_1, v_2 \in 
L^2((0, T) ; H_0^1(\Omega)) \cap L^{\infty}(\Omega_T)$ with
\begin{eqnarray*}
\begin{cases}
(v_1)_t-\Delta v_1+(-\Delta)^s v_1\leq\frac{f}{(v_1+a)^{\gamma(x,t)}} & \text { in } \Omega_T, \\ v_1(x,t)=0 & \text { in }(\mathbb{R}^n \backslash \Omega) \times(0, T), \\ v_1(x, 0)\leq u_0(x) & \text { in } \Omega ;
\end{cases}
\end{eqnarray*}
and
\begin{eqnarray*}
\begin{cases}
(v_2)_t-\Delta v_2+(-\Delta)^s v_2\geq\frac{f}{(v_2+a)^{\gamma(x,t)}} & \text { in } \Omega_T, \\ v_2(x,t)=0 & \text { in }(\mathbb{R}^n \backslash \Omega) \times(0, T), \\ v_2(x, 0)\geq u_0(x) & \text { in } \Omega ;
\end{cases}
\end{eqnarray*}
where $\gamma\in\mathcal{C}(\overline{\Omega}_T)$ and is positive. Then, $v_2 \geq v_1$ in $\Omega_T$.
\end{lemma}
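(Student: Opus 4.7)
The plan is the classical energy argument for comparison: set $w := v_1 - v_2$, test the subtracted weak inequality against $w_+ \chi_{(0,\tilde t)}$ for arbitrary $\tilde t \in (0,T]$, show that each term on the left has a definite sign, and observe that the singular nonlinearity produces a nonpositive contribution on the set $\{w>0\}$. Since $v_1,v_2 \in L^2(0,T;H^1_0(\Omega))\cap L^\infty(\Omega_T)$ and vanish outside $\Omega$, the function $w$ lies in the same space, $w_+ \in L^2(0,T;H^1_0(\Omega))$ is an admissible test function (in the Steklov-averaged formulation, as in the proof of \cref{existence2}), and the initial condition gives $w_+(\cdot,0)=(v_1(\cdot,0)-v_2(\cdot,0))_+ \le (u_0-u_0)_+ = 0$.

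Subtracting the two differential inequalities and pairing with $w_+\chi_{(0,\tilde t)}$ yields
\begin{equation*}
\int_0^{\tilde t}\langle w_t,w_+\rangle\,dt
+\int_0^{\tilde t}\!\int_\Omega \nabla w\cdot\nabla w_+\,dx\,dt
+\int_0^{\tilde t}\mathcal E(w(\cdot,t),w_+(\cdot,t))\,dt
\le \int_0^{\tilde t}\!\int_\Omega \Bigl(\tfrac{f}{(v_1+a)^{\gamma}}-\tfrac{f}{(v_2+a)^{\gamma}}\Bigr)w_+\,dx\,dt.
\end{equation*}
The first term equals $\tfrac12\|w_+(\cdot,\tilde t)\|_{L^2(\Omega)}^2-\tfrac12\|w_+(\cdot,0)\|_{L^2(\Omega)}^2 = \tfrac12\|w_+(\cdot,\tilde t)\|_{L^2(\Omega)}^2$ by the initial comparison. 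The local term reduces to $\int_0^{\tilde t}\int_\Omega |\nabla w_+|^2\,dx\,dt\ge 0$ because $\nabla w\cdot\nabla w_+=|\nabla w_+|^2$ a.e. The nonlocal term is nonnegative by the pointwise algebraic identity $(a-b)(a_+-b_+)\ge (a_+-b_+)^2\ge 0$, since $w=0$ outside $\Omega$ and this controls the contribution from $\Omega\times\Omega^c$ as well.

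It remains to analyze the right-hand side. On the set $\{w_+>0\}=\{v_1>v_2\}$ one has $v_1+a > v_2+a\ge 0$, hence $(v_1+a)^{\gamma(x,t)} > (v_2+a)^{\gamma(x,t)}$ (strictly, since $\gamma>0$ on $\overline{\Omega}_T$), and therefore
\begin{equation*}
\frac{f}{(v_1+a)^{\gamma(x,t)}}-\frac{f}{(v_2+a)^{\gamma(x,t)}}\le 0 \qquad \text{on } \{w_+>0\},
\end{equation*}
using $f\ge 0$; on the complementary set $w_+\equiv 0$ and the integrand is zero. (If $a=0$ and $v_2=0$ at some point of $\{v_1>v_2\}$, we read $f/v_2^{\gamma}$ as $+\infty$, so the difference is $-\infty\le 0$; since $v_2$ is a supersolution this set has measure zero where $f>0$, so the integrand is meaningfully nonpositive.) Combining the three nonnegative left-hand terms with the nonpositive right-hand side gives
\begin{equation*}
\tfrac12\|w_+(\cdot,\tilde t)\|_{L^2(\Omega)}^2\le 0,
\end{equation*}
so $w_+(\cdot,\tilde t)=0$ a.e. in $\Omega$ for every $\tilde t\in(0,T]$, which is the claim $v_2\ge v_1$ in $\Omega_T$.

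The main technical obstacle is the justification of $w_+$ as a test function together with the identity $\int_0^{\tilde t}\langle w_t,w_+\rangle\,dt=\tfrac12\|w_+(\cdot,\tilde t)\|_{L^2(\Omega)}^2-\tfrac12\|w_+(\cdot,0)\|_{L^2(\Omega)}^2$, since the problem data only guarantee $v_1,v_2 \in L^2(0,T;H^1_0(\Omega))\cap L^\infty(\Omega_T)$ without a direct estimate on $(v_i)_t$. This is handled by working with Steklov averages $(v_i)_h$ in time (as in the existence proof of \cref{exist1}, \cref{existence2}), performing the above computation with $((v_1)_h-(v_2)_h)_+$, and passing to the limit $h\to 0$; the singular nonlinearity does not create difficulty in this limit since $(v_1+a)^{-\gamma}$ and $(v_2+a)^{-\gamma}$ remain bounded on $\{w_+>0\}$ where $v_1>v_2\ge0$ together with $f\in L^\infty(\Omega_T)$.
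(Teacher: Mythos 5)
Your proof is correct, but it takes a more self-contained route than the paper. The paper sets $u=v_1-v_2$, observes that $u_t-\Delta u+(-\Delta)^su\le f\bigl((v_1+a)^{-\gamma}-(v_2+a)^{-\gamma}\bigr)\le 0$ on $\{u\ge 0\}$, and then invokes the parabolic Kato inequality (\cref{Kato} together with \cref{katore}) to deduce $(u_+)_t-\Delta u_+ +(-\Delta)^s u_+\le 0$, after which the comparison principle established inside \cref{exist1} (with $u_+(\cdot,0)\le 0$) gives $u_+\equiv 0$. You instead test the subtracted inequality directly with $w_+\chi_{(0,\tilde t)}$ and carry out the sign analysis term by term; this effectively inlines the energy argument that underlies the comparison step of \cref{exist1} and dispenses with Kato's inequality altogether. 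The two proofs rest on the same two ingredients — the monotonicity of $t\mapsto t^{-\gamma}$ on $\{v_1>v_2\}$ and the elementary identities $\nabla w\cdot\nabla w_+=|\nabla w_+|^2$ and $(a-b)(a_+-b_+)\ge(a_+-b_+)^2$ — so the mathematical content is identical. What your version buys is that you never have to convert the distributional Kato inequality (whose test functions vanish at $t=0$ and $t=T$ and are compactly supported in space) back into an energy-class subsolution statement to which the positivity argument of \cref{exist1} applies, a step the paper leaves implicit; the cost is that you must justify $w_+$ as a test function and the chain rule in time yourself, which you correctly flag and resolve via Steklov averaging. Your parenthetical treatment of the degenerate case $a=0$, $v_2=0$ on $\{v_1>v_2\}$ is informal, but no less so than the paper's own handling of the same point.
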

\begin{proof}
    Define $u=v_1-v_2$, then $u \in 
    L^2(0, T ; H_0^1(\Omega))\cap L^{\infty}(\Omega_T)$. We show that $u^{+}=0$. By hypotheses, we get
\begin{equation*}
    u_t-\Delta u+(-\Delta)^s u \leq f\left(\frac{1}{\left(v_1+a\right)^{\gamma(x,t)}}-\frac{1}{\left(v_2+a\right)^{\gamma(x,t)}}\right).
\end{equation*}
Now since $\left(\frac{1}{\left(v_1+a\right)^{\gamma(x,t)}}-\frac{1}{\left(v_2+a\right)^{\gamma(x,t)}}\right) \leq 0$ in the set $\{u \geq 0\}$, then using \cref{Kato,katore}, it holds that
\begin{equation*}
(u_+)_t-\Delta (u_+)+(-\Delta)^s (u_+) \leq 0.    
\end{equation*}
Again we notice that $u_+(x,0)\leq 0$; therefore by comparison principle as of \cref{exist1}, we have $u_+\equiv 0$, and then we conclude.
\end{proof}
\begin{corollary}{\label{corl1}}
As a consequence of the previous comparison principle we get that for $a>0$ and $\gamma $ fixed, if $\left(f, u_0\right)$ are non-negative functions with $\left(f, u_0\right) \in L^\sigma(\Omega_T) \times L^\sigma(\Omega)$, $\sigma\geq 2$, then the problem
\begin{equation}{\label{approb}}
\begin{array}{lcr}
\begin{cases}
v_t-\Delta v+(-\Delta)^s v=\frac{f}{(v+a)^{\gamma(x,t)}} & \text { in } \Omega_T, \\ v(x,t)=0 & \text { in }(\mathbb{R}^n \backslash \Omega) \times(0, T), \\ v(x, 0)= u_0(x) & \text { in } \Omega ;
\end{cases}
\end{array}
\end{equation}
has a unique energy solution $v_a \in 
L^2(0, T ; H_0^1(\Omega))$. 
\\We note that for $\gamma$ being a constant, 
the existence of an energy solution can be shown using a monotonicity argument by observing that $\underline{v}=0$ is a subsolution and $\bar{v}$, the unique solution to the problem
\begin{eqnarray*}
\begin{cases}
\bar{v}_t-\Delta \bar{v}+(-\Delta)^s \bar{v}=\frac{f}{a^\gamma} & \text { in } \Omega_T, \\ \bar{v}(x,t)=0 & \text { in }(\mathbb{R}^n \backslash \Omega) \times(0, T), \\ \bar{v}(x, 0)= u_0(x) & \text { in } \Omega ;
\end{cases}
\end{eqnarray*}
is a supersolution. Then \cref{comparison} allows us to get the existence of a unique solution $v$ to \cref{approb} such that $v \in
L^2(0, T ; H_0^1(\Omega))$ and $0 \leq v \leq \bar{v}$ in $\Omega_T$.\\
Now if $\gamma$ is not constant and belongs to $\mathcal{C}(\overline{\Omega}_T)$, we denote \begin{equation*}
    \gamma^*=\underset{(x,t)\in\Omega_T}{\operatorname{sup}}\gamma(x,t) \text{ and }\gamma_*=\underset{(x,t)\in\Omega_T}{\operatorname{inf}}\gamma(x,t),
\end{equation*} and observe that the unique nonnegative solutions to the problems
\begin{eqnarray*}
\begin{cases}
\bar{v}_t-\Delta \bar{v}+(-\Delta)^s \bar{v}=\frac{f}{a^{\gamma^*}} & \text { in } \Omega_T, \\ \bar{v}(x,t)=0 & \text { in }(\mathbb{R}^n \backslash \Omega) \times(0, T), \\ \bar{v}(x, 0)= u_0(x) & \text { in } \Omega ;
\end{cases}
\end{eqnarray*}
and
\begin{eqnarray*}
\begin{cases}
\bar{v}_t-\Delta \bar{v}+(-\Delta)^s \bar{v}=\frac{f}{a^{\gamma_*}} & \text { in } \Omega_T, \\ \bar{v}(x,t)=0 & \text { in }(\mathbb{R}^n \backslash \Omega) \times(0, T), \\ \bar{v}(x, 0)= u_0(x) & \text { in } \Omega ;
\end{cases}
\end{eqnarray*}
are sub and supersolution (respectively super and subsolution) of 
\begin{equation}{\label{approb1}}
\begin{array}{rcl}
\begin{cases}
\bar{v}_t-\Delta \bar{v}+(-\Delta)^s \bar{v}=\frac{f}{a^{\gamma(x,t)}} & \text { in } \Omega_T, \\ \bar{v}(x,t)=0 & \text { in }(\mathbb{R}^n \backslash \Omega) \times(0, T), \\ \bar{v}(x, 0)= u_0(x) & \text { in } \Omega ;
\end{cases}
\end{array}
\end{equation}
for $a\geq1$ (respectively for $a<1$). Then, by monotonicity argument and comparison principle similar to \cref{comparison}, we get the existence of a unique nonnegative solution to \cref{approb1}, which will be a supersolution to \cref{approb} with $\underline{v}=0$ being its subsolution. So \cref{approb} has a unique nonnegative solution in this case too. We note that these solutions also satisfy the comparison principle, in the sense that if $a_1<a_2$, then we have $v_{a_2} \leq v_{a_1}$. 
\end{corollary}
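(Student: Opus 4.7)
The plan is to prove existence, uniqueness and monotonicity in $a$ by combining the comparison principle \cref{comparison} with a fixed-point argument applied between the explicit sub- and supersolutions indicated in the statement. The regularizing role of $a>0$ is crucial: on the compact set $\overline{\Omega}_T$ the continuous exponent $\gamma$ is bounded, so $(v+a)^{\gamma(x,t)}$ is bounded below by a positive constant uniformly in $v\geq 0$, hence $f/(v+a)^{\gamma(x,t)}\in L^\sigma(\Omega_T)\subset L^2(\Omega_T)$ whenever $f\in L^\sigma(\Omega_T)$, and \cref{exist1} is applicable at every linearized step.

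For constant $\gamma$, I would first apply \cref{exist1} to obtain the unique nonnegative $\bar v\in L^2(0,T;H_0^1(\Omega))$ solving the linear problem with right-hand side $f/a^\gamma$. Since $\bar v\geq 0$ yields $f/a^\gamma\geq f/(\bar v+a)^\gamma$, $\bar v$ is a supersolution of \eqref{approb}, and $\underline v\equiv 0$ is trivially a subsolution. Let $\mathcal{S}(w)$ denote the energy solution of the linear problem with right-hand side $f/(w+a)^\gamma$ and initial datum $u_0$. By the comparison principle of \cref{exist1}, $\mathcal{S}$ sends the convex set $K=\{w\in L^2(0,T;H_0^1(\Omega)):0\leq w\leq \bar v\}$ into itself; dominated convergence (with dominant $f/a^\gamma$) gives $L^2$-continuity of $\mathcal{S}$, and the Aubin--Lions lemma (together with the uniform energy bound obtained by testing the linear equation against $\mathcal{S}(w)$) gives relative compactness of $\mathcal{S}(K)$ in $L^2(\Omega_T)$. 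Schauder's fixed-point theorem then produces a solution $v_a\in K$ of \eqref{approb}, and \cref{comparison} delivers uniqueness.

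For variable $\gamma\in\mathcal{C}(\overline{\Omega}_T)$, compactness gives $0<\gamma_*\leq\gamma(x,t)\leq\gamma^*<\infty$, and since $a\mapsto a^t$ is monotone (increasing if $a\geq 1$, decreasing if $a<1$), the two constant-exponent linear problems in the statement produce an explicit sub- and supersolution of \eqref{approb1}. Either a Schauder argument as above or direct application of \cref{exist1} (since $f/a^{\gamma(x,t)}\in L^\sigma$ by boundedness of $\gamma$) yields the unique $\bar v$ solving \eqref{approb1}; this $\bar v\geq 0$ is then a supersolution of \eqref{approb}, and a second Schauder argument identical in structure produces the unique energy solution $v_a$. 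For monotonicity in $a$, note that for $0<a_1<a_2$ and $w\geq 0$ one has $f/(w+a_1)^{\gamma(x,t)}\geq f/(w+a_2)^{\gamma(x,t)}$; therefore $v_{a_1}$ is a supersolution of the problem with parameter $a_2$, and \cref{comparison} forces $v_{a_2}\leq v_{a_1}$.

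The main obstacle I expect is the failure of the naïve monotone iteration $v^{(n+1)}=\mathcal{S}(v^{(n)})$ to be directly monotone, since $w\mapsto f/(w+a)^\gamma$ is order-reversing; although one could instead work with the even and odd subsequences (both monotone, with a common limit forced by uniqueness), the Schauder route above bypasses this sign-flipping issue and relies only on the explicit sub/supersolution bounds together with the compactness afforded by the uniform $L^\sigma$-regularity of the regularized right-hand side.
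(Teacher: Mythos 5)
Your proposal is correct and follows the paper's skeleton (explicit subsolution $\underline v=0$, explicit supersolution from the non-singular linear problem, uniqueness and monotonicity in $a$ from \cref{comparison}), but it closes the existence step differently. The paper simply invokes ``a monotonicity argument'' between the sub- and supersolution; you rightly point out that the naive iteration $v^{(n+1)}=\mathcal S(v^{(n)})$ is not monotone because $w\mapsto f/(w+a)^{\gamma}$ is order-reversing, and you bypass this with a Schauder fixed-point argument ($\mathcal S$ maps the order interval $K=\{0\le w\le\bar v\}$ into itself, is $L^2$-continuous by dominated convergence with dominant $\max\{a^{-\gamma_*},a^{-\gamma^*}\}f\in L^2$, and is compact by the linear energy estimate plus Aubin--Lions). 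The paper's intended route is presumably the standard fix for decreasing nonlinearities --- the even and odd iterates are separately monotone and their common limit is forced by the comparison principle --- which you mention as an alternative; either device works, and your version has the merit of making explicit a step the paper leaves implicit. Two minor points: for Schauder you should take $K$ as a closed convex subset of the Banach space $L^2(\Omega_T)$ (not of $L^2(0,T;H_0^1(\Omega))$), with the $H_0^1$-regularity of the fixed point recovered afterwards from the equation; and note that \cref{comparison} is stated for bounded data and bounded finite-energy sub/supersolutions, so its use for $(f,u_0)\in L^\sigma\times L^\sigma$ requires the observation that its Kato-inequality proof only needs finite energy and an $L^1$ right-hand side --- an imprecision you share with the paper rather than introduce. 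Your monotonicity-in-$a$ argument ($v_{a_1}$ is a supersolution of the $a_2$-problem since $(w+a_1)^{\gamma}\le(w+a_2)^{\gamma}$ for $w\ge0$, $a_1<a_2$) matches the paper's claim exactly.
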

We now list approximated problems for $\gamma$ being a constant. 
Firstly when $(f,u_0)\in L^\infty(\Omega_T)\times L^\infty(\Omega)$, we consider the following problem for each $k\in \mathbb{N}$,
\begin{equation}{\label{approx}}
\begin{array}{lcr}
\begin{cases}
(u_k)_t-\Delta u_k+(-\Delta)^su_k=\frac{f(x,t)}{(u_k+\frac{1}{k})^\gamma} 
 & \mbox{ in } \Omega_T
 ,\\
u_k=0 &  \mbox{ in } (\mathbb{R}^n\backslash \Omega) \times (0,T),\\
u_k(x,0)=u_0(x)  & \mbox{ in } \Omega.
\end{cases}
\end{array}
\end{equation}
We now take $f$ and $u_0$ may not be bounded and consider 
\begin{equation}{\label{approx4}}
\begin{array}{lcr}
\begin{cases}
(u_k)_t-\Delta u_k+(-\Delta)^su_k=\frac{f_k(x,t)}{(u_k+\frac{1}{k})^\gamma} 
 & \mbox{ in } \Omega_T
 ,\\
u_k=0 &  \mbox{ in } (\mathbb{R}^n\backslash \Omega) \times (0,T),\\
u_k(x,0)=u_{0k}(x)  & \mbox{ in } \Omega;
\end{cases}
\end{array}
\end{equation}
where $f_k(x,t)=T_k(f(x,t))$ and $u_{0k}(x)=T_k(u_0(x))$.\\
To treat variable exponent, we take $(f,u_0)\in L^1(\Omega_T)\times L^1(\Omega)$ and consider the following approximating problem. 
\begin{equation}{\label{approxvari}}
\begin{array}{lcr}
\begin{cases}
(u_k)_t-\Delta u_k+(-\Delta)^su_k=\frac{f_k(x,t)}{(u_k+\frac{1}{k})^{\gamma(x,t)}} 
 & \mbox{ in } \Omega_T
 ,\\
u_k=0 &  \mbox{ in } (\mathbb{R}^n\backslash \Omega) \times (0,T),\\
u_k(x,0)=u_{0k}(x)  & \mbox{ in } \Omega.
\end{cases}
\end{array}
\end{equation}
We will prescribe conditions on $f$ and $u_0$ later on. With these settings, we have the following existence result.
\begin{theorem}{\label{approxexxist}}
    For each of the above problems \cref{approx,approx4,approxvari}, there exists a unique nonnegative energy solution $u_k$ for each $k\in\mathbb{N}$ satisfying the followings:
    \\(a) each $u_k$ is bounded,\\
    (b) the sequence $\{u_k\}_k$ is increasing in $k$,\\
    (c) for each $t_0>0$ and $\omega\subset\subset\Omega_T$, $\exists C(\omega,t_0,n,s)$ such that $u_k\geq C(\omega,t_0,n,s)$.
\end{theorem}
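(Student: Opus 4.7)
The plan is to deduce all four conclusions directly from results already established earlier in the paper (Corollary \ref{corl1}, Remark \ref{bounded}, Lemma \ref{comparison}, and Proposition \ref{Harnack}), treating the three problems \eqref{approx}, \eqref{approx4}, \eqref{approxvari} in a unified manner.

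\textbf{Existence and uniqueness.} Fix $k \in \mathbb{N}$. In every case the right-hand side has the form $\frac{g}{(u_k+1/k)^{\gamma}}$ with $g$ either $f \in L^{\infty}$ (in \eqref{approx}) or $f_k := T_k f \in L^{\infty}$ (in \eqref{approx4}, \eqref{approxvari}), and analogously for the initial datum. This is precisely the setting of Corollary \ref{corl1} with $a = 1/k$ and data $(g, u_{0k})$. For the constant-exponent problems \eqref{approx} and \eqref{approx4}, Corollary \ref{corl1} directly provides a unique nonnegative energy solution $u_k \in L^{2}(0,T;H_0^1(\Omega))$. For the variable-exponent problem \eqref{approxvari}, I would invoke the monotone sub/supersolution construction already indicated in Corollary \ref{corl1}: taking $\underline{v} = 0$ and building $\overline{v}$ from the corresponding constant-exponent problem at $\gamma_*$ or $\gamma^*$ (depending on whether $\frac{1}{k} \lessgtr 1$), and then applying the comparison result to pin down uniqueness.

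\textbf{Boundedness (a).} Since $u_k \geq 0$ and $\gamma \in \mathcal{C}(\overline{\Omega}_T)$ with $0 < \gamma_* \leq \gamma \leq \gamma^*$, the right-hand side is pointwise bounded by
\begin{equation*}
\frac{g(x,t)}{(u_k + 1/k)^{\gamma(x,t)}} \leq \|g\|_{L^{\infty}(\Omega_T)}\, k^{\gamma^*} =: M_k,
\end{equation*}
and the initial datum is in $L^{\infty}(\Omega)$ (by hypothesis for \eqref{approx}, by truncation for \eqref{approx4} and \eqref{approxvari}). Remark \ref{bounded} then gives $u_k \in L^{\infty}(\Omega_T)$ with a $k$-dependent bound.

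\textbf{Monotonicity (b).} To compare $u_k$ and $u_{k+1}$, note that $f_{k+1} \geq f_k$, $u_{0,k+1} \geq u_{0,k}$, and $\frac{1}{k+1} \leq \frac{1}{k}$. Therefore
\begin{equation*}
(u_{k+1})_t - \Delta u_{k+1} + (-\Delta)^s u_{k+1} = \frac{f_{k+1}}{(u_{k+1} + \frac{1}{k+1})^{\gamma}} \geq \frac{f_k}{(u_{k+1} + \frac{1}{k})^{\gamma}},
\end{equation*}
so that $u_{k+1}$ is a supersolution of the problem satisfied by $u_k$ with $a = 1/k$, datum $f_k$ and initial value dominating $u_{0,k}$. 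Lemma \ref{comparison} yields $u_k \leq u_{k+1}$ in $\Omega_T$.

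\textbf{Uniform positivity (c).} Since $\{u_k\}_k$ is increasing, it is enough to bound $u_1$ from below on $\omega \times [t_0, T)$. Define $\tilde{f} := \frac{f_1}{(u_1+1)^{\gamma}}$, which by (a) is a bounded, nonnegative function, and $u_1$ solves the linear problem
\begin{equation*}
(u_1)_t - \Delta u_1 + (-\Delta)^s u_1 = \tilde{f} \quad \text{in } \Omega_T,
\end{equation*}
with initial value $u_{01} \in L^{\infty}(\Omega)$; the nontriviality assumption $(f, u_0) \neq (0,0)$ prevents $(\tilde{f}, u_{01})$ from vanishing identically. Proposition \ref{Harnack} then delivers a constant $C = C(t_0, n, s, \omega) > 0$ with $u_1 \geq C$ on $\omega \times [t_0, T)$, which transfers to every $u_k$ by (b).

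The main technical concern is extending Corollary \ref{corl1} to the variable-exponent problem \eqref{approxvari}, since that statement only spells out the argument for constant $\gamma$. The extension is essentially the two-sided sub/supersolution scheme sketched there, but verifying that the sub/supersolutions remain ordered in both regimes $\frac{1}{k} \geq 1$ and $\frac{1}{k} < 1$ requires a small case distinction — this is the only spot where one must be a bit careful; the rest of the argument is a clean assembly of previously proven facts.
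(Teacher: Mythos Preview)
Your proof is correct and follows essentially the same route as the paper: existence and uniqueness via Corollary~\ref{corl1}, boundedness from the pointwise estimate on the right-hand side together with Remark~\ref{bounded}, monotonicity from the comparison principle of Lemma~\ref{comparison}, and the uniform lower bound from Proposition~\ref{Harnack} applied to $u_1$. Your concluding concern is unnecessary: Corollary~\ref{corl1} already treats the variable-exponent case explicitly (see the paragraph beginning ``Now if $\gamma$ is not constant\ldots''), so no additional case distinction is required for \eqref{approxvari}.
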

\begin{proof}
The existence, uniqueness and nonnegativity of $u_k$'s follow from the comparison principle in \cref{comparison} and \cref{corl1}. Since $f$ and $u_0$ are nonnegative, therefore $0\leq f_k\leq f_{k+1}$ and $0\leq u_{0k}\leq u_{0(k+1)}$ for each $k$ and then using the similar technique as that of \cref{comparison}, we obtain that the sequence $\left\{u_k\right\}_k$ is increasing in $k$. 
Therefore $u_k \geq u_1$ for all $k$. Now for $f\in L^\infty(\Omega_T)$ we have $
    \frac{f}{(u_1+1)^{\gamma}}
\leq f$ and for $f\notin L^\infty(\Omega_T)$ we have $
    \frac{f_1}{(u_1+1)^{\gamma(x,t)}}
\leq f_1$ 
and hence $\frac{f_1}{(u_1+1)^{\gamma(x,t)}}\in L^\infty(\Omega_T)$. Therefore $u_1\in L^\infty(\Omega_T)$ and by \cref{Harnack}, we obtain that for all $\omega \subset \subset \Omega$ and for all $t_0>0
$, $ u_1(x, t) \geq$ $C\left(\omega, t_0,n,s\right) $ in $\omega \times\left[t_0, T\right)$. Thus for all $k \geq 1$, we have $u_k(x, t) \geq C\left(\omega, t_0,n,s\right) $ in $\omega \times\left[t_0, T\right)$. Also, we observe that for each $k$, $
    \frac{f_k}{(u_k+\frac{1}{k})^{\gamma(x,t)}}
\leq \operatorname{max}\{1,k^{\gamma^*}\}f_k, 
$
    where $\gamma^*=\underset{(x,t)\in\Omega_T}{\operatorname{sup}}\gamma(x,t)$. Therefore each $u_k$ is bounded.
\end{proof}
\begin{remark}{\label{ubase}}
    As each $u_k\in \mathcal{C}([0,T],L^2(\Omega))\subset\mathcal{C}([0,T],L^1(\Omega))$, therefore at each time level $t\in[0,T]$, we have $u_k(\cdot,t)\in L^1(\Omega)$, this along with the monotonicity of $\{u_k\}_k$ in $k$ allows us to define the pointwise limit (a.e.) $u$ of $\{u_k\}_k$ in $\Omega$ at each time $t\in[0,T]$. Also $u$ satisfies $u(\cdot,0)=u_0(\cdot)$ in $L^1$ sense. Again as each $u_k=0$ in $(\mathbb{R}^n\backslash\Omega)\times(0,T)$, therefore $u$ also satisfies the same. Further we have $u\geq u_k$ for each $k$ and hence $u(x, t) \geq C\left(\omega, t_0,n,s\right) $ in $\omega \times\left[t_0, T\right)$ for each $\omega\subset\subset\Omega$ and $t_0>0$.
\end{remark}
To treat the singular case, we define next as:
\begin{definition}{\label{very weak}}
    Let $\left(f, u_0\right) \in L^1\left(\Omega_T\right) \times L^1(\Omega)$ be a pair of non-negative functions and $\gamma>0$ is a constant. We say that $u$ is a very weak solution to the problem
    \begin{equation}{\label{problem}}
    \begin{array}{lcr}
\begin{cases}
u_t-\Delta u+(-\Delta)^su=\frac{f(x,t)}{u^\gamma} 
&  \mbox{ in } \Omega_T
,\\
u=0 &  \mbox{ in } (\mathbb{R}^n\backslash \Omega) \times (0,T),\\
u(x,0)=u_0(x)  & \mbox{ in } \Omega;
\end{cases}
\end{array}
\end{equation}
if $u \in 
L^1(\Omega_T)$ satisfying $u\equiv 0$ in $(\mathbb{R}^n\backslash\Omega)\times(0,T)$,
$\forall \omega \subset \subset \Omega$ and $\forall t_0>0$, $\exists c\equiv c(\omega, t_0,n,s)>0$ such that $ u(x, t) \geq c>0$, in $\omega \times\left[t_0, T\right)$, $u(\cdot,0)=u_0(\cdot)$, and for all $\varphi \in \mathcal{C}_0^{\infty}\left(\Omega_T\right)$, we have
\begin{equation*}
    \iint_{\Omega_T} u\left(-\varphi_t-\Delta \varphi+(-\Delta)^s \varphi\right) d x d t=\iint_{\Omega_T} \frac{f \varphi}{u ^\gamma} d x d t.
\end{equation*}
\end{definition}
\begin{definition}{\label{very weak variable}}
    Let $\left(f, u_0\right) 
    $ be a pair of non-negative functions with $u_0\in L^1(\Omega)$, $f\in L^1(\Omega_T)$ 
    and $\gamma\in \mathcal{C}(\overline{\Omega}_T)$ be a positive function. Then we say that $u$, such that $u\equiv 0$ in $(\mathbb{R}^n\backslash\Omega)\times(0,T)$, is a very weak solution to the problem
    \begin{equation}{\label{problemvari}}
    \begin{array}{lcr}
\begin{cases}
u_t-\Delta u+(-\Delta)^su=\frac{f(x,t)}{u^{\gamma(x,t)}} 
&  \mbox{ in } \Omega_T
,\\
u=0 &  \mbox{ in } (\mathbb{R}^n\backslash \Omega) \times (0,T),\\
u(x,0)=u_0(x),& \mbox{ in } \Omega;
\end{cases}
\end{array}
\end{equation}
if $u \in L^1(\Omega_T)
$ and
$\forall \omega \subset \subset \Omega$ and $\forall t_0>0$, $\exists c\equiv c(\omega, t_0,n,s)>0$ such that $ u(x, t) \geq c>0$, in $\omega \times\left[t_0, T\right)$, $u(\cdot,0)=u_0(\cdot)$, and for all $\varphi \in \mathcal{C}_0^{\infty}\left(\Omega_T\right)$, we have
\begin{equation*}
    \iint_{\Omega_T} u\left(-\varphi_t-\Delta \varphi+(-\Delta)^s \varphi\right) d x d t=\iint_{\Omega_T} \frac{f \varphi}{u ^{\gamma(x,t)}} d x d t.
\end{equation*}
\end{definition}
\subsection{\textbf{Main Results}}
First, we consider $\gamma$ to be a constant and then take it as a positive continuous function. In this section, we state our main results depending on $\gamma$ and the regularity of initial conditions.
\subsubsection{\textbf{Existence for bounded data for $\gamma$ being a constant}}
In the case of bounded data, we will have the next existence result.
\begin{theorem}{\label{mainth1}} Let $\left(f, u_0\right) \in L^\infty\left(\Omega_T\right) \times L^\infty(\Omega)$ be a pair of non-negative functions and $\gamma>0$. Then \cref{problem} has a bounded very weak positive solution $u
$ in the sense of \cref{very weak} such that $u^\frac{\gamma+1}{2}\in 
L^2(0, T ; H_0^1(\Omega))$ and $u\in \mathcal{C}([0,T],L^2(\Omega))$. Moreover if $\gamma \leq 1$ or $\operatorname{Supp}(f) \subset \subset \Omega_T$, then $u \in 
L^2(0, T ; H_0^1(\Omega))$. 
\end{theorem}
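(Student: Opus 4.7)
\medskip
\noindent\textbf{Proof proposal.}
The plan is to exhibit the solution as the monotone a.e.\ limit $u$ of the approximating energy solutions $u_k$ of \cref{approx}, which exist and are well-behaved by \cref{approxexxist}. The first order of business is to derive estimates that are uniform in $k$. For the energy-type bound on $u^{(\gamma+1)/2}$, the natural test function is $u_k^{\gamma}$ (admissible because $u_k\in L^\infty(\Omega_T)\cap L^2(0,T;H_0^1(\Omega))$ by \cref{approxexxist}). Using $\int u_k^{\gamma}\partial_t u_k=\frac{1}{\gamma+1}\partial_t \int u_k^{\gamma+1}$, $\nabla u_k\cdot\nabla u_k^{\gamma}=\frac{4\gamma}{(\gamma+1)^2}|\nabla u_k^{(\gamma+1)/2}|^2$, and the pointwise algebraic inequality \cref{algebraic}(i) for the nonlocal term, the right-hand side is controlled by $\int f\, u_k^{\gamma}/(u_k+1/k)^{\gamma}\leq \|f\|_{L^\infty}|\Omega_T|$. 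Together with the initial datum contribution $\|u_0\|_{L^{\gamma+1}(\Omega)}^{\gamma+1}$, this yields
\begin{equation*}
\sup_{t\in[0,T]}\int_\Omega u_k^{\gamma+1}(x,t)\,dx+\int_0^T\!\!\int_\Omega |\nabla u_k^{(\gamma+1)/2}|^2\,dx\,dt+\int_0^T\!\!\iint_{\mathbb{R}^{2n}}\frac{|u_k^{(\gamma+1)/2}(x,t)-u_k^{(\gamma+1)/2}(y,t)|^2}{|x-y|^{n+2s}}\,dx\,dy\,dt\leq C,
\end{equation*}
with $C$ independent of $k$.

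\medskip
Next, I would establish a uniform $L^\infty(\Omega_T)$ bound on $u_k$. Fix $L\geq \max(1,\|u_0\|_{L^\infty})$ and test \cref{approx} with $(u_k-L)_+$. The parabolic, local-diffusion, and nonlocal terms are all nonnegative in the usual way (the nonlocal one by monotonicity of $r\mapsto (r-L)_+$), while on $\{u_k\geq L\}$ the singular term satisfies $f/(u_k+1/k)^\gamma\leq \|f\|_{L^\infty}L^{-\gamma}$. A Stampacchia-type iteration, as in the argument of \cref{bounded}, then produces $\|u_k\|_{L^\infty(\Omega_T)}\leq M$ with $M$ independent of $k$. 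Combining this $L^\infty$ bound with the energy estimate above produces strong convergence in $L^q(\Omega_T)$ for every $q<\infty$ via monotone convergence, and the standard Aubin--Lions argument applied to $u_k^{(\gamma+1)/2}$ yields $u\in\mathcal{C}([0,T],L^2(\Omega))$.

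\medskip
To pass to the limit in the distributional equation, fix $\varphi\in\mathcal{C}_0^\infty(\Omega_T)$ with $\operatorname{supp}\varphi\subset\subset\omega\times[t_0,T-t_0]$ for some $\omega\subset\subset\Omega$ and $t_0>0$. By \cref{approxexxist}(c), $u_k\geq c(\omega,t_0,n,s)>0$ on this support uniformly in $k$, so $f_k\varphi/(u_k+1/k)^\gamma$ is dominated by $\|f\|_{L^\infty}\|\varphi\|_{L^\infty}c^{-\gamma}$ and converges a.e.\ to $f\varphi/u^\gamma$; dominated convergence handles the right-hand side. The left-hand side passes to the limit by the $L^\infty$ bound, monotone convergence, and the regularity of $\varphi$. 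This produces the very weak formulation of \cref{very weak} and identifies the positivity lower bound required there.

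\medskip
For the last assertion, if $\gamma\leq 1$ then the $L^\infty$ bound gives $u_k^{\gamma-1}\geq M^{\gamma-1}$, hence
\begin{equation*}
\int_0^T\!\!\int_\Omega |\nabla u_k|^2\,dx\,dt\leq M^{1-\gamma}\int_0^T\!\!\int_\Omega u_k^{\gamma-1}|\nabla u_k|^2\,dx\,dt=\frac{4M^{1-\gamma}}{(\gamma+1)^2}\int_0^T\!\!\int_\Omega |\nabla u_k^{(\gamma+1)/2}|^2\,dx\,dt\leq C,
\end{equation*}
and similarly for the Gagliardo seminorm; weak lower semicontinuity then yields $u\in L^2(0,T;H_0^1(\Omega))$. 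If instead $\operatorname{supp}(f)\subset\subset\Omega_T$, then $f$ is supported on a set where $u_k\geq c>0$ uniformly (by \cref{approxexxist}(c)), so testing with $u_k$ itself directly bounds $\int f u_k/(u_k+1/k)^\gamma\leq c^{-\gamma}\|f\|_{L^1}\|u_k\|_{L^\infty}$, again yielding the $H_0^1$ energy estimate for $u_k$ and thereby for $u$. The main obstacle I anticipate is making the Stampacchia iteration for the $L^\infty$ bound interact cleanly with the nonlocal term in the mixed operator; this is where the structural inequality $(r-L)_+(a)-(r-L)_+(b)$ multiplied against $a-b$ being nonnegative must be used carefully.
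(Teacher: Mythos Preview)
Your overall strategy matches the paper's, but the $\mathcal{C}([0,T],L^2(\Omega))$ step is a genuine gap. Invoking ``standard Aubin--Lions'' is not enough: to extract $\mathcal{C}([0,T];B)$ compactness via Simon's theorem you would need a uniform bound on $\partial_t u_k$ (or $\partial_t u_k^{(\gamma+1)/2}$) in some dual space, and the equation does not give this directly because the right-hand side $f/(u_k+\tfrac1k)^\gamma$ is only controlled by $k^\gamma\|f\|_{L^\infty}$, which blows up with $k$. The paper bypasses compactness entirely with a comparison argument: applying \cref{Kato}/\cref{katore} to $\tilde u=(u_k+\tfrac1k)-(u_l+\tfrac1l)$ for $k\geq l$ shows $\tilde u_+\equiv 0$, hence $0\leq u_k-u_l\leq \tfrac1l-\tfrac1k$. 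Thus $\{u_k\}$ is Cauchy in $L^\infty(\Omega_T)$, and since each $u_k\in\mathcal C([0,T],L^2(\Omega))$, so is the uniform limit $u$. This is the idea your outline is missing.

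Two smaller points. First, for the uniform $L^\infty$ bound the paper does not iterate: it sets $w_k=u_k^{\gamma+1}$, uses the Kato inequality \cref{Kato} to get $(w_k)_t-\Delta w_k+(-\Delta)^s w_k\leq(\gamma+1)f$, and compares with the bounded solution of the linear problem with data $((\gamma+1)f,u_0^{\gamma+1})$; your Stampacchia route should also work but is longer. Second, testing directly with $u_k^\gamma$ is not obviously admissible when $\gamma<1$, since $u_k$ may vanish near the parabolic boundary and $u_k^\gamma\notin H_0^1$ in general; the paper regularizes with $((u_k+\varepsilon)^\gamma-\varepsilon^\gamma)\chi_{(0,t)}$ and passes $\varepsilon\to0$ via Fatou. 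The remainder of your argument (passage to the limit in the very weak formulation, and the cases $\gamma\leq1$ or $\operatorname{Supp}(f)\subset\subset\Omega_T$) is correct and essentially equivalent to the paper's.
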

\subsubsection{\textbf{Existence for general data for $\gamma$ being a constant}} In this subsection, we will consider general data. According to the regularity of initial conditions, we will consider two cases as $u_0\in L^{\gamma+1}(\Omega)$ and $u_0\in L^{1}(\Omega)$. Let us state the following existence results.
\begin{theorem}{\label{mainth2}} Let $\left(f, u_0\right) \in L^1\left(\Omega_T\right) \times L^{\gamma+1}(\Omega)$ be a pair of non-negative functions and $\gamma>0$. Then \cref{problem} has a very weak positive solution $u$ in the sense of \cref{very weak} such that $u^\frac{\gamma+1}{2}\in 
L^2(0, T ; H_0^1(\Omega))$ and $u\in L^2\left(0,T;L^{\sigma}(\Omega)\right)$, where $\sigma=\frac{n(1+\gamma)}{n-2}$ and $\sigma\geq 2$ if $\gamma\geq 1$.
\end{theorem}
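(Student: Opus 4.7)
The plan is to construct the solution as the monotone limit of the sequence $\{u_k\}$ of nonnegative bounded energy solutions to the approximating problem \eqref{approx4} given by \cref{approxexxist}, which are increasing in $k$ and uniformly positive on every parabolic compact subset of $\Omega_T$.

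The central estimate would come from testing the equation for $u_k$ against $u_k^\gamma$ and integrating in time up to $t \in (0,T]$. The algebraic inequality \cref{algebraic}(i) controls the nonlocal contribution from below by the Gagliardo seminorm of $u_k^{(\gamma+1)/2}$; the chain rule identity $\nabla u_k \cdot \nabla u_k^\gamma = \tfrac{4\gamma}{(\gamma+1)^2}|\nabla u_k^{(\gamma+1)/2}|^2$ does the same for the local diffusion; and $\partial_t u_k \cdot u_k^\gamma = \tfrac{1}{\gamma+1}\partial_t u_k^{\gamma+1}$ handles the evolution term. Since $f_k u_k^\gamma/(u_k+1/k)^\gamma \leq f$ and $u_{0k} \leq u_0$, one arrives (after discarding nonnegative pieces) at
\begin{equation*}
\tfrac{1}{\gamma+1}\int_\Omega u_k^{\gamma+1}(x,t)\,dx + \tfrac{4\gamma}{(\gamma+1)^2}\int_0^t\!\int_\Omega |\nabla u_k^{(\gamma+1)/2}|^2\,dx\,d\tau \leq \|f\|_{L^1(\Omega_T)} + \tfrac{1}{\gamma+1}\|u_0\|_{L^{\gamma+1}(\Omega)}^{\gamma+1}.
\end{equation*}
Taking the supremum over $t$ yields $\{u_k\}$ uniformly bounded in $L^\infty(0,T;L^{\gamma+1}(\Omega))$ and $\{u_k^{(\gamma+1)/2}\}$ bounded in $L^2(0,T;H_0^1(\Omega))$. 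Applying \cref{Sobolev embedding} together with the identity $(\gamma+1)\cdot 2^*/2 = \sigma$ gives an $L^{\gamma+1}(0,T;L^\sigma(\Omega))$ bound for $u_k$, which embeds into $L^2(0,T;L^\sigma(\Omega))$ on the finite time interval when $\gamma\geq 1$; for $0<\gamma<1$ one interpolates this with $L^\infty(L^{\gamma+1})$ to recover the $L^2(L^\sigma)$ estimate.

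With uniform bounds in hand, the monotonicity of $\{u_k\}$ forces $u_k \to u$ in $L^{\gamma+1}(\Omega_T)$ by monotone convergence, and weak lower semicontinuity of the norms transfers the bounds to $u^{(\gamma+1)/2}$ and $u$. To pass to the limit in the very weak formulation, I would fix $\varphi \in \mathcal{C}_0^\infty(\Omega_T)$, supported in $\omega \times [t_0, T-t_0]$ for some $\omega\subset\subset\Omega$ and $t_0>0$. The singular integrand $f_k \varphi/(u_k+1/k)^\gamma$ is dominated by $f\|\varphi\|_\infty/c^\gamma$ on $\mathrm{supp}(\varphi)$ thanks to the uniform positivity $u_k \geq u_1 \geq c(\omega,t_0,n,s)>0$ recorded in \cref{approxexxist}(c), hence converges to $f\varphi/u^\gamma$ by dominated convergence. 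The left-hand side converges simply from the $L^1$ convergence of $u_k$ and the bounded multiplier $-\varphi_t - \Delta\varphi + (-\Delta)^s\varphi$. The initial trace $u(\cdot,0)=u_0$ in $L^1(\Omega)$ and the pointwise positivity of $u$ on compacta are inherited from the monotone structure as recorded in \cref{ubase}.

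The main technical obstacle is the rigorous use of $u_k^\gamma$ as a test function. For $\gamma \geq 1$, $u_k^\gamma \in L^2(0,T;H_0^1(\Omega))$ because $u_k$ is bounded, but for $0<\gamma<1$ the prefactor $u_k^{\gamma-1}$ in $\nabla u_k^\gamma = \gamma u_k^{\gamma-1}\nabla u_k$ is singular where $u_k$ vanishes on the parabolic boundary. To overcome this, I would replace the test function by the surrogate $(u_k+\varepsilon)^\gamma - \varepsilon^\gamma$, which lies in $L^2(0,T;H_0^1(\Omega))$ since $u_k+\varepsilon \geq \varepsilon>0$ and $u_k\in L^\infty$, perform the same computation, and then pass $\varepsilon \to 0^+$ using Fatou's lemma on the left-hand energy terms and dominated convergence on the singular right-hand side, exploiting that $(u_k+\varepsilon)^\gamma - \varepsilon^\gamma \uparrow u_k^\gamma$ monotonically. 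Once the energy inequality is justified, the remaining steps outlined above close the argument.
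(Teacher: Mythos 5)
Your proposal follows the paper's proof essentially step by step: the same approximating problems \cref{approx4}, the same surrogate test function $((u_k+\varepsilon)^\gamma-\varepsilon^\gamma)\chi_{(0,t)}$ with $0<\varepsilon<1/k$ followed by $\varepsilon\to 0$, the same energy inequality via \cref{algebraic}(i) and the chain rule, the same monotone limit, and the same dominated-convergence passage to the limit in the very weak formulation using the uniform positivity of $u_k$ on $\operatorname{Supp}\varphi$. All of that is correct and matches the paper.

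The one place you genuinely diverge is the derivation of $u\in L^2(0,T;L^\sigma(\Omega))$, and there your argument has a gap for $0<\gamma<1$. You are right that the Sobolev embedding applied to $u_k^{(\gamma+1)/2}\in L^2(0,T;H^1_0(\Omega))$ yields $\int_0^T\|u_k(t)\|_{L^\sigma(\Omega)}^{\gamma+1}\,dt\le C$, i.e.\ a uniform bound in $L^{\gamma+1}(0,T;L^\sigma(\Omega))$, and that this embeds into $L^2(0,T;L^\sigma(\Omega))$ precisely when $\gamma+1\ge 2$. But for $\gamma<1$ the proposed interpolation with $L^\infty(0,T;L^{\gamma+1}(\Omega))$ cannot recover the claim: interpolating $L^{\gamma+1}(0,T;L^\sigma)$ against $L^\infty(0,T;L^{\gamma+1})$ with time exponent $2$ forces the interpolation parameter $\theta\le\frac{\gamma+1}{2}$, and the resulting spatial exponent $p_\theta$ satisfies $\frac{1}{p_\theta}\ge\frac{1-\gamma}{2(\gamma+1)}+\frac{\gamma+1}{2\sigma}>\frac{1}{\sigma}$ because $\sigma=\frac{n(1+\gamma)}{n-2}>\gamma+1$; so you only reach $L^2(0,T;L^{p}(\Omega))$ for some $p<\sigma$, not $L^2(0,T;L^\sigma(\Omega))$. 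Since the space $L^\infty(0,T;L^{\gamma+1})$ is weaker in $x$ than $L^\sigma$, no interpolation between these two bounds can upgrade the time exponent at the spatial level $\sigma$. For what it is worth, the paper's own proof simply asserts that the embedding on $u^{(\gamma+1)/2}$ gives $u\in L^2(0,T;L^\sigma(\Omega))$, which is the identical gap hidden rather than addressed; your treatment of the case $\gamma\ge 1$ is in fact a correct clarification of what the estimate actually delivers, and for $\gamma<1$ the honest conclusion from these estimates is $u\in L^{\gamma+1}(0,T;L^\sigma(\Omega))$ only.
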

\begin{theorem}{\label{mainth3}} Let $\left(f, u_0\right) \in L^1(\Omega_T) \times L^{1}(\Omega)$ be a pair of non-negative functions and $\gamma>0$. Then \cref{problem} has a very weak positive solution $u$ in the sense of \cref{very weak} such that $T_k(u)^\frac{\gamma+1}{2}\in 
L^2(0, T ; H_0^1(\Omega))$ for all $k>0$.
\end{theorem}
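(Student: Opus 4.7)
The plan is to construct the very weak solution as the monotone pointwise limit of the bounded nonnegative energy solutions $\{u_k\}_k$ to the approximating problems \cref{approx4}, whose existence and basic properties are furnished by \cref{approxexxist}. By \cref{ubase}, the pointwise limit $u:=\lim_k u_k$ is well defined, vanishes outside $\Omega$, satisfies $u(\cdot,0)=u_0(\cdot)$ in the $L^1$ sense, and obeys the required uniform strict positivity on compact subsets of $\Omega_T$. It therefore remains to establish (i) $u\in L^1(\Omega_T)$, (ii) the Sobolev regularity $T_k(u)^{(\gamma+1)/2}\in L^2(0,T;H_0^1(\Omega))$ for each $k>0$, and (iii) the integral identity of \cref{very weak}.

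For the truncation energy estimate, I would fix $j>0$ and test \cref{approx4} with $T_j(u_k)^\gamma$ (after the standard regularization $(T_j(u_k)+\varepsilon)^\gamma-\varepsilon^\gamma$ and letting $\varepsilon\to 0$, to accommodate $\gamma<1$). The local gradient term supplies $\frac{4\gamma}{(\gamma+1)^2}\|\nabla T_j(u_k)^{(\gamma+1)/2}\|_{L^2(\Omega_T)}^2$; the nonlocal contribution is nonnegative by monotonicity of $\sigma\mapsto T_j(\sigma)^\gamma$; the time derivative integrates to $\int_\Omega G_j(u_k(\cdot,T))\,dx-\int_\Omega G_j(u_{0k})\,dx$, with $G_j(\sigma):=\int_0^\sigma T_j(\xi)^\gamma d\xi$ obeying $\frac{T_j(\sigma)^{\gamma+1}}{\gamma+1}\leq G_j(\sigma)\leq j^\gamma \sigma$; and the right-hand side is bounded by $\|f\|_{L^1(\Omega_T)}$ since $T_j(u_k)^\gamma\leq(u_k+1/k)^\gamma$. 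Taking suprema in time yields, uniformly in $k$,
\[
\sup_{t\in[0,T]}\int_\Omega T_j(u_k)^{\gamma+1}(\cdot,t)\,dx + \|\nabla T_j(u_k)^{(\gamma+1)/2}\|_{L^2(\Omega_T)}^2 \leq C(1+j^\gamma).
\]
Weak compactness in $L^2(0,T;H_0^1(\Omega))$, paired with the pointwise identification through $u_k\to u$ a.e., then gives (ii).

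The main obstacle is (i): the estimate above only controls $T_j(u_k)$ in norms whose bound grows with $j$, so Fatou alone does not settle the global integrability of $u$. The plan is to interpolate: combining the $L^\infty(0,T;L^2(\Omega))$ bound on $T_j(u_k)^{(\gamma+1)/2}$ with the $L^2(0,T;L^{2^*}(\Omega))$ bound coming from \cref{Sobolev embedding}, via the parabolic embedding $L^\infty(0,T;L^2(\Omega))\cap L^2(0,T;H_0^1(\Omega))\hookrightarrow L^{2(n+2)/n}(\Omega_T)$ (a standard consequence of \cref{gagliardo}), one obtains
\[
\|T_j(u_k)\|_{L^{(\gamma+1)(n+2)/n}(\Omega_T)}^{(\gamma+1)(n+2)/n} \leq C(1+j^\gamma)^{(n+2)/n}.
\]
Chebyshev's inequality then furnishes the Marcinkiewicz-type bound $|\{u_k>j\}|\leq C\,j^{-(n+2)/n}$, uniformly in $k$ for $j$ large; since $(n+2)/n>1$, Cavalieri's principle yields a uniform $L^1(\Omega_T)$ bound on $\{u_k\}$, so by monotone convergence $u\in L^1(\Omega_T)$ and $u_k\to u$ strongly in $L^1(\Omega_T)$. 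Finally, for (iii), any $\varphi\in\mathcal{C}_0^\infty(\Omega_T)$ has support in some $\omega\times[t_0,T-t_0]$: the left-hand side of the weak formulation for $u_k$ converges by the $L^1$ convergence of $u_k$, while on the right-hand side the uniform lower bound $u_k\geq C(\omega,t_0,n,s)>0$ on $\supp\varphi$ (from \cref{approxexxist}) dominates the integrand by $f\|\varphi\|_\infty C^{-\gamma}$, and dominated convergence closes the argument.
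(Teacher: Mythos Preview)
Your argument is correct and the overall scheme (approximate via \cref{approx4}, test with $\big((T_j(u_k)+\varepsilon)^\gamma-\varepsilon^\gamma\big)\chi_{(0,t)}$, pass to the limit) is exactly what the paper does. The only substantive difference is in step~(i). You treat the $L^1(\Omega_T)$ bound as ``the main obstacle'' and obtain it through the parabolic embedding $L^\infty(0,T;L^2)\cap L^2(0,T;H_0^1)\hookrightarrow L^{2(n+2)/n}$ applied to $T_j(u_k)^{(\gamma+1)/2}$, followed by Chebyshev to extract a uniform Marcinkiewicz bound $|\{u_k>j\}|\lesssim j^{-(n+2)/n}$. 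This works, but the paper observes something simpler: your own primitive $G_j(\sigma)=\int_0^\sigma T_j(\xi)^\gamma\,d\xi$ satisfies not only the upper bound $G_j(\sigma)\leq j^\gamma\sigma$ you recorded, but also a \emph{linear lower bound} $G_j(\sigma)\geq j^\gamma\sigma-\frac{\gamma}{\gamma+1}j^{\gamma+1}$ (check it: equality for $\sigma\geq j$, and for $\sigma\in[0,j]$ the function $\frac{\sigma^{\gamma+1}}{\gamma+1}-j^\gamma\sigma+\frac{\gamma j^{\gamma+1}}{\gamma+1}$ is decreasing and vanishes at $\sigma=j$). Since $\sup_t\int_\Omega G_j(u_k(\cdot,t))\,dx$ is already controlled by your energy estimate, this immediately yields $\{u_k\}$ bounded in $L^\infty(0,T;L^1(\Omega))$, hence in $L^1(\Omega_T)$, without any interpolation or level-set analysis. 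Your detour buys a slightly sharper Marcinkiewicz-type integrability ($u$ in weak-$L^{(n+2)/n}(\Omega_T)$), while the paper's route gives time-uniform $L^1$ control; for the purposes of the theorem either suffices, but you overlooked the direct argument.
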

\subsubsection{\textbf{Improved results for $\gamma $ being a constant}}
We now break $\gamma$ in three parts namely $0<\gamma <1$, $\gamma=1$ and $\gamma >1$. We improve our results to find solutions in better spaces. For this, we take $u_0\in L^{\operatorname{max}(\gamma+1,2)}(\Omega).$ The case $\gamma=1$ will be same as that of \cref{mainth2}. For $\gamma >1$, we cannot find solutions in $
L^2(0, T ; H_0^1(\Omega))$. In fact if we look for $
L^2(0, T ; H^1(\Omega))$ estimates, we will only get them in $
L^2(t_0, T ; H_{\operatorname{loc}}^1(\Omega))$ for each $t_0>0$. We state our next result for $\gamma >1$ as follows:
\begin{theorem}{\label{mainth4}} Let $\gamma >1$. Assume that $\left(f, u_0\right) \in L^1\left(\Omega_T\right) \times L^{\gamma+1}(\Omega)$ be a pair of non-negative functions. Then \cref{problem} has a very weak positive solution $u$ in the sense of \cref{very weak} such that $u^\frac{\gamma+1}{2}\in 
L^2(0, T ; H_0^1(\Omega))$ and $u\in 
L^2(t_0, T ; H_{\operatorname{loc}}^1(\Omega))\cap L^\infty(0,T;L^{1+\gamma}(\Omega))$ for each $t_0>0$.
\end{theorem}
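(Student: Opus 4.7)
}
The plan is to obtain $u$ as the monotone pointwise limit of the solutions $u_k$ of the approximating problem \cref{approx4} produced by \cref{approxexxist,ubase}, and then to upgrade the convergence via uniform a priori estimates tailored to the range $\gamma>1$. Since global energy estimates on $u_k$ itself are unavailable in this regime, I would split the analysis into one global estimate on $u_k^{(\gamma+1)/2}$ and one interior estimate on $\na u_k$.

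First I would test the weak formulation of \cref{approx4} with $u_k^{\gamma}\chi_{(0,t)}$, which is admissible because each $u_k$ is bounded and nonnegative by \cref{approxexxist}. The local diffusion produces $\frac{4\gamma}{(\gamma+1)^2}\iint_{\Omega_t}|\na u_k^{(\gamma+1)/2}|^2\,dx\,d\tau$, the nonlocal part is nonnegative and controls the Gagliardo seminorm of $u_k^{(\gamma+1)/2}$ via \cref{algebraic}(i), while the parabolic contribution equals $\frac{1}{\gamma+1}\int_\Omega u_k^{\gamma+1}(\cdot,t) - \frac{1}{\gamma+1}\int_\Omega u_{0k}^{\gamma+1}$. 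Because $u_k^\gamma/(u_k+\tfrac{1}{k})^\gamma\le 1$, the right-hand side is bounded by $\|f\|_{L^1(\Omega_T)}$, and using $u_{0k}\le u_0\in L^{\gamma+1}(\Omega)$ together with the supremum over $t\in(0,T]$ yields uniform bounds for $\{u_k^{(\gamma+1)/2}\}$ in $L^2(0,T;H_0^1(\Omega))$ and for $\{u_k\}$ in $L^\infty(0,T;L^{\gamma+1}(\Omega))$.

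For the local estimate, fix $t_0>0$ and $\omega\subset\subset\Omega$, choose $\omega\subset\subset\omega'\subset\subset\Omega$, and pick cutoffs $\eta\in\mathcal{C}_c^\infty(\omega')$ with $\eta\equiv 1$ on $\omega$ and $\xi\in\mathcal{C}^\infty([0,T])$ with $\xi\equiv 0$ on $[0,t_0/2]$, $\xi\equiv 1$ on $[t_0,T]$. Testing with $u_k\eta^2\xi$ is admissible. By \cref{approxexxist}(c) there is $c=c(\omega',t_0/2,n,s)>0$ with $u_k\ge c$ on $\omega'\times[t_0/2,T)$, and since $\gamma>1$ one has $u_k(u_k+\tfrac{1}{k})^{-\gamma}\le (u_k+\tfrac{1}{k})^{1-\gamma}\le c^{1-\gamma}$ on $\spt(\eta^2\xi)$. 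Hence the singular right-hand side is dominated by $c^{1-\gamma}\|f\|_{L^1(\Omega_T)}$. The parabolic and local diffusion terms produce $\frac{1}{2}\int_\Omega u_k^2(\cdot,T)\eta^2 - \frac{1}{2}\iint u_k^2\eta^2\xi'$ and $\iint|\na u_k|^2\eta^2\xi$, with the cross terms $\iint u_k\na u_k\cdot\na\eta\,\eta\xi$ and the cutoff nonlocal remainder absorbed via Young's inequality and the $L^\infty(0,T;L^{\gamma+1})$ bound (recalling $\gamma+1\ge 2$). This yields $\iint_{\omega\times(t_0,T)}|\na u_k|^2\le C$ uniformly in $k$.

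Finally, monotone convergence and the bound in $L^\infty(0,T;L^{\gamma+1}(\Omega))$ give $u_k\uparrow u$ strongly in $L^1(\Omega_T)$ with $u\in L^\infty(0,T;L^{\gamma+1}(\Omega))$, and weak lower semicontinuity transfers the regularity of Steps 1–2 to $u$, so that $u^{(\gamma+1)/2}\in L^2(0,T;H_0^1(\Omega))$ and $u\in L^2(t_0,T;H^1_{\loc}(\Omega))$ for every $t_0>0$. To verify the very weak identity of \cref{very weak}, take $\varphi\in\mathcal{C}_0^\infty(\Omega_T)$ with $\spt\varphi\subset\omega\times[t_0,T-t_0]$, integrate by parts in the energy form of \cref{approx4}, and pass to the limit: the left-hand side converges by dominated convergence thanks to $u_k\le u\in L^1_{\loc}$ and boundedness of $-\varphi_t-\De\varphi+(-\De)^s\varphi$, while on the singular right-hand side $f_k\varphi/(u_k+\tfrac{1}{k})^\gamma$ converges a.e.\ to $f\varphi/u^\gamma$ with integrable envelope $f|\varphi|/c^\gamma$ (again using \cref{approxexxist}(c)). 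Positivity of $u$ on compacts at every positive time follows from \cref{ubase}. The principal obstacle is Step 2: with $\gamma>1$ no global $\na u_k$ estimate is available, so the interior positivity from \cref{approxexxist}(c) must be paired with a careful cutoff argument that converts the $L^\infty(L^{\gamma+1})$ control into absorption of the cross terms produced by $\na\eta$ and by the nonlocal tail.
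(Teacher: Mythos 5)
Your Step 1 (testing with $u_k^\gamma\chi_{(0,t)}$ to get $\{u_k^{(\gamma+1)/2}\}_k$ bounded in $L^2(0,T;H_0^1(\Omega))$ and $\{u_k\}_k$ bounded in $L^\infty(0,T;L^{\gamma+1}(\Omega))$) and your limit passage are exactly the paper's argument. Where you genuinely diverge is the interior gradient estimate. You propose a parabolic Caccioppoli argument: test with $u_k\eta^2\xi$, control the singular right-hand side by $u_k(u_k+\tfrac1k)^{-\gamma}\le c^{1-\gamma}$ on the support of the cutoffs, and absorb the cross terms from $\nabla\eta$ and the nonlocal tail using the $L^\infty(0,T;L^{\gamma+1})$ bound. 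This can be made to work, but it forces you to carry out the full nonlocal cutoff machinery (splitting $\int_Q(u_k(x)-u_k(y))(u_k(x)\eta^2(x)-u_k(y)\eta^2(y))K\,dx\,dy$ into a coercive part, a near-diagonal commutator controlled by $\|\nabla\eta\|_\infty^2\int|x-y|^{2-n-2s}$, and a tail term), which you only gesture at. The paper avoids all of this: the global identity from Step 1 already contains $\gamma\iint_{\Omega_T}u_k^{\gamma-1}|\nabla u_k|^2\le C$ (since $\nabla u_k\cdot\nabla u_k^\gamma=\gamma u_k^{\gamma-1}|\nabla u_k|^2$) and, via \cref{algebraic}(iii), $\iint_K\int_K|u_k(x)-u_k(y)|^2|u_k(x)+u_k(y)|^{\gamma-1}K\,dx\,dy\le C$; because $\gamma>1$ and $u_k\ge c_{(K,t_0)}$ on $K\times[t_0,T)$, one simply divides by the weight $u_k^{\gamma-1}\ge c^{\gamma-1}$ to get the local $H^1$ and $H^s$ bounds with no cutoff functions at all. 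Your route buys nothing extra here (the conclusion is the same local bound), while the paper's observation that the weighted gradient estimate is already global makes the interior step essentially one line; if you keep your route, you must actually write out the nonlocal Caccioppoli estimate rather than assert it.
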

Now we consider the case $0<\gamma<1$, and state our results as follows:
\begin{theorem}{\label{mainth5}} Let $0<\gamma<1$. Assume that $u_0 \in L^{2}(\Omega)$ with $u_0\geq 0$ and $f \geq 0$ is such that\smallskip\\
$i)$ $f \in L^{\frac{2}{\gamma+1}}\left(0, T ; L^{\left(\frac{2^*}{1-\gamma}\right)^{\prime}}(\Omega)\right)$,
or\\
$ii)$ $f \in L^{\bar{m}}\left(\Omega_T\right)$ with $\bar{m}:=\frac{2(n+2 )}{2(n+2 )-n(1-\gamma)}$.\\
Then \cref{problem} has a very weak solution $u \in 
L^2(0, T ; H_0^1(\Omega))\cap L^{\infty}(0, T ; L^2(\Omega))$ in the sense of \cref{very weak}.
\end{theorem}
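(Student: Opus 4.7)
The plan is to obtain $u$ as the monotone limit of the approximating solutions $u_k$ to \cref{approx4}, whose existence, monotonicity, uniform local positivity, and boundedness are provided by \cref{approxexxist} and \cref{ubase}. The heart of the argument is to derive uniform bounds for the sequence $\{u_k\}_k$ in $L^2(0,T;H_0^1(\Omega))\cap L^\infty(0,T;L^2(\Omega))$, and this is done by testing \cref{approx4} with $u_k$ itself. Discarding the non-negative nonlocal contribution and using $0<\gamma<1$ together with $(u_k+\tfrac{1}{k})^{-\gamma}u_k\leq u_k^{1-\gamma}$, one arrives, for every $\tau\in(0,T]$, at
\begin{equation*}
\frac{1}{2}\int_\Omega u_k^2(x,\tau)\,dx+\int_0^\tau\int_\Omega|\nabla u_k|^2\,dx\,dt\leq \frac{1}{2}\|u_0\|_{L^2(\Omega)}^2+\int_0^\tau\int_\Omega f_k\,u_k^{1-\gamma}\,dx\,dt.
\end{equation*}
The whole game is then to bound the last integral by a sub-linear power of the left-hand side and absorb via Young.

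For hypothesis $i)$, Hölder's inequality in space with exponents $(2^*/(1-\gamma))'$ and $2^*/(1-\gamma)$ followed by the Sobolev embedding from \cref{Sobolev embedding}, and then Hölder in time with exponents $2/(\gamma+1)$ and $2/(1-\gamma)$, gives
\begin{equation*}
\int_0^\tau\!\!\int_\Omega f_k u_k^{1-\gamma}\,dx\,dt\leq C\,\|f\|_{L^{\frac{2}{\gamma+1}}(0,T;L^{(2^*/(1-\gamma))'}(\Omega))}\Big(\int_0^\tau\!\!\int_\Omega|\nabla u_k|^2\,dx\,dt\Big)^{\frac{1-\gamma}{2}},
\end{equation*}
and Young's inequality with conjugate exponents $2/(1-\gamma)$ and $2/(1+\gamma)$ closes the estimate. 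For hypothesis $ii)$, the parabolic interpolation $L^\infty(0,T;L^2(\Omega))\cap L^2(0,T;H_0^1(\Omega))\hookrightarrow L^{\frac{2(n+2)}{n}}(\Omega_T)$ (a Gagliardo-Nirenberg-type embedding obtained from \cref{gagliardo} together with \cref{Sobolev embedding}) is used. A direct computation shows that $\bar m'\,(1-\gamma)=\tfrac{2(n+2)}{n}$, so Hölder in $\Omega_T$ with exponents $\bar m$ and $\bar m'$ yields
\begin{equation*}
\int_0^\tau\!\!\int_\Omega f_k u_k^{1-\gamma}\,dx\,dt\leq \|f\|_{L^{\bar m}(\Omega_T)}\,\|u_k\|_{L^{\frac{2(n+2)}{n}}(\Omega_T)}^{1-\gamma},
\end{equation*}
and interpolating with $1-\gamma<2$ allows absorption of both the $L^\infty(L^2)$ and $L^2(H_0^1)$ norms by Young. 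In either case one concludes the uniform bound
\begin{equation*}
\sup_{k}\Big(\|u_k\|_{L^\infty(0,T;L^2(\Omega))}+\|\nabla u_k\|_{L^2(\Omega_T)}\Big)<\infty.
\end{equation*}

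The passage to the limit is then standard. Weak convergence in $L^2(0,T;H_0^1(\Omega))$, weak-$*$ convergence in $L^\infty(0,T;L^2(\Omega))$, and the pointwise monotone convergence $u_k\uparrow u$ from \cref{ubase} identify $u$ as the limit and place it in the claimed energy space. To pass to the limit in the singular right-hand side against a test function $\varphi\in\mathcal{C}_0^\infty(\Omega_T)$, one uses that $\operatorname{spt}\varphi\subset\subset\omega\times[t_0,T)$ for some $\omega\subset\subset\Omega$ and $t_0>0$, so the lower bound $u_k\geq C(\omega,t_0,n,s)>0$ from \cref{approxexxist} yields
\begin{equation*}
\Big|\frac{f_k\,\varphi}{(u_k+\tfrac{1}{k})^\gamma}\Big|\leq\frac{\|\varphi\|_{L^\infty}}{C(\omega,t_0,n,s)^\gamma}\,f\in L^1(\Omega_T),
\end{equation*}
and dominated convergence gives the identity in \cref{very weak}. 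The main technical obstacle is the correct exponent matching in case $ii)$: the choice $\bar m=\frac{2(n+2)}{2(n+2)-n(1-\gamma)}$ is precisely the critical Aronson-Serrin-type threshold that makes the Gagliardo-Nirenberg interpolation fit with the energy estimate, and verifying that Young's inequality actually closes with a power strictly less than $1$ is where the constraint $\gamma<1$ (strictly) is essential; the rest of the argument is a parabolic adaptation of the Boccardo-Orsina scheme.
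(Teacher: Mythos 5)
Your proposal is correct and follows essentially the same route as the paper: testing \cref{approx4} with $u_k\chi_{(0,\tau)}$, handling case $i)$ by H\"older in space, Sobolev, H\"older in time and Young, handling case $ii)$ via the computation $(1-\gamma)\bar m'=\tfrac{2(n+2)}{n}$ and the $L^\infty(L^2)\cap L^2(H^1_0)\hookrightarrow L^{2(n+2)/n}(\Omega_T)$ interpolation, and then passing to the limit using monotonicity and the uniform local lower bound to dominate the singular term. The only cosmetic difference is that you discard the nonnegative nonlocal term rather than retaining it in the a priori bound, which changes nothing.
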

\begin{remark} As $\gamma<1$, so
    \begin{equation*}
        \left(\frac{2^*}{1-\gamma}\right)^{\prime}=\frac{2 n}{2 n-(1-\gamma)(n-2 )}<\bar{m}<\frac{2}{\gamma+1} ,
    \end{equation*}
and the two spaces $L^{\frac{2}{\gamma+1}}\left(0, T ; L^{\left(\frac{2 ^*}{1-\gamma}\right)^{\prime}}(\Omega)\right)$ and $L^{\bar{m}}\left(\Omega_T\right)$ are not comparable. Also the case $\gamma=1$ cannot be considered here since in the proof of \cref{mainth5}, we will use Hölder inequality with exponents $\frac{2}{\gamma+1}$ and $\frac{2}{1-\gamma}$. 
If $\gamma\to1$, then $\bar{m}\to1$ and $\frac{2}{\gamma+1}$ and $\left(\frac{2^*}{1-\gamma}\right)^{\prime}
$ both tend to $1$, so that $f$ will belong to $L^1\left(\Omega_T\right)$.
\end{remark}
Now for $m<\bar{m}$, we no longer find solutions in $
L^2(0, T ; H_0^1(\Omega))$ but in a larger space depending on $m$.
\begin{theorem}{\label{mainth6}} Let $0<\gamma<1$. Assume that $0 \leq f \in L^m\left(\Omega_T\right)$, with $1 \leq m<\bar{m}$, and that $u_0 \in L^{2}(\Omega)$ be nonnegative. Then the problem \cref{problem} admits a very weak solution $u \in 
L^{\bar{q}}(0, T ; W_0^{1, \bar{q}}(\Omega)) \cap L^{\infty}(0, T ; L^{1+\gamma}(\Omega))$, with
\begin{equation*}
    \bar{q}=\frac{m(\gamma+1)(n+2 )}{n+2 -m(1-\gamma)} .
\end{equation*}
Moreover $u \in L^\sigma\left(\Omega_T\right)$, where
\begin{equation*}
    \sigma=\frac{m(\gamma+1)(n+2 )}{n-2 (m-1)} .
\end{equation*}
\end{theorem}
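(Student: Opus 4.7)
The plan is to extract $k$-uniform bounds from the approximating sequence $\{u_k\}$ of \cref{approxexxist} for \cref{approx4}, and then pass to the limit using the monotone convergence of \cref{ubase}. The whole argument hinges on a single tailored test function $\phi_k := (u_k+1/k)^{\mu}-(1/k)^{\mu}$, where the exponent $\mu$ is uniquely determined by the identity $(\mu-\gamma)\,m' = (\mu+1)(n+2)/n$, explicitly
\begin{equation*}
\mu \;=\; \frac{nm\gamma+(m-1)(n+2)}{n+2-2m}, \qquad \mu+1 \;=\; \frac{mn(\gamma+1)}{n+2-2m}.
\end{equation*}
The hypothesis $1\le m<\bar m$ is exactly what forces $\mu\in[\gamma,1)$ and $n+2-2m>0$, making $\phi_k$ a bounded, monotone, Lipschitz-in-$u_k$ admissible test function in the energy formulation of \cref{approx4}; the case $m=1$ reproduces $\mu=\gamma$.

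Inserting $\phi_k$ and integrating on $(0,t)$: the time term yields $\int_\Omega (u_k+1/k)^{\mu+1}/(\mu+1)$ modulo harmless $k^{-\mu}$-corrections and the datum $\|u_{0k}\|_{L^{\mu+1}}\lesssim \|u_0\|_{L^2}$ (since $\mu+1<2$ and $|\Omega|<\infty$); the local gradient term equals $\mu\iint_{\Omega_T} (u_k+1/k)^{\mu-1}|\nabla u_k|^2\,dxdt$, equivalent up to constants to $\|\nabla (u_k+1/k)^{(\mu+1)/2}\|_{L^2(\Omega_T)}^2$; and the nonlocal term $\iint \mathcal E(u_k,(u_k+1/k)^\mu)\,dt$ is nonnegative by \cref{algebraic}(i) and kept on the left. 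The singular right-hand side is controlled by $\iint f_k(u_k+1/k)^{\mu-\gamma} \le \|f\|_{L^m(\Omega_T)}\|u_k+1/k\|_{L^{(\mu-\gamma)m'}(\Omega_T)}^{\mu-\gamma}$. The parabolic Ladyzhenskaya interpolation (\cref{gagliardo} together with \cref{Sobolev embedding}) applied to $(u_k+1/k)^{(\mu+1)/2}\in L^\infty(0,T;L^2(\Omega))\cap L^2(0,T;H_0^1(\Omega))$ delivers $(u_k+1/k)\in L^{(\mu+1)(n+2)/n}(\Omega_T)=L^\sigma(\Omega_T)$. Since $(\mu-\gamma)m'=\sigma$ by construction and the resulting exponent $(\mu-\gamma)/(\mu+1)<1$, Young's inequality absorbs the right-hand side and produces $\|u_k\|_{L^\sigma(\Omega_T)}\lesssim 1$ together with $\|u_k\|_{L^\infty(0,T;L^{1+\gamma}(\Omega))}\lesssim 1$ (via $L^{\mu+1}\hookrightarrow L^{1+\gamma}$).

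To obtain the $L^{\bar q}$ gradient bound I would use the Boccardo-type factorisation
\begin{equation*}
|\nabla u_k|^{\bar q} \;=\; \bigl[(u_k+1/k)^{\mu-1}|\nabla u_k|^2\bigr]^{\bar q/2}\,(u_k+1/k)^{(1-\mu)\bar q/2},
\end{equation*}
and H\"older at the conjugate pair $(2/\bar q,\,2/(2-\bar q))$. The first factor is controlled by the previous step; a direct calculation shows $(1-\mu)\bar q/(2-\bar q)=\sigma$, which is precisely the algebraic identity pinning down $\bar q=m(\gamma+1)(n+2)/[n+2-m(1-\gamma)]$, so the second factor is bounded by the $L^\sigma$ estimate, and \cref{p} gives $\{u_k\}$ bounded in $L^{\bar q}(0,T;W_0^{1,\bar q}(\Omega))$. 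To conclude, the monotonicity $u_k\uparrow u$ from \cref{approxexxist} combined with the $L^\sigma$-bound upgrades to strong $L^\sigma(\Omega_T)$ convergence; weak compactness yields $\nabla u_k\rightharpoonup\nabla u$ in $L^{\bar q}(\Omega_T)$; and the strict positivity $u_k\ge u_1\ge c(\omega,t_0)>0$ on $\mathrm{supp}\,\varphi$ (from \cref{Harnack,ubase}) makes dominated convergence applicable in the singular term $\iint f_k\varphi/(u_k+1/k)^{\gamma}$ for $\varphi\in\mathcal C_0^\infty(\Omega_T)$. The main difficulty is the simultaneous algebraic closure: $\mu$ must solve both $(\mu-\gamma)m'=\sigma$ and $(1-\mu)\bar q/(2-\bar q)=\sigma$ with $\bar q$ as stated, and must lie in $[\gamma,1)$ throughout $1\le m<\bar m$; once these identities are verified the scheme closes and the remaining limit passage is routine.
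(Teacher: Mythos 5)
Your proposal is correct and follows essentially the same route as the paper: your exponent $\mu$ is exactly the paper's $\theta=\frac{(n+2)(m-1)+nm\gamma}{n-2(m-1)}$ (with $\mu=\gamma$ recovering the paper's separate case $m=1$), the energy estimate, the Gagliardo--Nirenberg/Sobolev interpolation with Young absorption yielding the $L^\sigma$ and $L^\infty(0,T;L^{1+\gamma})$ bounds, the Boccardo-type factorisation pinning down $\bar q$ via $(1-\theta)\bar q/(2-\bar q)=\sigma$, and the limit passage via monotonicity, weak compactness and local positivity are all identical to the paper's argument. The only cosmetic difference is that you test directly with $(u_k+1/k)^{\mu}-(1/k)^{\mu}$ rather than with $(u_k+\varepsilon)^{\theta}-\varepsilon^{\theta}$ followed by $\varepsilon\to0$.
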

\begin{remark}
    Observe that we can get rid of the fact that $u\in L^{\bar{q}}(0, T ; W_0^{s_1, \bar{q}}(\Omega))$ with $s_1<s$ as is done in [\citealp{abdellaoui1}, Theorem 11]. For our case, due to the presence of the leading Laplacian operator, we will get $u\in L^{\bar{q}}(0, T ; W_0^{s, \bar{q}}(\Omega))$.
    \end{remark}
    \begin{remark}Clearly $\bar{q} \geq m(\gamma+1)>1$ and $\sigma \geq m(\gamma+1)>1$. Also $m<\bar{m}$ is equivalent to $\bar{q}<2$ which implies $L^{2}\left(0, T ; H_0^1(\Omega)\right) \subset$ $L^{\bar{q}}(0, T ; W_0^{1, \bar{q}}(\Omega))$. In \cref{mainth6} the case $\gamma=1$ is not allowed since it yields $\bar{q}=2 m \geq 2$ which contradicts $\bar{q}<2$. 
\end{remark}
\subsubsection{\textbf{Further summability for $\gamma$ being a constant}} Here we state two results regarding the optimal summability of our solutions in terms of summability of the initial data $f\in L^r(0, T ; L^q(\Omega))$. For the $L^\infty$-boundedness, we take $u_0\in L^\infty(\Omega)$, and $1/r$ and $1/q$ are in the Aronson-Serrin domain, see \cite{AS,Peral}. We now state the corresponding theorem as:
\begin{theorem}{\label{mainth5.5}}
Assume that $f \in L^r(0, T ; L^q(\Omega))$ with $r, q$ satisfying
\begin{equation*}
\frac{1}{r}+\frac{n}{2 q }<1  , 
\end{equation*}
and suppose that $u_{0} \in L^{\infty}(\Omega)$. Then there exists a positive constant $c$ such that the unique finite energy solution of \cref{approx4} satisfies
\begin{equation*}
    \left\|u_k(x, t)\right\|_{L^{\infty}\left(\Omega_T\right)} \leq c
    ,
\end{equation*}
and the solution $u$ obtained as the limit of $u_k$ satisfies, 
$u\in L^\infty(\Omega_T)
$. Moreover for $\gamma\leq 1$, $u\in L^2(0,T;H^1_0(\Omega))$.
\end{theorem}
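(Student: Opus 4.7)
The plan is to follow the classical Stampacchia / Aronson-Serrin truncation scheme, adapted to the mixed local-nonlocal operator, and to handle the singularity by observing that on a sufficiently high super-level set of $u_k$ the singular nonlinearity is dominated by $f$ itself. Concretely, set $M_0:=\max\{\|u_0\|_{L^\infty(\Omega)},1\}$ and for $M\geq M_0$ use $(u_k-M)_+$ as a test function in the approximated problem \cref{approx4}, which is admissible since $u_k$ is a bounded energy solution by \cref{approxexxist}. The choice $M\geq\|u_0\|_\infty$ ensures that $(u_k-M)_+(\cdot,0)\equiv 0$, and on $\{u_k\geq M\}$ we have $u_k+\frac{1}{k}\geq M\geq 1$, so that
\begin{equation*}
\frac{f_k}{(u_k+\tfrac{1}{k})^{\gamma}}\,\chi_{\{u_k\geq M\}}\leq f_k\leq f.
\end{equation*}
Hence the singular term is effectively replaced by $f$ on the level set relevant for the iteration.

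Next, I would exploit the sign structure: $\nabla u_k\cdot\nabla(u_k-M)_+=|\nabla(u_k-M)_+|^2$, and, following the positivity argument used in the proof of \cref{exist1}, the nonlocal bilinear form applied to $(u_k,(u_k-M)_+)$ is bounded below by the same form applied to $((u_k-M)_+,(u_k-M)_+)\geq 0$, so it can be discarded. This yields the standard nonsingular parabolic energy inequality
\begin{equation*}
\tfrac12\sup_{t\in(0,T)}\|(u_k-M)_+(t)\|_{L^2(\Omega)}^2+\int_0^T\|\nabla(u_k-M)_+\|_{L^2(\Omega)}^2\,dt\leq\int_0^T\!\int_\Omega f(u_k-M)_+\,dx\,dt.
\end{equation*}
At this point the mixed operator plays no further role, and one runs the usual Stampacchia iteration for $-\Delta$: combining the parabolic interpolation $L^\infty(L^2)\cap L^2(H^1_0)\hookrightarrow L^\rho(L^\sigma)$ with $\frac{2}{\rho}+\frac{n}{\sigma}=\frac{n}{2}$ and a H\"older estimate of $\int f(u_k-M)_+$ using $f\in L^r(L^q)$ with $\frac{1}{r}+\frac{n}{2q}<1$, one obtains a recurrence of the form $|A_{M+h}|\leq Ch^{-\alpha}|A_M|^{1+\eta}$ with $\eta>0$, where $A_M$ is the super-level set of $u_k$ in a suitable sense. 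Stampacchia's lemma then gives a level $M_\infty$, independent of $k$, beyond which $|A_{M_\infty}|=0$, i.e.\ $\|u_k\|_{L^\infty(\Omega_T)}\leq c$ uniformly in $k$. By the monotonicity in \cref{approxexxist} and \cref{ubase}, $u_k\uparrow u$ a.e., whence $u\in L^\infty(\Omega_T)$.

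For the addendum $\gamma\leq 1$, I would test \cref{approx4} with $u_k$ itself (admissible since $u_k\in L^2(0,T;H^1_0(\Omega))$). Using the now-established uniform bound $u_k\leq c$,
\begin{equation*}
\frac{f_k u_k}{(u_k+\tfrac1k)^{\gamma}}\leq f_k(u_k+\tfrac1k)^{1-\gamma}\leq C f,
\end{equation*}
so the singular RHS is uniformly controlled by an $L^1$ function. Discarding the nonnegative nonlocal term gives
\begin{equation*}
\tfrac12\|u_k(T)\|_{L^2(\Omega)}^2+\int_0^T\|\nabla u_k\|_{L^2(\Omega)}^2\,dt\leq C\|f\|_{L^1(\Omega_T)}+\tfrac12\|u_0\|_{L^2(\Omega)}^2,
\end{equation*}
uniformly in $k$. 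Weak compactness then delivers $u\in L^2(0,T;H^1_0(\Omega))$. The main technical obstacle in the whole argument is Step~3, namely checking that the iteration machinery transfers verbatim to the mixed operator; but because the nonlocal form is only used through its nonnegativity on pairs of the form $(u_k,(u_k-M)_+)$ and is simply discarded, while the local Laplacian supplies the full Sobolev embedding needed for the parabolic interpolation, no new difficulty arises beyond carefully verifying admissibility of the truncated test function and bookkeeping the Aronson-Serrin exponents, exactly as in the purely nonlocal treatment of~\cite{Peral}.
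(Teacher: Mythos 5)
Your proposal follows essentially the same route as the paper: test with $(u_k-M)_+$ at a level $M\geq\max\{\|u_0\|_{L^\infty(\Omega)},1\}$ so that both the initial datum and the singular right-hand side are neutralized (since $u_k+\tfrac1k\geq 1$ on the super-level set), discard the nonnegative nonlocal form, and run the Aronson--Serrin/Stampacchia level-set iteration via Gagliardo--Nirenberg, finishing the $\gamma\leq1$ case by testing with $u_k$ and using the uniform bound. The only difference is bookkeeping: the paper splits $f(u_k-m)_+\leq f(u_k-m)_+^2+f$ and must therefore work on small time slices $[0,\tau],[\tau,2\tau],\dots$ to absorb the quadratic term, whereas you keep the linear term and iterate directly on $[0,T]$; both are standard variants of the same argument.
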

Outside the Aronsom-Serrin zone, the solutions are not expected to be bounded. We divide the region by the straight line $\frac{1}{r}=\frac{n}{n-2} \frac{1}{q}-\frac{2}{n-2}$ in two parts. Further, we take $u_0\equiv 0$. The following summability results we will get for this case.
\begin{theorem}{\label{mainth5.7}}
Assume $u_{0}(x) \equiv 0$ and $f \in L^r\left(0, T ; L^q(\Omega)\right)$, with $r>1, q>1$ satisfy
\begin{equation*}
    1<\frac{1}{r}+\frac{n}{2 q }.
\end{equation*}
Further, assume that for $\gamma<1$,\\
$i)$ if $\frac{1}{r}<\frac{n}{n-2} \frac{1}{q}-\frac{2}{n-2}$,  then $q> \left(\frac{2^*}{1-\gamma}\right)^\prime$, and\\
$ii)$ if $\frac{1}{r}\geq\frac{n}{n-2} \frac{1}{q}-\frac{2}{n-2}$, then $r>\frac{2}{1+\gamma}$.\\
Then there exists a positive constant $c$ such that the sequence of finite energy solutions of \cref{approx4} satisfies
\begin{equation*}
\left\|u_k\right\|_{L^{\infty}\left(0, T ; L^{2 \sigma}(\Omega)\right)}+\left\|u_k\right\|_{L^{2 \sigma}\left(0, T ; L^{2^* \sigma}\right)} \leq c,
\end{equation*}
where
\begin{equation*}
\sigma=\left\{\begin{array}{lll}
\frac{q(n-2 )(\gamma+1)}{2(n-2 q )} & \text { if } \frac{1}{r}<\frac{n}{n-2} \frac{1}{q}-\frac{2}{n-2}, \smallskip\\
\frac{q r n(\gamma+1)}{2(n r+2q-2 q r)} & \text { if } \frac{1}{r} \geq \frac{n}{n-2} \frac{1}{q}-\frac{2}{n-2}.
\end{array}\right.    
\end{equation*}
Further, the solution $u$ obtained as the limit of $u_k$ satisfies, 
$u\in L^\infty(0,T;L^{2\sigma}(\Omega))\cap L^{2\sigma}(0,T;L^{2^*\sigma}(\Omega))$.
\end{theorem}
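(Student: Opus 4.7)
The strategy is the Aronson--Serrin / Boccardo--Porzio--Primo type energy method, adapted to the singular setting by exploiting the approximation (\cref{approx4}) whose solutions $u_k$ are bounded, non-negative, and strictly positive on compact subsets of $\Omega_T$ by \cref{approxexxist}. Since $u_0\equiv 0$, we may take $(u_k+\tfrac{1}{k})^{2\sigma-1}-(\tfrac{1}{k})^{2\sigma-1}$, multiplied by $\chi_{(0,t)}$, as test function in (\cref{approx4}); after passing to the limit in the regularization of $(u_k+\tfrac{1}{k})^{2\sigma-1}$, the effective test function becomes $u_k^{2\sigma-1}\chi_{(0,t)}$ (which is admissible since $u_k$ is bounded and strictly positive on the support of the nonvanishing contributions). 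The main computation then yields, for every $t\in(0,T]$, the inequality
\begin{equation*}
\frac{1}{2\sigma}\int_\Omega u_k(x,t)^{2\sigma}\,dx+\frac{2\sigma-1}{\sigma^2}\int_0^t\!\!\int_\Omega |\nabla u_k^\sigma|^2\,dx\,d\theta+\frac{2\sigma-1}{\sigma^2}\int_0^t\!\mathcal{E}(u_k^\sigma,u_k^\sigma)\,d\theta\;\le\;\int_0^t\!\!\int_\Omega f\,u_k^{2\sigma-1-\gamma}\,dx\,d\theta,
\end{equation*}
where the local gradient term is produced by the chain rule and the nonlocal term by \cref{algebraic}(i) applied with $\alpha=2\sigma-1$, and the right-hand side uses $(u_k+\tfrac1k)^{-\gamma}\le u_k^{-\gamma}$ together with $f_k\le f$.

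Next, I would apply the classical Sobolev embedding (\cref{Sobolev embedding}) to the local gradient term, obtaining (after dropping the non-negative fractional contribution)
\begin{equation*}
\|u_k(\cdot,t)\|_{L^{2\sigma}(\Omega)}^{2\sigma}+C\int_0^t\|u_k\|_{L^{2^*\sigma}(\Omega)}^{2\sigma}\,d\theta\;\le\; C\int_0^t\!\!\int_\Omega f\,u_k^{2\sigma-1-\gamma}\,dx\,d\theta.
\end{equation*}
For the right-hand side, I would apply H\"older in space with exponents $(q,q')$ to bound it by $\|f(\cdot,\theta)\|_{L^q(\Omega)}\|u_k(\cdot,\theta)\|_{L^{(2\sigma-1-\gamma)q'}(\Omega)}^{2\sigma-1-\gamma}$, then interpolate
\begin{equation*}
\|u_k\|_{L^{(2\sigma-1-\gamma)q'}(\Omega)}\;\le\;\|u_k\|_{L^{2\sigma}(\Omega)}^{1-\lambda}\,\|u_k\|_{L^{2^*\sigma}(\Omega)}^{\lambda}
\end{equation*}
for an appropriate $\lambda\in[0,1]$ determined by $(2\sigma-1-\gamma)q'\in[2\sigma,2^*\sigma]$. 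Young's inequality then allows a portion of the $L^{2^*\sigma}$-norm to be absorbed into the left-hand side. The requirements $q>(2^*/(1-\gamma))'$ in Case (i) and $r>2/(1+\gamma)$ in Case (ii) are precisely what guarantees the interpolation exponents are admissible (and that $2\sigma-1-\gamma>0$ for the interpolation to be non-degenerate).

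The value of $\sigma$ in each case is \emph{forced} by the balance required to close the estimate. In Case (i) one takes $(2\sigma-1-\gamma)q'=2^*\sigma$, which solves to $\sigma=\tfrac{q(n-2)(\gamma+1)}{2(n-2q)}$; here the right-hand side is directly controlled by a power of $\|u_k\|_{L^{2^*\sigma}}$, and the time integration uses H\"older with exponent $r$ in $t$. In Case (ii) one chooses $\sigma$ so that after the interpolation and H\"older in time (with exponent $r$ applied to $f$), the remaining factor on $u_k$ matches the quantity $\|u_k\|_{L^\infty_tL^{2\sigma}}$, enabling absorption; solving the resulting linear relation in $\lambda$, $r$, $q$, and $\sigma$ yields exactly $\sigma=\tfrac{qrn(\gamma+1)}{2(nr+2q-2qr)}$. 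After absorption and a Gronwall-type argument in time, one concludes
\begin{equation*}
\|u_k\|_{L^\infty(0,T;L^{2\sigma}(\Omega))}+\|u_k\|_{L^{2\sigma}(0,T;L^{2^*\sigma}(\Omega))}\;\le\;c,
\end{equation*}
uniformly in $k$.

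Finally, since $\{u_k\}$ is increasing and converges pointwise a.e.\ to $u$ by \cref{ubase}, Fatou's lemma transfers both bounds to $u$. The main obstacle is the arithmetic bookkeeping in Case (ii): simultaneously balancing the space H\"older exponent $q'$, the interpolation parameter $\lambda$, the time H\"older exponent $r'$, and the target regularity $L^\infty_tL^{2\sigma}\cap L^{2\sigma}_tL^{2^*\sigma}$, while verifying that the restriction $1<\tfrac{1}{r}+\tfrac{n}{2q}$ (the complement of the Aronson--Serrin zone) ensures $\sigma$ is positive and that the interpolation is consistent with the admissibility conditions $q>(2^*/(1-\gamma))'$ or $r>2/(1+\gamma)$. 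One further technical point is justifying the use of $u_k^{2\sigma-1}$ as a test function when $\sigma<1$, which I would handle by truncating (replacing $u_k$ with $T_M(u_k)+\delta$ in the relevant power and letting $\delta\to 0$, $M\to\infty$) and using the strict positivity from \cref{Harnack} to avoid degeneracy.
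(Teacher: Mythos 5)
Your proposal follows essentially the same route as the paper: the same test function $u_k^{2\sigma-1}\chi_{(0,t)}$ (admissible since $2\sigma-1>1$ and $u_k$ is bounded), the same energy--Sobolev--H\"older chain, the same choice $(2\sigma-1-\gamma)q'=2^*\sigma$ in Case (i), and the same interpolation between $L^{2\sigma}$ and $L^{2^*\sigma}$ in Case (ii), with Fatou transferring the bounds to $u$. The only cosmetic difference is at the closing step of Case (ii): the paper does not use Gronwall but rather observes that the resulting exponents $\mu_1,\mu_2$ satisfy $\mu_1+\mu_2<r'$, so the inequality is of the self-improving form $A^{r'}\lesssim A^{\mu_1+\mu_2}+C$ and yields the uniform bound directly.
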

\begin{remark}
    In \cref{mainth5.7}, we observe that the conditions needed for $\gamma<1$ are obvious, as \cref{mainth5} implies that if $r=\frac{2}{1+\gamma}$ and $q= \left(\frac{2^*}{1-\gamma}\right)^\prime$, then by Sobolev embedding, $u\in L^2(0,T;L^{2^*}(\Omega))$ which matches with \cref{mainth5.7} as we get $\sigma=1$ in this case.
\end{remark}
\begin{remark}
    We note that, for $\gamma\geq 1$, the singularity allows us to get better summability results as we can go up to $r=1$ and $q=1$, which is not allowed for the nonsingular case (see \cite{Summability,Peral}), whereas for $\gamma<1$, the singularity gives us better summability for the spatial exponent $q$ only.
\end{remark}
\subsubsection{\textbf{Existence results for $\gamma$ being a function}} We now consider $\gamma$ to be a positive continuous function on $\overline{\Omega}_T$. We will mainly see the behaviour of $\gamma$ near the parabolic boundary, and accordingly, we will state two existence results. We recall the strip around the parabolic boundary given by $(\Omega_T)_\delta=\{(x,t)\in\Omega_T:\operatorname{dist}((x,t),\Gamma_T)<\delta\}$ for $\delta>0$, where $\Gamma_T=(\Omega\times\{t=0\})\cup(\partial\Omega\times(0,T))$.
\begin{theorem}{\label{mainth7}}
Let $\exists\delta>0$ 
such that $\gamma(x,t)\leq1$ in $(\Omega_T)_\delta$. Also let $u_0\in L^2(\Omega)$ with $u_0\geq0$ and $f\geq0$ satisfies
   \smallskip \\
$i)$ $f \in L^{2}\left(0, T ; L^{\left(\frac{2n}{n+2}\right)}(\Omega)\right)$,
or\smallskip\\
$ii)$ $f \in L^{\bar{r}}\left(\Omega_T\right)$ with $\bar{r}:=\frac{2(n+2 )}{n+4}$.\\
Then \cref{problemvari} has a very weak solution $u \in 
L^2(0, T ; H_0^1(\Omega))\cap L^{\infty}(0, T ; L^2(\Omega))$ in the sense of \cref{very weak variable}.
\end{theorem}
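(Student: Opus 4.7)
My plan is to obtain a very weak solution as the a.e.\ monotone limit of the approximating solutions $u_k$ to \cref{approxvari}, whose existence, monotonicity in $k$, boundedness, and uniform-in-$k$ positivity on sets of the form $\omega \times [t_0, T)$ are provided by \cref{approxexxist} and \cref{ubase}. The heart of the argument is to establish uniform estimates for $\{u_k\}$ in $L^{\infty}(0,T;L^2(\Omega)) \cap L^2(0,T;H_0^1(\Omega))$; once these are in hand, passage to the limit in the very weak formulation is standard.

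For the energy estimate I would test the $k$-th approximating equation with $u_k$ itself (justified via Steklov averaging as is done in \cref{existence2}) to obtain, for every $t \in (0,T]$,
\begin{equation*}
\frac{1}{2}\|u_k(\cdot,t)\|_{L^2(\Omega)}^2 + \int_0^t \int_{\Omega} |\nabla u_k|^2\,dx\,d\tau + \int_0^t \mathcal{E}(u_k,u_k)\,d\tau \leq \frac{1}{2}\|u_{0k}\|_{L^2(\Omega)}^2 + \int_0^t\!\!\int_{\Omega} \frac{f_k\,u_k}{(u_k + 1/k)^{\gamma(x,t)}}\,dx\,d\tau.
\end{equation*}
The crux is estimating the singular right-hand side. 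I split $\Omega_T = (\Omega_T)_\delta \cup \bigl(\Omega_T \setminus (\Omega_T)_\delta\bigr)$. Inside the strip, the hypothesis $0 < \gamma(x,t) \leq 1$ yields the pointwise bound $\frac{u_k}{(u_k + 1/k)^{\gamma(x,t)}} \leq (u_k + 1/k)^{1-\gamma(x,t)} \leq 1 + u_k$, since $1-\gamma \in [0,1)$. Outside the strip, \cref{approxexxist} gives $u_k \geq c(\delta) > 0$ uniformly in $k$, so $(u_k + 1/k)^{-\gamma(x,t)} \leq C(\delta, \gamma_*, \gamma^*)$. Both regions together contribute $\lesssim \int_{\Omega_T} f_k(1 + u_k)$.

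To close, in case (i) Hölder with exponents $2^*$ and $(2^*)' = \frac{2n}{n+2}$ in space, Sobolev's inequality, and Cauchy--Schwarz in time give
\begin{equation*}
\int_{\Omega_T} f_k\,u_k \lesssim \|f\|_{L^2(0,T;L^{2n/(n+2)}(\Omega))}\,\|\nabla u_k\|_{L^2(\Omega_T)},
\end{equation*}
and Young's inequality absorbs the gradient factor into the LHS. In case (ii) the parabolic Sobolev embedding $L^{\infty}(0,T;L^2(\Omega)) \cap L^2(0,T;H_0^1(\Omega)) \hookrightarrow L^{2(n+2)/n}(\Omega_T)$, whose conjugate exponent is precisely $\bar{r} = \frac{2(n+2)}{n+4}$, combined with Hölder and Young produces a bound absorbable into both terms on the LHS. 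Taking $\sup_{t \in (0,T]}$ yields the uniform $L^{\infty}(0,T;L^2) \cap L^2(0,T;H_0^1)$ estimate. Then by monotone and weak convergence $u_k \uparrow u$ in these spaces; for $\varphi \in \mathcal{C}_0^{\infty}(\Omega_T)$ with $\mathrm{supp}\,\varphi \subset\subset \omega \times [t_0, t_1]$, the uniform positivity $u_k \geq c_\varphi > 0$ on the support gives domination $\frac{f_k |\varphi|}{(u_k + 1/k)^{\gamma}} \leq C_\varphi f$, so dominated convergence passes the singular RHS to $\int \frac{f\varphi}{u^{\gamma(x,t)}}$, while the linear local/nonlocal terms pass by weak convergence and the time derivative by an integration by parts onto $\varphi$. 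The main obstacle is exactly the singular term inside the strip $(\Omega_T)_\delta$: because $u_k$ cannot be bounded below there, the hypothesis $\gamma \leq 1$ in $(\Omega_T)_\delta$ is precisely what makes $\frac{u_k}{(u_k+1/k)^{\gamma}}$ controlled by $1+u_k$, so that the Sobolev (or parabolic Sobolev) embedding closes the energy estimate; without it, the boundary behaviour would prevent absorption into the left-hand side.
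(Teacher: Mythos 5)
Your proposal is correct and follows essentially the same route as the paper: testing with $u_k\chi_{(0,\tau)}$, splitting the singular term between the strip $(\Omega_T)_\delta$ (where $\gamma\le 1$ gives $\frac{u_k}{(u_k+1/k)^{\gamma}}\le 1+u_k$) and its complement (where the uniform lower bound applies), then closing via Hölder--Sobolev--Young in case (i) and the parabolic interpolation $L^\infty L^2\cap L^2H^1_0\hookrightarrow L^{2(n+2)/n}$ in case (ii), followed by the standard monotone/weak limit with dominated convergence on the singular right-hand side. The only cosmetic difference is that the paper bounds the strip contribution by splitting into $\{u_k\le 1\}$ and $\{u_k\ge 1\}$ rather than using the one-line inequality $(u_k+1/k)^{1-\gamma}\le 1+u_k$.
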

\begin{remark}
    Since $\frac{2n}{n+2}<\frac{2(n+2 )}{n+4}=\bar{r}<2$, so the two spaces $ L^{2}\left(0, T ; L^{\left(\frac{2n}{n+2}\right)}(\Omega)\right)$ and $L^{\bar{r}}\left(\Omega_T\right)$ are not comparable. Also for the constant case (\cref{mainth5}) we got larger possible spaces $L^{\frac{2}{\gamma+1}}\left(0, T ; L^{\left(\frac{2 ^*}{1-\gamma}\right)^{\prime}}(\Omega)\right)$ and $L^{\bar{m}}\left(\Omega_T\right)$ for the belonging of initial data $f$, which gives us broader results, this is the cause of considering the constant and nonconstant cases separately.
\end{remark}
\begin{theorem}{\label{mainth8}}
    Assume that for some $\gamma^*>1$ and some $\delta>0$, we have $\|\gamma\|_{L^\infty((\Omega_T)_\delta)}<\gamma^*$. Assume that $u_0\in L^{\gamma^*+1}(\Omega)$ with $u_0\geq0$ and $f\geq0$ is such that 
   \smallskip \\
$i)$ $f \in L^{\gamma^*+1}\left(0, T ; L^{\left(\frac{n(\gamma^*+1)}{n+2\gamma^*}\right)}(\Omega)\right)$,
or\smallskip\\
$ii)$ $f \in L^{\tilde{r}}\left(\Omega_T\right)$ with $\tilde{r}:=\frac{(n+2 )(\gamma^*+1)}{n+2(\gamma^*+1)}$.\\
Then \cref{problemvari} admits a very weak nonnegative solution $u$ in the sense of \cref{very weak variable} such that $u^\frac{\gamma^*+1}{2}\in 
L^2(0, T ; H_0^1(\Omega))$ and $u\in 
L^2(t_0, T ; H_{\operatorname{loc}}^1(\Omega))\cap L^\infty(0,T;L^{1+\gamma^*}(\Omega))$ for each $t_0>0$.
\end{theorem}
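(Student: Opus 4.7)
The plan is to adapt the energy argument of Theorem \ref{mainth4} for constant $\gamma>1$, the new twist being a splitting of $\Omega_T$ into the parabolic-boundary strip $(\Omega_T)_\delta$, where $\gamma\le\gamma^*$ by hypothesis, and its complement, where the Harnack-type lower bound from Theorem \ref{approxexxist} gives uniform positivity. Let $u_k$ denote the approximants to \eqref{approxvari}. I would test the $u_k$-equation with $\varphi_k:=(u_k+1/k)^{\gamma^*}-(1/k)^{\gamma^*}$; this is admissible since $u_k\in L^2(0,T;H_0^1(\Omega))\cap L^\infty(\Omega_T)$ and $\varphi_k$ vanishes on $\partial\Omega$. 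The time derivative contributes the energy $\frac{1}{\gamma^*+1}\int_\Omega (u_k(\cdot,T)+1/k)^{\gamma^*+1}\,dx$ minus the initial data term, the local gradient term bounds below by $c\iint|\nabla (u_k+1/k)^{(\gamma^*+1)/2}|^2$, and by Lemma \ref{algebraic}(i) the nonlocal term bounds below by a matching multiple of the Gagliardo seminorm of $(u_k+1/k)^{(\gamma^*+1)/2}$.

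The right-hand side equals $\iint f_k(u_k+1/k)^{\gamma^*-\gamma(x,t)}\,dx\,dt$ up to a lower-order piece coming from $-(1/k)^{\gamma^*}$ that is bounded by $\iint f$. On the interior region $[(\Omega_T)_\delta]^c$, the lower bound $u_k\ge c(\delta)>0$ from Theorem~\ref{approxexxist} makes $(u_k+1/k)^{-\gamma(x,t)}$ uniformly bounded, so the integrand is dominated by $C f_k(u_k+1/k)^{\gamma^*}$. On $(\Omega_T)_\delta$, since $\gamma\le\gamma^*$ and $a^{\gamma^*-\gamma}\le 1+a^{\gamma^*}$ for $a\ge 0$, the same majorant applies (plus a controlled $\iint f_k$ term). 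Thus in both cases it suffices to bound $\iint f(u_k+1/k)^{\gamma^*}$. For case (ii), H\"older with the pair $(\tilde r,\tilde r')$ and a direct computation give $\tilde r'\cdot 2\gamma^*/(\gamma^*+1)=2(n+2)/n$, so this term equals $\|f\|_{L^{\tilde r}}\,\|v_k\|_{L^{2(n+2)/n}(\Omega_T)}^{2\gamma^*/(\gamma^*+1)}$ with $v_k:=(u_k+1/k)^{(\gamma^*+1)/2}$; parabolic Gagliardo--Nirenberg (interpolating $L^\infty(0,T;L^2)\cap L^2(0,T;L^{2^*})$) followed by Young with exponents $(\gamma^*+1)/\gamma^*,\gamma^*+1$ absorbs a small multiple of $\|v_k\|_{L^\infty(0,T;L^2)}^2+\|\nabla v_k\|_{L^2(\Omega_T)}^2$ into the LHS. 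Case (i) is analogous: H\"older in space first with $(q,q')$ chosen so that $q'\cdot 2\gamma^*/(\gamma^*+1)=2^*$, then Sobolev embedding, then H\"older in time with exponents $\gamma^*+1,(\gamma^*+1)/\gamma^*$, then Young.

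By monotonicity $u_k\uparrow u$ pointwise, and Fatou upgrades these uniform bounds to $u^{(\gamma^*+1)/2}\in L^2(0,T;H_0^1(\Omega))$ and $u\in L^\infty(0,T;L^{\gamma^*+1}(\Omega))$. To pass to the limit in the weak formulation against any $\varphi\in\mathcal{C}_0^\infty(\Omega_T)$, the uniform positivity $u_k\ge c(\supp\varphi)>0$ bounds the singular integrand pointwise by $Cf$, so dominated convergence applies; weak convergence of the local and nonlocal bilinear pieces is then standard. For the local regularity $u\in L^2(t_0,T;H^1_{\loc}(\Omega))$, I would use a Caccioppoli-type test $\eta^2 u_k\chi_{(t_0/2,\tau)}$ with $\eta\in\mathcal{C}_0^\infty(\Omega)$: on $\supp\eta\times(t_0/2,T)$ the singular denominator is uniformly bounded below, the local gradient term produces $\iint \eta^2|\nabla u_k|^2$, and the mixed and nonlocal-tail terms are controlled by the already-proven $L^\infty(0,T;L^{\gamma^*+1}(\Omega))$ bound on $u$.

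The principal difficulty is the splitting into boundary strip and interior: inside the strip, $u$ may vanish and we rely entirely on $\gamma\le\gamma^*$ to tame the singularity, while outside it is the Harnack positivity that does the work, regardless of the size of $\gamma$. Aligning both estimates with the single test function $(u_k+1/k)^{\gamma^*}$ and closing the loop via the parabolic interpolation/Young absorption is the key structural step beyond the constant-exponent argument of Theorem \ref{mainth4}.
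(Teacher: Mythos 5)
Your proposal follows essentially the same route as the paper: test with a power $\gamma^*$ of the approximant, split $\Omega_T$ into the strip $(\Omega_T)_\delta$ (where $\gamma\le\gamma^*$ tames the singularity via $a^{\gamma^*-\gamma}\le 1+a^{\gamma^*}$) and its complement (where the Harnack lower bound makes the denominator harmless), then close the estimate by H\"older--Sobolev interpolation and Young absorption in each of the two data regimes, and finally pass to the limit by monotonicity, Fatou, and dominated convergence. The one place you diverge is the proof that $u\in L^2(t_0,T;H^1_{\mathrm{loc}}(\Omega))$: you propose a Caccioppoli-type localized test $\eta^2u_k$, which would force you to control the nonlocal cross and tail terms generated by the cutoff, whereas the paper gets this for free from the global weighted bound $\iint u_k^{\gamma^*-1}|\nabla u_k|^2\le C$ already established, simply dividing by the interior lower bound $u_k\ge c(K,t_0)^{\,\gamma^*-1}$ on $K\times[t_0,T)$; the latter is shorter and avoids the tail estimates you would still need to write out carefully.
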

\begin{remark}
    Since $
        \frac{n(\gamma^*+1)}{n+2\gamma^*}<\frac{(n+2 )(\gamma^*+1)}{n+2(\gamma^*+1)}=\tilde{r}<\gamma^*+1,
    $ 
    so the two spaces $L^{\gamma^*+1}\left(0, T ; L^{\left(\frac{n(\gamma^*+1)}{n+2\gamma^*}\right)}(\Omega)\right)$ and $L^{\tilde{r}}\left(\Omega_T\right)$ are not comparable. Also, for the constant case (\cref{mainth4}), we got the largest possible space $L^1(\Omega_T)$ for the belonging of initial data $f$.
\end{remark}
\section{Proof of main results}
\subsection*{\textbf{Proof of \cref{mainth1}}}
We consider the approximated problems \cref{approx} in this case and show that $\left\{u_k\right\}_k$ is bounded in $L^{\infty}\left(\Omega_T\right)$. Note that the existence and other properties of $\{u_k\}_k$ follow by \cref{approxexxist}. We define $w_k=$ $H\left(u_k\right)=u_k^{\gamma+1}$, and note that as $u_k\in L^2(0,T;H^1_0(\Omega))$ is bounded, so $u_k^{\gamma+1}\in L^2(0,T;H^1_0(\Omega))$. Then, using Kato inequality as in \cref{Kato}, we get
\begin{equation*}
    \left(w_k\right)_t-\Delta w_k+(-\Delta)^s w_k \leq H^{\prime}(u_k)\left(\left(u_k\right)_t-\Delta u_k+(-\Delta)^s u_k\right) \leq(\gamma+1) f  \quad \text{ in weak sense}.
\end{equation*}
Now let $\vartheta$ be the unique solution to the problem
\begin{eqnarray*}
\begin{cases}
\vartheta_t-\Delta \vartheta+(-\Delta)^s \vartheta=(\gamma+1)f & \text { in } \Omega_T, \\ \vartheta=0 & \text { in }(\mathbb{R}^n \backslash \Omega) \times(0, T), \\ \vartheta(x, 0)=H(u_0(x)) & \text { in } \Omega .
\end{cases}
\end{eqnarray*}
As $f$ and $u_0$ are bounded, so by \cref{bounded}, $\vartheta\in L^\infty(\Omega_T)$ and by comparison principle $w_k \leq \vartheta$ for all $k$. Hence, $u_k^{\gamma+1} \leq \vartheta$ and the claim follows.
\smallskip\\Now since $u_k\in L^\infty(\Omega_T)\cap L^2(0,T; H^1_0(\Omega))$ and nonnegative, then for any $\varepsilon>0$ and $\theta>0$, $(\left(u_k(x, t)+\varepsilon\right)^\theta-\varepsilon^\theta)\in L^2(0,T; H^1_0(\Omega))$. So choosing $0<\varepsilon<1/k
$, for $t\in(0,T]$, we take $(\left(u_k(x, \theta)+\varepsilon\right)^\gamma-\varepsilon^\gamma) \chi_{(0, t)}$ as a test function in \cref{approx}, and it holds that
\begin{equation*}
    \begin{array}{l}
        \int_0^t \int_{\Omega}\left(u_k\right)_t\left((u_k+\varepsilon))^\gamma-\varepsilon^\gamma\right) d x d \theta +\int_0^t\int_\Omega \nabla u_k\cdot\nabla \left((u_k+\varepsilon)^\gamma-\varepsilon^\gamma\right)d x d \theta\smallskip\\
 +\frac{1}{2} \int_0^t \int_Q \frac{(u_k(x, \theta)-u_k(y, \theta))((u_k(x, \theta)+\varepsilon)^\gamma-\left(u_k(y, \theta)+\varepsilon\right)^\gamma)}{|x-y|^{n+2 s}} d x d y d \theta\smallskip\\
\leq \iint_{\Omega_t} f(x, \theta)
d x d \theta .
    \end{array}
\end{equation*}
Now letting $\varepsilon \to 0$, by Fatou's lemma, we get for all $t \leq T$
\begin{equation}{\label{takesup}}
    \begin{array}{l}
          \frac{1}{\gamma+1} \int_{\Omega} u_k^{\gamma+1}(x, t) d x+\int_0^t\int_\Omega \nabla u_k\cdot\nabla u_k^{\gamma}d x d\theta\smallskip\\+\frac{1}{2} \int_0^t \int_Q \frac{\left(u_k^\gamma(x, \theta)-u_k^\gamma(y, \theta)\right)\left(u_k(x, \theta)-u_k(y, \theta)\right)}{|x-y|^{n+2 s}} d x d y d \theta\smallskip \\
  \leq \iint_{\Omega_t} f d x d \theta+\frac{1}{\gamma+1} \int_{\Omega} u_0^{\gamma+1}(x) d x
  \leq \|f\|_{L^\infty(\Omega_T)}|\Omega_T|+\frac{1}{\gamma+1}\|u_0\|_{L^\infty(\Omega)}^{\gamma+1}|\Omega|.
    \end{array}
\end{equation}
Since we know $
    \nabla u_k\cdot\nabla u_k^{\gamma}=\gamma u_k^{\gamma-1}|\nabla u_k|^2,$
and by \cref{algebraic}
\begin{equation*}
    \left(u_k^\gamma(x, \theta)-u_k^\gamma(y, \theta)\right)\left(u_k(x, \theta)-u_k(y, \theta)\right) \geq C(\gamma)\left(u_k^{\frac{\gamma+1}{2}}(x, \theta)-u_k^{\frac{\gamma+1}{2}}(y, \theta)\right)^2,
\end{equation*}
we get taking supremum over $t\in (0,T]$ in \cref{takesup}, that the sequence 
$\left\{u_k^{\frac{\gamma+1}{2}}\right\}_k$ is bounded in $L^{\infty}(0, T, L^2(\Omega)) \cap L^2(0, T ; X_0^s(\Omega))
\cap L^2(0, T ; H_0^1(\Omega))\equiv L^{\infty}(0, T, L^2(\Omega)) 
\cap L^2(0, T ; H_0^1(\Omega))$.\smallskip
\\Now by \cref{approxexxist} and \cref{ubase}, as the sequence $\left\{u_k\right\}_k$ is increasing in $k$, so the pointwise limit $u$ of $\{u_k\}_k$ exists; and satisfies $u(\cdot,0)=u_0(\cdot)$ and $u=0$ in $(\mathbb{R}^n\backslash\Omega)\times(0,T)$. Also since $\left\{u_k^{\frac{\gamma+1}{2}}\right\}_k$ is bounded in $
L^2(0, T ; H_0^1(\Omega))
$, therefore $u_k^{\frac{\gamma+1}{2}}\rightharpoonup u^{\frac{\gamma+1}{2}}$ in $L^2(0,T;H^1_0(\Omega))$. Again by Beppo Levi theorem $u_k\rightarrow u$ in $L^{1}(\Omega_T)$. Using Fatou's Lemma, we have $u^{\frac{\gamma+1}{2}} \in L^{\infty}(0, T ; L^2(\Omega)) 
\cap L^{\infty}(\Omega_T)$. Also, by \cref{ubase} we get $u(x, t) \geq C\left(\omega, t_0,n,s\right) $ in $\omega \times\left[t_0, T\right)$ for any $\omega\subset\subset\Omega$ and $t_0>0$. Using this positivity, it is then easy to show that $u$ is a very weak solution in the sense of \cref{very weak} and is shown in detail in the proof of \cref{mainth2}.\smallskip\\
In order to show that $u\in \mathcal{C}([0,T],L^2(\Omega))$, we fix $k\geq l$ and hence $u_k\geq u_l$ and note that the function $\tilde{u}=(u_k+\frac{1}{k})-(u_l+\frac{1}{l})\in L^2(0,T;H^1(\Omega))$ satisfies the equation 
  \begin{equation*}
    \begin{array}{lcr}
\begin{cases}
\tilde{u}_t-\Delta \tilde{u}+(-\Delta)^s\tilde{u}=\frac{f(x,t)}{(u_k+\frac{1}{k})^\gamma}-\frac{f(x,t)}{(u_l+\frac{1}{l})^\gamma}, 
&  \mbox{ in } \Omega_T
,\\
\tilde{u}=\frac{1}{k}-\frac{1}{l}, &  \mbox{ in } (\mathbb{R}^n\backslash \Omega) \times (0,T),\smallskip\\
\tilde{u}(x,0)=\frac{1}{k}-\frac{1}{l},  & \mbox{ in } \Omega;
\end{cases}
\end{array}
\end{equation*}
Now as $\left(\frac{f}{(u_k+\frac{1}{k})^\gamma}-\frac{f}{(u_l+\frac{1}{l})^\gamma}\right) \leq 0$ in the set $\{\tilde{u} \geq 0\}$, using \cref{Kato,katore}, it holds that
\begin{equation*}
(\tilde{u}_+)_t-\Delta (\tilde{u}_+)+(-\Delta)^s (\tilde{u}_+) \leq 0.    
\end{equation*}
Again we notice that $\tilde{u}_+\in L^2(0,T;H^1_0(\Omega))$, has finite energy and 
$\tilde{u}_+(x,0)= 0$, therefore by comparison principle, it holds that $\tilde{u}_+\equiv 0$, and then we have $(u_k+\frac{1}{k})\leq(u_l+\frac{1}{l})$ for $k\geq l$. Therefore we get $0\leq u_k-u_l\leq \frac{1}{l}-\frac{1}{k}$, for $k\geq l$. This implies that the sequence $\{u_k\}_k$ is Cauchy in $\mathcal{C}([0,T],L^2(\Omega))$ and hence $u\in \mathcal{C}([0,T],L^2(\Omega)) $.
\smallskip\\We now consider  that $\gamma \leq 1$, then using $u_k \chi_{(0, t)}$ as a test function in \cref{approx}, we get 
that
\begin{equation*}
    \begin{array}{l}
\frac{1}{2} \int_{\Omega} u_k^2(x, t) d x+\int_0^t\int_\Omega |\nabla u_k|^2d x d\theta+\frac{1}{2} \int_0^t \int_Q \frac{\left(u_k(x, \theta)-u_k(y, \theta)\right)^2}{|x-y|^{n+2 s}} d x d y d \theta
\smallskip\\\leq \iint_{\Omega_{\mathrm{t}}} f u_k^{1-\gamma} d x d \theta+\frac{1}{2} \int_{\Omega} u_0^2(x) d x.
    \end{array}
\end{equation*}
Since $\left\{u_k\right\}_k$ is uniformly bounded in $L^{\infty}\left(\Omega_T\right)$, and $\gamma \leq 1$, so $
     \iint_{\Omega_t} f u_k^{1-\gamma} d x d \theta \leq C\|f\|_{L^\infty\left(\Omega_T\right)},$ 
 and then we conclude that $\left\{u_k\right\}_k$ is bounded in the space $
L^2(0, T ; H_0^1(\Omega)) \cap L^{\infty}(\Omega_T)$. We note that the same conclusion holds if $\operatorname{Supp}(f) \subset \subset \Omega_T$ taking into consideration that $u_k \geq u_1 \geq C$ in $\operatorname{Supp}(f)$ for each $k$.
\subsection*{\textbf{Proof of \cref{mainth2}}}
We consider here the approximating problem \cref{approx4} and refer \cref{approxexxist} for properties of $u_k$. As $f_k$ and $u_{0k}$ are bounded and nonnegative, therefore 
$u_k\in L^2(0,T; H^1_0(\Omega))$ is bounded and nonnegative, so we can choose the same test function $(\left(u_k(x, \theta)+\varepsilon\right)^\gamma-\varepsilon^\gamma) \chi_{(0, t)}, 0<\varepsilon<1/k,$ in \cref{approx4} also, and let $\varepsilon\to 0$ to get
\begin{equation*}
    \begin{array}{l}
\quad        \frac{1}{\gamma+1} \int_{\Omega} u_k^{\gamma+1}(x, t) d x+\int_0^t\int_\Omega \nabla u_k\cdot\nabla u_k^{\gamma}d x d\theta\smallskip\smallskip\\+\frac{1}{2} \int_0^t \int_Q \frac{\left(u_k^\gamma(x, \theta)-u_k^\gamma(y, \theta)\right)\left(u_k(x, \theta)-u_k(y, \theta)\right)}{|x-y|^{n+2 s}} d x d y d \theta \smallskip\smallskip\\\leq \iint_{\Omega_t} f_k d x d \theta+\frac{1}{\gamma+1} \int_{\Omega} u_{0k}^{\gamma+1}(x) d x\leq\|f\|_{L^1\left(\Omega_T\right)}+\frac{1}{\gamma+1}\left\|u_0\right\|_{L^{\gamma+1}\left(\Omega\right)}.
    \end{array}
\end{equation*}
Similarly like the proof of \cref{mainth1}, it holds that $\left\{u_k^{\frac{\gamma+1}{2}}\right\}_k$ is bounded in $
L^{\infty}(0, T ; L^2(\Omega)) 
\cap  L^2(0, T ; H_0^1(\Omega))$.\\ Now as $\left\{u_k\right\}_k$ is increasing in $k$, we get by Fatou's Lemma and Beppo Levi's Lemma, that the pointwise limit (as mentioned in \cref{ubase}) $u$ satisfies $u^{\frac{\gamma+1}{2}} \in L^{\infty}(0, T; L^2(\Omega)) 
\cap  L^2(0, T ; H_0^1(\Omega))$ and $u_k \uparrow u$ strongly in 
$L^1(\Omega_T)$. Since $u^{\frac{\gamma+1}{2}} \in   L^2(0, T ; H_0^1(\Omega))$ and $\frac{n(1+\gamma)}{n-2}=\frac{2^*(1+\gamma)}{2}$, embedding result on $u^{\frac{\gamma+1}{2}}$ implies $u\in L^2\left(0,T;L^{\left(\frac{n(1+\gamma)}{n-2}\right)}(\Omega)\right)$. Also, we have $u(x, t) \geq$ $C\left(\omega,t_0,n,s\right) $ in $\omega \times\left[t_0, T\right)$ for any $\omega\subset\subset\Omega$ and $t_0>0$. We show that $u$ is a very weak solution to \cref{problem} in the sense of \cref{very weak}. Let us take arbitrary $\phi \in \mathcal{C}_0^{\infty}(\Omega_T)$, we then have
\begin{equation*}
\iint_{\Omega_T}(\left(u_k\right)_t-\Delta u_k+(-\Delta)^s u_k) \phi\,
d x d t=\iint_{\Omega_T} \frac{f_k}{(u_k+\frac{1}{k})^\gamma} \phi d x d t .
\end{equation*}
Now $\phi \in \mathcal{C}_0^{\infty}(\Omega_T)$ implies $ \phi_t, \Delta \phi$ and $(-\Delta)^s\phi$ all are bounded. Then as $u_k \rightarrow u$ strongly in $L^1\left(\Omega_T\right)$, we have 
\begin{equation*}
    \begin{array}{l}
\quad\iint_{\Omega_T}(\left(u_k\right)_t-\Delta u_k+(-\Delta)^s u_k) \phi\, d x d t \smallskip\smallskip\\=\iint_{\Omega_T} u_k(-(\phi)_t-\Delta \phi+(-\Delta)^s \phi)d x d t \rightarrow \iint_{\Omega_T} u(-(\phi)_t-\Delta \phi+(-\Delta)^s \phi) d x d t .
    \end{array}
\end{equation*}
Now since for all $\omega \subset \subset \Omega $ and for all $t_0>0,$ $ u_k(x, t) \geq C\left(\omega, t_0,n,s\right) $ in $\omega \times\left[t_0, T\right)$, we get the positivity of $\{u_k\}_k$ as $u_k(x, t) \geq C$ in $\operatorname{Supp} \phi$ for all $k \geq 1$. Hence, we can use the dominated convergence theorem to obtain 
\begin{equation*}
    \iint_{\Omega_T} \frac{f_k}{(u_k+\frac{1}{k})^\gamma} \phi\, d x d t \rightarrow \iint_{\Omega_T} \frac{f}{u^\gamma} \phi \,d x d t \text { as } k \rightarrow \infty .
\end{equation*}
Thus
\begin{equation*}
    \iint_{\Omega_T}(u_t-\Delta u+(-\Delta)^s u) \phi\, d x d t=\iint_{\Omega_T} \frac{f}{u^\gamma} \phi\, d x d t.
\end{equation*}
\subsection*{\textbf{Proof of \cref{mainth3}}}
Here we consider \cref{approx4} but with suffix $m\in \mathbb{N}$ as
\begin{equation}{\label{approx2}}
\begin{array}{lcr}
\begin{cases}
(u_m)_t-\Delta u_m+(-\Delta)^su_m=\frac{f_m(x,t)}{(u_m+\frac{1}{m})^\gamma} 
 & \mbox{ in } \Omega_T:=\Omega \times (0,T),\\
u_m=0 &  \mbox{ in } (\mathbb{R}^n\backslash \Omega) \times (0,T),\\
u_m(x,0)=u_{0m}(x)  & \mbox{ in } \Omega.
\end{cases}
\end{array}
\end{equation}
The properties of $u_m$'s follow from \cref{approxexxist}. As $u_m\in 
L^2(0,T; H^1_0(\Omega))$ is nonnegative and $T_k(u_m)$ is bounded, so using $\left(\left(T_k\left(u_m\right)+\varepsilon\right)^\gamma-\varepsilon^\gamma\right) \chi_{(0, t)}$ 
as a test function in \cref{approx2}, we get 
\begin{equation*}
    \begin{array}{l}
\quad\int_0^t\int_{\Omega}\left(u_m\right)_t\left((T_k(u_m)+\varepsilon)^\gamma-\varepsilon^\gamma\right) d x d \theta +\int_0^t\int_\Omega \nabla u_k\cdot\nabla \left((T_k(u_m)+\varepsilon)^\gamma-\varepsilon^\gamma\right)d x d \theta\smallskip\\
 +\frac{1}{2} \int_0^t \int_Q \frac{(u_k(x, \theta)-u_k(y, \theta))((T_k(u_m(x, \theta))+\varepsilon)^\gamma-\left(T_k(u_m(y, \theta))+\varepsilon\right)^\gamma)}{|x-y|^{n+2 s}} d x d y d \theta
\leq \iint_{\Omega_T} f(x, \theta) d x d \theta .
    \end{array}
\end{equation*}
We have chosen $0<\varepsilon<1/k$ arbitrarily in above. Now letting $\varepsilon\to 0$, by Fatou's lemma, we get for all $t \leq T$
\begin{equation*}
    \begin{array}{l}
         \quad\int_{\Omega} L_k\left(u_m(x, t)\right) d x +\int_0^t\int_\Omega \nabla u_m\cdot\nabla T_k^{\gamma}(u_m)d x d\theta\\
+\frac{1}{2} \int_0^t \int_Q \frac{\left(T_k^\gamma\left(u_m(x, \theta)\right)-T_k^\gamma\left(u_m(y, \theta)\right)\right)\left(u_m(x, \theta)-u_m(y, \theta)\right)}{|x-y|^{n+2 s}} d x d y d \theta \\
\leq\|f\|_{L^1\left(\Omega_T\right)}+C_3(k)\left\|u_0\right\|_{L^1(\Omega)}+C_4(k)|\Omega|,
    \end{array}
\end{equation*}
where $L_k(\rho)=\int_0^\rho\left(T_k(\xi)\right)^\gamma d \xi$. Notice that
\begin{equation*}
    \begin{array}{l}
\left(T_k^\gamma\left(u_m(x, \theta)\right)-T_k^\gamma\left(u_m(y, \theta)\right)\right)\left(u_m(x, \theta)-u_m(y, \theta)\right)\smallskip\\\geq \left(T_k^\gamma\left(u_m(x, \theta)\right)-T_k^\gamma\left(u_m(y, \theta)\right)\right)\left(T_k\left(u_m(x, \theta)\right)-T_k\left(u_m(y, \theta)\right)\right)   
    \end{array}
\end{equation*}
and so\begin{equation*}
\nabla u_m\cdot\nabla T_k^\gamma(u_m)\geq\gamma T_k^{\gamma-1}(u_m)|\nabla T_k(u_m)|^2    
\end{equation*}
and for $\rho>0$, trivial calculation yields that
\begin{equation*}
    C_1(k) \rho-C_2(k) \leq L_k(\rho) \leq C_3(k) \rho+C_4(k) 
    \text { and } L_k(\rho) \geq C\left(T_k(\rho)\right)^{\gamma+1} ,
\end{equation*}
where $C_1, C_2, C_3, C_4, C$ are positive constants depending only on $k$. Therefore $\left\{\left(T_k\left(u_m\right)\right)^{\frac{\gamma+1}{2}}\right\}_m$ is bounded in $L^{\infty}(0, T ; L^2(\Omega))
\cap L^2(0, T; H_0^1(\Omega))$ and $\left\{u_m\right\}_m$ is bounded in $L^{\infty}(0, T ; L^1(\Omega))$. \smallskip\\Now as $\left\{u_m\right\}_m$ is increasing in $m$, by Fatou's Lemma and Beppo Levi's theorem, the pointwise limit $u$ of $\{u_m\}_m$ (as mentioned in \cref{ubase}) satisfies $\left(T_k(u)\right)^{\frac{\gamma+1}{2}} \in L^{\infty}(0, T ; L^2(\Omega))
\cap L^2(0, T ; H_0^1(\Omega))$, $u_m\uparrow u$ strongly in $L^1(\Omega_T)$. Also $u(\cdot,0)=u_0(\cdot)$ in $L^1$ sense and $u$ is $0$ outside $\Omega$. Then $u$ can be shown to be a very weak solution of \cref{problem} in the sense of \cref{very weak}, and the proof is same as that of \cref{mainth2}.
\begin{remark}
    In view of \cref{mainth1}, if we consider the approximating problem 
\begin{equation*}
\begin{array}{lcr}
\begin{cases}
(u_k)_t-\Delta u_k+(-\Delta)^su_k=\frac{f_k(x,t)}{u_k^\gamma} 
 & \mbox{ in } \Omega_T
 ,\\
u_k=0 &  \mbox{ in } (\mathbb{R}^n\backslash \Omega) \times (0,T),\\
u_k(x,0)=u_{0k}(x)  & \mbox{ in } \Omega;
\end{cases}
\end{array}
\end{equation*}
for $\gamma\leq 1$, then for each $k$, $u_k$ exists and $u_k\in L^2(0,T;H^1_0(\Omega))\cap \mathcal{C}([0,T],L^2(\Omega))$. Also, each $u_k$ is unique (see \cref{uniqueness}). Further, the sequence $\{u_k\}_k$ satisfy the followings:    \\(a) each $u_k$ is bounded and nonnegative,\\
    (b) the sequence $\{u_k\}_k$ is increasing in $k$ (can be shown similarly like \cref{comparison}),\\
    (c) for each $t_0>0$ and $\omega\subset\subset\Omega_T$, $\exists C(\omega,t_0,n,s)$ such that $u_k\geq C(\omega,t_0,n,s)$.\smallskip\\
    Now for $\gamma\leq 1$, we have $(u_k+\varepsilon)^\gamma-\varepsilon^\gamma\leq u_k^\gamma$, and hence $\frac{f_k((u_k+\varepsilon)^\gamma-\varepsilon^\gamma)}{u_k^\gamma}\leq f_k$. Therefore we can 
    follow the same procedure as that of \cref{mainth2} to get that the pointwise limit $u$ of $\{u_k\}_k$ is a very weak solution of \cref{problem} and satisfies the corresponding properties of \cref{mainth2}. Now let $k \geq l$ be fixed, then $u_k \geq u_l$ and
\begin{equation*}
    \left(u_k-u_l\right)_t-\Delta (u_k-u_l)+(-\Delta)^s\left(u_k-u_l\right)=\frac{f_k(x, t)}{u_k^\gamma}-\frac{f_l(x, t)}{u_l^\gamma}\leq \frac{f_k-f_l}{u_k^\gamma}
    \text { in } \Omega_T .
\end{equation*}
Hence using $\left((u_k-u_l+\varepsilon)^\gamma-\varepsilon^\gamma\right)\chi_{(0,t)}$, $0<\varepsilon<<1$ as a test function in the above inequality and letting $\varepsilon\to 0$ it holds that
\begin{equation*}
    \begin{array}{l}
         \frac{1}{\gamma+1} \int_{\Omega}\left(u_k(x, t)-u_l(x, t)\right)^{\gamma+1} d x 
\leq \iint_{\Omega_t}\left(f_k-f_l\right) d x d \theta+\frac{1}{\gamma+1} \int_{\Omega}\left(u_{0 k}(x)-u_{0 l}(x)\right)^{\gamma+1}(x) d x .
    \end{array}
\end{equation*}
Now as by Dominated Convergence Theorem, $f_k \uparrow f$ strongly in $L^1(\Omega_T)$ and $u_{0 k} \uparrow u_0$ strongly in $L^{\gamma+1}(\Omega)$ if $u_0\in L^{\gamma+1}(\Omega)$, we get that the sequence $\left\{u_k\right\}_k$ is a Cauchy sequence in the space $\mathcal{C}([0,T];L^{\gamma+1}(\Omega))$ and hence in $\mathcal{C}([0,T];L^{1}(\Omega))$. Therefore this approximation allows us to have $u\in \mathcal{C}([0,T],L^1(\Omega))$.
\\
Note that the same approximation technique will not work for $\gamma>1$, as in that case we will have $u_k^\frac{\gamma+1}{2}\in 
L^2(0, T ; H_0^1(\Omega))$ which may not give us $u_k\in L^2(0, T ; H_0^1(\Omega))$ and the monotonicity of $\{u_k\}_k$ cannot be proven in similar way like \cref{comparison}. However, we will take approximations as \cref{approx4} for convenience even for $\gamma\leq 1$. 
\smallskip\\
Further, this approximation technique will not work for $u_0\in L^1(\Omega)$, as in that case, we need to take test functions like $\left((T_m\left(u_k-u_l)+\varepsilon\right)^\gamma-\varepsilon^\gamma\right)\chi_{(0, t)}$ and will end up with $\int_{\Omega} L_m\left(u_k(x, t)-u_l(x, t)\right) d x$ in the left which will not give us anything desired.
\end{remark}
\subsection*{\textbf{Proof of \cref{mainth4}}}
We consider the approximated problems \cref{approx4} here and refer \cref{approxexxist}. Since each $u_k\in L^2(0,T; H^1_0(\Omega))$ is bounded and $\gamma >1$ so we can use $u_k^\gamma \chi_{(0, t)}$ 
as a test function in \cref{approx4}, to get that, for all $t\leq T$,
\begin{equation}{\label{energy}}
\begin{array}{l}
 \quad\frac{1}{\gamma+1} \int_{\Omega} u_k^{\gamma+1}(x, t) d x+\int_0^t\int_\Omega \nabla u_k\cdot\nabla u_k^{\gamma}d x d\theta\smallskip\\\quad+\frac{1}{2} \int_0^t \int_Q \frac{\left(u_k^\gamma(x, \theta)-u_k^\gamma(y, \theta)\right)\left(u_k(x, \theta)-u_k(y, \theta)\right)}{|x-y|^{n+2 s}} d x d y d \theta \smallskip\\
\leq \iint_{\Omega_t} f d x d \theta+\frac{1}{\gamma+1} \int_{\Omega} u_{0k}^{\gamma+1}(x) d x \smallskip\\\leq\|f\|_{L^1\left(\Omega_T\right)}+\frac{1}{\gamma+1}\left\|u_0\right\|_{L^{\gamma+1}\left(\Omega\right)}.
\end{array}
\end{equation}
Using item $(i)$ of \cref{algebraic} and taking supremum over $0<t\leq T$, we get that $\left\{u_k^{\frac{\gamma+1}{2}}\right\}_k$ is uniformly bounded in $L^{\infty}(0, T ; L^2(\Omega))
\cap  L^2(0, T ; H_0^1(\Omega))$ and $\left\{u_k\right\}_k$ is uniformly bounded in $L^{\infty}(0, T ; L^{\gamma+1}(\Omega))$.
\smallskip\\We now show that $\left\{u_k\right\}_k$ is uniformly bounded in $L^2
(t_0, T ; H_{l o c}^s(\Omega))\cap 
L
^2(t_0, T ; H_{l o c}^1(\Omega))$ for each $t_0>0$. Since $\gamma>1$, and $\Omega_T$ is bounded, and $\left\{u_k\right\}_k$ is uniformly bounded in $L^{\infty}(0, T ; L^{\gamma+1}(\Omega))$, we deduce that $\left\{u_k\right\}_k$ is uniformly bounded in $L^2(0, T ; L^2(\Omega))=L^2(\Omega_T)$, in particular in $L^2(K \times(t_0, T))$, for every subset $K$ compactly contained in $ \Omega$ and for each $t_0>0
$. Further, as $K\times K \subset \Omega\times\Omega \subset Q$ and all the integrals in the left-hand-side of \cref{energy} are positive, hence we have,
\begin{equation*}
    \begin{array}{l}
         \int_{t_0}^{T} \int_K \int_K \frac{\left(u_k(x, t)-u_k(y, t)\right)\left(u_k^\gamma(x, t)-u_k^\gamma(y, t)\right)}{|x-y|^{n+2 s}} d x d y d t \leq 2\|f\|_{L^1\left(\Omega_T\right)}+\frac{2}{\gamma+1}\left\|u_0\right\|_{L^{\gamma+1}\left(\Omega\right)},
    \end{array}
\end{equation*}
and 
\begin{equation*}
    \int_{t_0}^{T} \int_K  u_k^{\gamma-1}|\nabla u_k|^2 d x d t\leq \frac{1}{\gamma}\left(\|f\|_{L^1\left(\Omega_T\right)}+\frac{1}{\gamma+1}\left\|u_0\right\|_{L^{\gamma+1}\left(\Omega\right)}\right),
\end{equation*}
for every $K \subset\subset \Omega$ and for each $t_0>0$. 
\smallskip\\We now apply the item $(iii)$ of \cref{algebraic}, to get
\begin{equation*}
    \int_{t_0}^{T} \int_K \int_K \frac{\left| u_k(x, t)-u_k(y, t)\right|^2\left|u_k(x, t)+u_k(y, t)\right|^{\gamma-1}}{|x-y|^{n+2 s}} d x d y d t \leq 2 C_\gamma\left(\|f\|_{L^1\left(\Omega_T\right)}+\left\|u_0\right\|_{L^{\gamma+1}(\Omega)}\right) .
\end{equation*}
Using the positivity of $u_k$ in $\omega \times [t_0,T)$ for all $k$, we get
\begin{equation}{\label{est1}}
\int_{t_0}^{T} \int_K \int_K \frac{\left| u_k(x, t)-u_k(y, t)\right|^2}{|x-y|^{n+2 s}} d x d y d t \leq \frac{2^{2-\gamma} C_\gamma}{c_{(K,t_0
)}^{\gamma-1}}\left(\|f\|_{L^1\left(\Omega_T\right)}+\left\|u_0\right\|_{L^{\gamma+1}(\Omega)}\right) ,
\end{equation}
and 
\begin{equation}{\label{est2}}
    \int_{t_0}^{T} \int_K |\nabla u_k|^2d x d y d t\leq \frac{1}{\gamma c_{(K,t_0
    )}^{\gamma-1}}\left(\|f\|_{L^1\left(\Omega_T\right)}+\frac{1}{\gamma+1}\left\|u_0\right\|_{L^{\gamma+1}(\Omega)}\right).
\end{equation}
Hence $\left\{u_k\right\}_k$ is uniformly bounded in $L
^2(t_0, T ; H_{l o c}^s(\Omega))\cap L
^2(t_0, T ; H_{l o c}^1(\Omega))\equiv L
^2(t_0, T ; H_{l o c}^1(\Omega))$ for each $t_0>0$.\smallskip\\Since we have $\left\{u_k^{\frac{\gamma+1}{2}}\right\}$ is uniformly bounded in $
L^2(0, T ; H_0^1(\Omega)) \subset$ $L^2\left(\Omega_T\right)$, this implies that the increasing sequence $\left\{u_k\right\}_k$ is uniformly bounded in $L^1\left(\Omega_T\right)$. Then, there exists a measurable function $u$ such that $u_k\rightarrow u$ a.e. in $\Omega_T$ and by Beppo Levi's theorem  $u_k\rightarrow u$ in $L^1(\Omega_T)$. Since $u_k=0$ in $\left(\mathbb{R}^n \backslash \Omega\right) \times(0, T)$, extending $u$ by zero outside of $\Omega$ we conclude that $u_k \rightarrow u$ a.e. in $\mathbb{R}^n \times(0, T)$ with $u=$ 0 in $\left(\mathbb{R}^n \backslash \Omega\right) \times(0, T)$. Now we use Fatou's lemma in \cref{energy,est1,est2}, to obtain for each $t_0>0$, $u \in 
L
^2(t_0, T ; H_{l o c}^1(\Omega)) \cap L^{\infty}(0, T ; L^{\gamma+1}(\Omega))$ and $u^{\frac{\gamma+1}{2}} \in
L^2(0, T ; H_0^1(\Omega))$. As $u_k(x,0)=u_{0k}(x)$ for each $k$, so $u(x,0)=u_0(x)$ in $L^1$ sense (see \cref{ubase}). The rest of the proof will follow similarly as that of \cref{mainth2}.
\subsection*{\textbf{Proof of \cref{mainth5}}}
Here also, we consider the approximating problems \cref{approx4}. 
Since $u_k\in L^2(0,T;H^1_0(\Omega))$, we take $u_k(x, t) \chi_{(0, \tau)}(t)$ as a test function in \cref{approx4}, to have
\begin{equation}{\label{eqn16}}
\begin{array}{l}
 \sup _{0 \leq \tau \leq T} \int_{\Omega} u_k^2(x, \tau) d x+2\int_0^T\int_\Omega |\nabla u_k|^2d x d t+\int_0^T \int_Q \frac{\left(u_k(x, t)-u_k(y, t)\right)^2}{|x-y|^{n+2 s}} d x d y d t \smallskip\\
 \leq 2 \iint_{\Omega_T} f u_k^{1-\gamma} d x d t+\left\|u_0\right\|_{L^{2}(\Omega)} .
\end{array}
\end{equation}
\textbf{Case 1: $f \in L^{\frac{2}{\gamma+1}}\left(0, T ; L^{\left(\frac{2^*}{1-\gamma}\right)^{\prime}}(\Omega)\right)$}\\For this case, since $f \in L^{\frac{2}{\gamma+1}}\left(0, T ; L^{\left(\frac{2^*}{1-\gamma}\right)^{\prime}}(\Omega)\right)$, we apply the Hölder inequality two times, first for the space integral and then for the time integral, to obtain
\begin{equation*}
    \begin{array}{rcl}
         \iint_{\Omega_T} f u_k^{1-\gamma} d x d t &\leq& \int_0^T\left(\int_{\Omega}|f(x, t)|^{\left(\frac{2^*}{1-\gamma}\right)^{\prime}} d x\right)^{\frac{1}{\left(\frac{2^*}{1-\gamma}\right)^\prime}}\left(\int_{\Omega}\left|u_k(x, t)\right|^{2^*} d x\right)^{\frac{1-\gamma}{2^*}} d t \smallskip\\&=&\int_0^T\|f\|_{L^{\left(\frac{2^*}{1-\gamma}\right)^\prime}{(\Omega)}}\left\|u_k\right\|_{L^{2 ^*}(\Omega)}^{1-\gamma} d t  \smallskip\\&\leq&\left(\int_0^T\|f\|_{L^{\left(\frac{2 ^*}{1-\gamma}\right)^{\prime}}(\Omega)}^{\frac{2}{1+\gamma}} d t\right)^{\frac{1+\gamma}{2}}\left(\int_0^T\left\|u_k\right\|_{L^{2^*}(\Omega)}^2 d t\right)^{\frac{1-\gamma}{2}} .
    \end{array}
\end{equation*}
We now apply the Sobolev embedding as of \cref{Sobolev embedding} in the last term on the right-hand-side to get
\begin{equation*}
    \iint_{\Omega_T} f u_k^{1-\gamma} d x d t  \quad \leq(C(n))^{\frac{1-\gamma}{2}}\|f\|_{L^{\frac{2}{\gamma+1}}\left(0, T ; L^{\left(\frac{2^*}{1-\gamma}\right)^{\prime}}(\Omega)\right)}\left[\int_0^T \int_{\Omega} |\nabla u_k|^2d x d t\right]^{\frac{1-\gamma}{2}}.
\end{equation*} 
Since $\gamma <1$, so we use Young's inequality to deduce from \cref{eqn16} that
\begin{equation*}
\begin{array}{l}
    \sup _{0 \leq \tau \leq T} \int_{\Omega} u_k^2(x, \tau) d x+\int_0^T\int_\Omega |\nabla u_k|^2d x d t+\int_0^T \int_Q \frac{\left(u_k(x, t)-u_k(y, t)\right)^2}{|x-y|^{n+2 s}} d x d y d t \leq C,
    \end{array}
\end{equation*}
where $C$ is a positive constant independent of $k$.\smallskip\\
\textbf{Case 2: $f \in L^{\bar{m}}(\Omega_T)$}\\For this case, we notice that as $f \in L^{\bar{m}}\left(\Omega_T\right)$, we can apply the Hölder inequality in the first term on the right-hand-side in \cref{eqn16} with exponents $\bar{m}$ and $\bar{m}^\prime$, to get
\begin{equation}{\label{eqn18}}
\begin{array}{l}
\sup _{0 \leq \tau \leq T} \int_{\Omega} u_k^2(x, \tau) d x+\int_0^T\int_\Omega |\nabla u_k|^2d x d t+\int_0^T \int_Q \frac{\left|u_k(x, t)-u_k(y, t)\right|^2}{|x-y|^{n+2 s}} d x d y d t \\\leq 2\|f\|_{L^{\bar{m}}\left(\Omega_T\right)}\left[\iint_{\Omega_T} |u_k|^{(1-\gamma) \bar{m}^{\prime}} d x d t\right]^{\frac{1}{\bar{m}^{\prime}}}+\left\|u_0\right\|_{L^{2}(\Omega)} .
\end{array}
\end{equation}
We observe that $(1-\gamma) \bar{m}^{\prime}=\frac{2(n+2 )}{n}$ and hence using the Hölder inequality with the exponents $\frac{n}{n-2 }$ and $\frac{n}{2 }$ and by Sobolev embedding (\cref{Sobolev embedding}), we reach that
\begin{equation*}
    \begin{array}{rcl}
\iint_{\Omega_T}\left|u_k\right|^{\frac{2(n+2 )}{n}} d x d t&=&\iint_{\Omega_T}|u_k|^2\left|u_k\right|^{\frac{4 }{n}} d x d t 
\\& \leq &\int_0^T\left[\int_{\Omega}\left|u_k(x, t)\right|^2 d x\right]^{\frac{2 }{n}}\left\|u_k\right\|_{L^{2 ^*}(\Omega)}^2 d t \\
& \leq& C(n)\left[\sup _{0 \leq t \leq T} \int_{\Omega}\left|u_k(x, t)\right|^2 d x\right]^{\frac{2 }{n}} \int_0^T \int_\Omega |\nabla u_k|^2 d x d t.
    \end{array}
\end{equation*}
So using \cref{eqn18} and convexity argument, we get
\begin{equation*}
\iint_{\Omega_T}\left|u_k\right|^{\frac{2(n+2 )}{n}} d x d t\leq C(n) 2^{\frac{2 }{n}}\left(\left(2\|f\|_{L^{\bar{m}}\left(\Omega_T\right)}\right)^{\frac{n+2 }{n}}\left(\iint_{\Omega_T} |u_k|^{(1-\gamma) {\bar{m}^{\prime}}}\right)^{\frac{n+2 }{n \bar{m}^{\prime}}}+\left(\left\|u_0\right\|_{L^{2}(\Omega)}\right)^{\frac{n+2 }{n}}\right) .
\end{equation*}
Since $\frac{n+2 }{n \bar{m}^{\prime}}=\frac{1-\gamma}{2}<1$, we use the Young inequality to obtain
\begin{equation*}
\iint_{\Omega_T}\left|u_k\right|^{\frac{2(n+2 )}{n}} d x d t \leq C,
\end{equation*}
where $C$ is a positive constant independent of $k$. Therefore, by \cref{eqn18} we deduce that the sequence $\left\{u_k\right\}_k$ is uniformly bounded in the space $L^2(0, T ; X_0^s(\Omega))\cap L^2(0, T ; H_0^1(\Omega)) \cap L^{\infty}(0, T ; L^2(\Omega))\equiv L^2(0, T ; H_0^1(\Omega)) \cap L^{\infty}(0, T ; L^2(\Omega))$.
\smallskip \\Now the rest of the proof follows similarly as that of \cref{mainth2}. However, for the sake of completeness, we include it here in a bit different way. \smallskip\\Since the sequence $\left\{u_k\right\}_k$ is uniformly bounded in the reflexive Banach space $L^2(0, T ; H_0^1(\Omega)) $, there exist a subsequence of $\left\{u_k\right\}_k$, still indexed by $k$, and a measurable function $u \in 
L^2(0, T ; H_0^1(\Omega)) $ such that $u_k \rightharpoonup u$ weakly in $
L^2(0, T ; H_0^1(\Omega)) $ and  $u_k\rightarrow u$ strongly in $L^2\left(\Omega_T\right)$ and a.e. in $\Omega \times(0, T)$. In addition, since $u_k=u=0$ on $\mathcal{C} \Omega \times(0, T)$, extending $u\equiv 0$ outside $\Omega$, we obtain $u_k \rightarrow u$ for a.e. $(x, t) \in \mathbb{R}^n \times(0, T)$. Again since $\left\{u_k\right\}$ is increasing in $k$, and $u_k(x,0)=u_{0k}(x)$, so we have $u(x,0)=u_0(x)$ in $L^1$ sense (\cref{ubase}). Also, by Fatou's Lemma, we get that $u\in L^\infty(0,T;L^2(\Omega))$. Hence it follows that
\begin{equation*}
    \frac{u_k(x, t)-u_k(y, t)}{|x-y|^{\frac{n+2 s}{2}}} \rightarrow \frac{u(x, t)-u(y, t)}{|x-y|^{\frac{n+2 s}{2}}} \text { a.e. in } Q \times(0, T) .
\end{equation*}
We take an arbitrary test function $\varphi \in \mathcal{C}_0^{\infty}(\Omega_T)$ in \cref{approx4} to get
\begin{equation*}
    \begin{array}{l}
\quad       -\iint_{\Omega_T} u_k \varphi_t d x d t +\int_0^T\int_{\Omega} u_k(-\Delta\phi) d x d t\\\quad+\frac{1}{2} \iint_{Q_T} \frac{\left(u_k(x, t)-u_k(y, t)\right)(\varphi(x, t)-\varphi(y, t))}{|x-y|^{n+2 s}} d x d y d t=\iint_{\Omega_T} \frac{f_k \varphi}{\left(u_k+\frac{1}{k}\right)^\gamma} d x d t .
    \end{array}
\end{equation*}
Since $\varphi \in \mathcal{C}_0^{\infty}(\Omega_T)$, therefore $\varphi_t$ and $\nabla \varphi$ both are in $L^2(\Omega_T)$ and since strong convergence implies weak convergence too, so it is clear that 
\begin{equation*}
    \lim _{k \rightarrow \infty}\int_0^T\int_{\Omega} u_k \phi_t d x d t=\int_0^T\int_{\Omega} u\phi_t d x d t 
\end{equation*}
and by weak convergence in $L^2(0,T;H^1_0(\Omega))$, we get
 \begin{equation*}
 \begin{array}{rcl}
   \int_0^T\int_{\Omega} u_k(-\Delta \phi) d x d t&=&\int_0^T\int_{\Omega}\nabla u_k\cdot\nabla \phi d x d t\\&\to&\int_0^T\int_{\Omega}\nabla u\cdot\nabla \phi d x d t =\int_0^T\int_{\Omega} u(-\Delta \phi) d x d t,\text{ as } k\to \infty .
   \end{array}
 \end{equation*}
We now define
\begin{equation*}
    F_k(x, y, t)=\frac{u_k(x, t)-u_k(y, t)}{|x-y|^{\frac{n+2 s}{2}}} \text { and } F(x, y, t)=\frac{u(x, t)-u(y, t)}{|x-y|^{\frac{n+2 s}{2}}} \text {. }
\end{equation*}
Then as $F_k \rightarrow F$ a.e. in $Q \times(0, T)$ and $\left\{u_k\right\}_k$ is uniformly bounded in $L^2\left(0, T ; X_0^s(\Omega)\right)$, by weak convergence we reach that
\begin{equation*}
\begin{array}{l}
    \lim _{k \rightarrow \infty} \iint_{Q_T} \frac{\left(u_k(x, t)-u_k(y, t)\right)(\varphi(x, t)-\varphi(y, t))}{|x-y|^{n+2 s}} d x d y d t \smallskip\\=\iint_{Q_T} \frac{(u(x,t)-u(y,t))(\varphi(x, t)-\varphi(y, t))}{|x-y|^{n+2 s}} d x d y d t, 
    \end{array}
\end{equation*}
for all $\varphi \in \mathcal{C}_0^{\infty}(\Omega_T)$. Now since for all $\omega \subset \subset \Omega $ and for all $t_0>0,$ we have $ u_k(x, t) \geq C\left(\omega, t_0,n,s\right) $ in $\omega \times\left[t_0, T\right)$, so we reach that $u_k(x, t) \geq C$ in $\operatorname{Supp} \phi$ (as support of $\phi$ is compact in $\Omega_T$) for all $k \geq 1$. Using this fact, we then have
\begin{equation*}
    0 \leq\left|\frac{f_k \varphi}{\left(u_k+\frac{1}{k}\right)^\gamma}\right| \leq \frac{\|\varphi\|_{L^{\infty}\left(\Omega_T\right)}|f|}{C^\gamma} \in L^1\left(\Omega_T\right).
\end{equation*}
So, by the dominated convergence theorem, we get
\begin{equation*}
\lim _{k \rightarrow \infty} \iint_{\Omega_T} \frac{f_k \varphi}{\left(u_k+\frac{1}{k}\right)^\gamma} d x d t=\iint_{\Omega_T} \frac{f \varphi}{u^\gamma} d x d t.
\end{equation*}
Finally, we pass to the limit as $k \rightarrow\infty$ to get
\begin{equation*}
\begin{array}{l}
    \quad-\iint_{\Omega_T} u(x,t) \varphi_t(x,t) d x d t-\int_0^T\int_{\Omega} u(x,t)\Delta \phi(x,t) d x d t\smallskip \\\quad+\frac{1}{2} \iint_{Q_T} \frac{(u(x, t)-u(y, t))(\varphi(x, t)-\varphi(y, t))}{|x-y|^{n+2 s}} d x d y d t  =\iint_{\Omega_T} \frac{f(x,t) \varphi(x,t)}{u^\gamma(x,t)} d x d t,
     \end{array}
\end{equation*}
for all $\phi \in \mathcal{C}_0^\infty(\Omega_T)$. 
Therefore, $u$ is a weak solution to \cref{problem}.
\subsection*{\textbf{Proof of \cref{mainth6}}}
As $u_k(\geq0)\in L^\infty(\Omega_T)\cap L^2(0,T; H^1_0(\Omega))$, so $(\left(u_k(x, t)+\varepsilon\right)^\theta-\varepsilon^\theta)\in L^2(0,T; H^1_0(\Omega))$, for any $\varepsilon,\theta>0$. 
So choosing $\gamma \leq \theta<1,0<\varepsilon<1/k
$, we take the test function $(\left(u_k(x, t)+\varepsilon\right)^\theta-\varepsilon^\theta) \chi_{(0, \tau)}(t)$ in \cref{approx4}, to get for each $\tau \leq T$
\begin{equation*}
    \begin{array}{l}
       \quad \int_0^\tau \int_{\Omega}\left(u_k\right)_t\left((u_k+\varepsilon))^\theta-\varepsilon^\theta\right) d x d t +\int_0^\tau\int_\Omega \nabla u_k\cdot\nabla \left((u_k+\varepsilon)^\theta-\varepsilon^\theta\right)d x d t\smallskip\\
 +\frac{1}{2} \int_0^\tau \int_Q \frac{(u_k(x, t)-u_k(y, t))((u_k(x, t)+\varepsilon)^\theta-\left(u_k(y, t)+\varepsilon\right)^\theta)}{|x-y|^{n+2 s}} d x d y d t\smallskip \\
\leq \iint_{\Omega_T} f(x, t)\left(u_k(x, t)+\varepsilon\right)^{\theta-\gamma} d x d t .
    \end{array}
\end{equation*}
Letting $\varepsilon\rightarrow 0$, integrating the first term, and taking the supremum over $\tau \in[0, T]$,  we get
\begin{equation}{\label{eqn20}}
\begin{array}{c}
\quad\frac{2}{\theta+1} \sup _{0 \leq \tau \leq T} \int_{\Omega}\left|u_k(x, \tau)\right|^{\theta+1} d x +2\int_0^T\int_\Omega \nabla u_k\cdot\nabla u_k^\theta d x d t\smallskip\\+\int_0^T \int_Q \frac{\left(u_k(x, t)-u_k(y, t)\right)\left(u_k^\theta(x, t)-u_k^\theta(y, t)\right)}{|x-y|^{n+2 s}} d x d y d t 
\leq 2 \iint_{\Omega_T} f u_k^{\theta-\gamma} d x d t+\frac{2}{\theta+1}\left\|u_0\right\|_{L^{2}(\Omega)} .
\end{array}
\end{equation}
Since all terms on the left are positive, taking supremum was allowed. Then by item $i)$ of \cref{algebraic}, we get
\begin{equation*}{\label{eqn211}}
\begin{array}{c}
 \quad\frac{\theta+1}{2 \theta} \sup _{0 \leq \tau \leq T} \int_{\Omega}\left|u_k(x, \tau)\right|^{\theta+1} d x+\frac{(\theta+1)^2}{2}\int_0^T\int_\Omega u_k^{\theta-1}|\nabla u_k|^2d x d t\smallskip\\+\int_0^T \int_Q \frac{|u_k^{\frac{\theta+1}{2}}(x, t)-u_k^{\frac{\theta+1}{2}}(y, t)|^2}{|x-y|^{n+2 s}} d x d y d t 
 \leq \frac{2}{\theta}\left(\iint_{\Omega_T} f u_k^{\theta-\gamma} d x d t+\left\|u_0\right\|_{L^{2}(\Omega)}\right).
\end{array}
\end{equation*}
In the previous inequality, we have used the fact that $\theta+1<2$, now we observe that the term $\frac{\theta+1}{2 \theta}>1$ in the left-hand-side can be dropped.
So we get 
\begin{equation}{\label{eqn21}}
\begin{array}{c}
 \sup _{0 \leq \tau \leq T} \int_{\Omega}\left|u_k(x, \tau)\right|^{\theta+1} d x+\frac{(\theta+1)^2}{4}\int_0^\tau\int_\Omega u_k^{\theta-1}|\nabla u_k|^2d x d t\smallskip\\+\int_0^T \int_Q \frac{|u_k^{\frac{\theta+1}{2}}(x, t)-u_k^{\frac{\theta+1}{2}}(y, t)|^2}{|x-y|^{n+2 s}} d x d y d t \leq \frac{2}{\theta}\left(\iint_{\Omega_T} f u_k^{\theta-\gamma} d x d t+\left\|u_0\right\|_{L^{2}(\Omega)}\right).
\end{array}
\end{equation}
Again, using the same tool like \cref{mainth5} and the Hölder inequality with exponent $\frac{n}{n-2 }$ and $\frac{n}{2 }$ we get
\begin{equation*}
    \begin{array}{rcl}
\iint_{\Omega_T}\left|u_k\right|^{\frac{(\theta+1)(n+2 )}{n}} d x d t&= &\iint_{\Omega _T}\left|u_k\right|^{2\frac{\theta+1}{2}}\left|u_k\right|^{\frac{4 }{n} \frac{\theta+1}{2}} d x d t \\
& \leq &\int_0^T\left(\int_{\Omega}\left|u_k(x, t)\right|^{2^* \frac{\theta+1}{2}} d x\right)^{\frac{n-2}{n}}\left(\int_{\Omega}\left|u_k(x, t)\right|^{\theta+1} d x\right)^{\frac{2 }{n}} d t \\
& =&\int_0^T\left\|u_k^{\frac{\theta+1}{2}}\right\|_{L^{2 ^*}(\Omega)}^2\left(\int_{\Omega}\left|u_k(x, t)\right|^{\theta+1} d x\right)^{\frac{2 }{n}} d t .
\end{array}
\end{equation*}
We now take the supremum over $t\in [0, T]$, and apply the Sobolev embedding as that of \cref{Sobolev embedding}, to get that
\begin{equation*}
\iint_{\Omega_T}\left|u_k\right|^{\frac{(\theta+1)(n+2 )}{n}} d x d t  \leq C(n)\left(\sup _{0 \leq t \leq T} \int_{\Omega}\left|u_k(x, t)\right|^{\theta+1} d x\right)^{\frac{2 }{n}} \iint_{\Omega_T} |\nabla u_k^{\frac{\theta+1}{2}}|^2 d x d t .
\end{equation*}
Using \cref{eqn21} and convexity argument, we now get
\begin{equation}{\label{eqn22}}
\begin{array}{rcl}
\iint_{\Omega_T}\left|u_k\right| ^{\frac{(\theta+1)(n+2 )}{n}} d x d t &
 \leq &C(n)\left(\frac{2}{\theta}\right)^{\frac{n+2 }{n}}\left(\iint_{\Omega_T} f u_k^{\theta-\gamma} d x d t+\left\|u_0\right\|_{L^{2}(\Omega)}\right)^{\frac{n+2 }{n}}\smallskip \\
&\leq& C(n) 2^{\frac{2 }{n}}\left(\frac{2}{\theta}\right)^{\frac{n+2 }{n}}\left(\left(\iint_{\Omega_T} f u_k^{\theta-\gamma} d x d t\right)^{\frac{n+2 }{n}}+\left\|u_0\right\|_{L^{2}(\Omega)}^{\frac{ (n+2)}{n}}\right) .
\end{array}
\end{equation}
\smallskip\textbf{Case 1: $m=1$}\smallskip\\Now for the case $m=1$, we take $\theta=\gamma$ in the \cref{eqn22}. So we obtain
\begin{equation}{\label{eqn23}}
\iint_{\Omega_T}\left|u_k\right| ^{\frac{(\gamma+1)(n+2 )}{n}} d x d t \leq C(n) 2^{\frac{2 }{n}}\left(\frac{2}{\gamma}\right)^{\frac{n+2 }{n}}\left(\|f\|_{L^1(\Omega_T)}^{\frac{n+2 }{n}}+\left\|u_0\right\|_{L^{2}(\Omega)}^{\frac{(n+2 )}{n}}\right) .
\end{equation}
Also, by \cref{eqn21} we easily have
\begin{equation}{\label{eqn24}}
\left\|u_k\right\|_{L^{\infty}(0, T ; L^{\gamma+1}(\Omega))} \leq \frac{ 2}{\gamma}\left[\|f\|_{L^1\left(\Omega_T\right)}+\left\|u_0\right\|_{L^{2}(\Omega)}\right].
\end{equation}
\textbf{Case 2: $m>1$}\smallskip\\For $m>1$, we take $\gamma<\theta<1$. Using \cref{eqn22} and applying Hölder inequality with exponent $m$ and $m^\prime$, we have
\begin{equation*}
    \iint_{\Omega_T} |u_k|^{\frac{(\theta+1)(n+2 )}{n}} d x d t \leq C_1\|f\|_{L^m\left(\Omega_T\right)}^{\frac{n+2 }{n}}\left[\iint_{\Omega_T} |u_k|^{m^{\prime}(\theta-\gamma)} d x d t\right]^{\frac{n+2 }{n m^{\prime}}}+C_1\left\|u_0\right\|_{L^{2}(\Omega)}^{\frac{(n+2 )}{n}},
\end{equation*}
where $C_1=C(n) 2^{\frac{2 }{n}}\left(\frac{2}{\theta}\right)^{\frac{n+2 }{n}}$. We now choose $\gamma<\theta<1$ to be such that
\begin{equation*}
\frac{(\theta+1)(n+2 )}{n}=m^{\prime}(\theta-\gamma),
\text{ i.e. }
\theta=\frac{(n+2 )(m-1)+n m \gamma}{n-2 (m-1)} .    
\end{equation*}
We note that the condition $\theta<1$ is equivalent to $m<\bar{m}$; while $\gamma<\theta$ is always fulfilled. Since $\frac{n+2 }{n m^{\prime}}<1$, applying Young's inequality with $\varepsilon>0$, we get
\begin{equation*}
\smallskip\iint_{\Omega_T} |u_k|^{ \frac{(\theta+1)(n+2 )}{n}} d x d t \leq C_1\|f\|_{L^m\left(\Omega_T\right)}^{\frac{n+2 }{n}}\left(\varepsilon \iint_{\Omega_T} |u_k|^{\frac{(\theta+1)(n+2 )}{n}} d x d t+C(\varepsilon)\right) +C_1\left\|u_0\right\|_{L^{2}(\Omega)}^{\frac{(n+2 )}{n}}.
\end{equation*}
We choose $\varepsilon$ small enough such that $\varepsilon C_1\|f\|_{L^m\left(\Omega_T\right)}^{\frac{n+2 }{n}}=\frac{1}{2}
$ and using the fact that
\begin{equation*}
    \sigma:=\frac{m(\gamma+1)(n+2 )}{n-2 (m-1)}=\frac{(\theta+1)(n+2 )}{n}=m^\prime(\theta-\gamma),
\end{equation*}
we get
\begin{equation}{\label{equa24}}
\iint_{\Omega_T}\left|u_k\right|^\sigma d x d t \leq C,
\end{equation}
where $C$ is a positive constant independent of $k$. 
Now in \cref{eqn21} we use H\"older inequality, to get
\begin{equation*}
    \sup _{0 \leq \tau \leq T} \int_{\Omega} u_k^{\theta+1}(x, \tau) d x \leq \frac{2}{\theta}\left(\|f\|_{L^m\left(\Omega_T\right)}\left\|u_k\right\|_{L^\sigma\left(\Omega_T\right)}^{\frac{\sigma}{m^{\prime}}}+\left\|u_0\right\|_{L^{2}(\Omega)}\right) .
\end{equation*}
Since $\gamma<\theta$ and by \cref{equa24} we conclude that the sequence $\left\{u_k\right\}$ is uniformly bounded in $L^{\infty}(0, T ; L^{\gamma+1}(\Omega))$, the same thing holds for the case $m=1$ by \cref{eqn24}. Finally, by \cref{eqn23} and \cref{equa24} we conclude that in both cases, that is $1 \leq m<\bar{m}$, the sequence $\left\{u_k\right\}$ is uniformly bounded in $L^\sigma(\Omega_T)$, $\sigma:=\frac{m(\gamma+1)(n+2 )}{n-2 (m-1)}$.
\smallskip\\
Now from \cref{eqn21}, again using H\"older inequality, we estimate as
\begin{equation}{\label{eqn25}}
     \iint_{\Omega_T} |\nabla u_k^{\frac{\theta+1}{2}}|^2 d x d t \leq \frac{2}{\theta}\left(\|f\|_{L^m\left(\Omega_T\right)}\left\|u_k\right\|_{L^\sigma\left(\Omega_T\right)}^{\frac{\sigma}{m^{\prime}}}+\left\|u_0\right\|_{L^{2}(\Omega)}\right) \leq C,
\end{equation}
where $C>0$ is a constant independent of $k$. Let $1<\bar{q}<2$ will be specified later. By H\"older inequality, we have
\begin{equation}{\label{3.15}}
    \begin{array}{rcl}
         \iint_{\Omega_T} |\nabla u_k|^{\bar{q}} d x d t= \iint_{\Omega_T} \frac{|\nabla u_k|^{\bar{q}}u_k^{\theta-1}}{u_k^{\theta-1}} d x d t&\leq& \left(\iint_{\Omega_T} |\nabla u_k|^{2}u_k^{\theta-1} d x d t\right)^{\frac{\bar{q}}{2}}\times\left(\iint_{\Omega_T} \frac{u_k^{\theta-1}}{u_k^{\frac{2(\theta-1)}{2-\bar{q}}}} d x d t\right)^{\frac{2-\bar{q}}{2}}\\& \stackrel{\cref{eqn25}}{\leq}& C\left(\iint_{\Omega_T} u_k^{\frac{\bar{q}(1-\theta)}{2-\bar{q}}} d x d t\right)^{\frac{2-\bar{q}}{2}}.
    \end{array}
\end{equation}
We now choose $\bar{q}$ to be such that
\begin{equation*}
\frac{\bar{q}(1-\theta)}{2-\bar{q}}=\sigma=\frac{m(\gamma+1)(n+2)}{n-2 (m-1)}
\text{ i.e. } \bar{q}=\frac{m(\gamma+1)(n+2 )}{n+2 -m(1-\gamma)}.
\end{equation*}
\smallskip We note that $\bar{q}<2$ is equivalent to $m<\bar{m}$; while $\bar{q}>1$ is always fulfilled. Then we get by embedding results and using \cref{equa24,3.15}
\begin{equation*}
\begin{array}{rcl}
    \int_0^T \int_{\Omega} \int_{\Omega} \frac{\left|u_k(x, t)-u_k(y, t)\right|^{\bar{q}}}{|x-y|^{n+\bar{q} s}} d y d x d t \leq C(n,s)\iint_{\Omega_T} |\nabla u_k|^{\bar{q}} d x d t\leq C_2\left(\iint_{\Omega_T} u_k^\sigma(x, t) d x d t\right)^{\frac{2-\bar{q}}{2}} \leq C_3,
    \end{array}
\end{equation*}
where $C_3$ is a positive constant independent of $k$. Thus, $\left\{u_k\right\}_k$ is uniformly bounded in $L^{\bar{q}}(0, T ; W_0^{s, \bar{q}}(\Omega))\cap L^{\bar{q}}(0, T ; W_0^{1, \bar{q}}(\Omega))\equiv  L^{\bar{q}}(0, T ; W_0^{1, \bar{q}}(\Omega))$.\smallskip\\
Since the sequence $\left\{u_k\right\}_k$ is uniformly bounded in the reflexive Banach space $
L^{\bar{q}}(0, T ; W_0^{1, \bar{q}}(\Omega))$, there exist a subsequence of $\left\{u_k\right\}_k$ still indexed by $k$ and a measurable function $u \in 
L^{\bar{q}}(0, T ; W_0^{1, \bar{q}}(\Omega))$ such that $u_k \rightharpoonup u$ weakly in $
L^{\bar{q}}(0, T ; W_0^{1, \bar{q}}(\Omega))$. Also by Fatou's lemma, we will get $u\in 
L^\infty(0,T;L^{\gamma+1}(\Omega))$ and $u\in L^\sigma (\Omega_T)$, with $\sigma:=\frac{m(\gamma+1)(n+2)}{n-2 (m-1)}$. As before, using the monotonicity of the sequence $\left\{u_k\right\}$, we get using Beppo Levi's theorem that $u_k\rightarrow u$ strongly in $L^1(\Omega_T)$ and $u_k\rightarrow u$ a.e in $\mathbb{R}^n\times(0,T)$. By \cref{ubase}, this pointwise limit will satisfy $u(\cdot,0)=u_0(\cdot)$ in $L^1$ sense. Then, the rest of the proof will follow from the proof of \cref{mainth2}.
\begin{remark}
    We observe that the sequence $\left\{u_k\right\}_k$ is uniformly bounded in $L^r(\Omega)$ for every $1 \leq r \leq \sigma$, then following the same lines of the previous proof, we can show that the sequence $\left\{u_k\right\}_k$ is uniformly bounded in $L^q(0, T ; W_0^{s, q}(\Omega))\cap L^q(0, T ; W_0^{1, q}(\Omega))\equiv L^q(0, T ; W_0^{1, q}(\Omega))$ for all $1<q \leq \bar{q}$ where $1 \leq m<\bar{m}$.
\end{remark}
\subsection*{\textbf{Proof of \cref{mainth5.5}}}
Let us introduce the following notations: for any measurable function $v$ we define
\begin{equation*}
\begin{array}{c}
A_m(v)=\left\{(x, t) \in \Omega_T: v(x, t)>m\right\},
\text{ and for a.e. }t \in(0, T), \quad A_m^t(v)=\{x \in \Omega: v(x, t)>m\}.
\end{array}
\end{equation*}We use the idea of the classical proof by D.G. Aronson and J. Serrin, which is to prove a uniform $L^{\infty}$ bound for $u_k$ in $\Omega \times(0, \tau)$, for a positive (small) $\tau$ (to be specified later), and then to iterate such an estimate. We consider the approximated problem \cref{approx4} and take $(u_k-m)_+\chi_{(0,t)}\in L^2(0,T;H^1_0(\Omega))$, for $m>0,t\in(0,\tau),\tau<T$,
as a test function to obtain
\begin{equation}{\label{testfunc}}
\begin{array}{c}
\int_{\Omega} \varphi_m\left(u_k(x, t)\right) d x+\int_0^t\int_\Omega \nabla u_k\cdot\nabla (u_k-m)_+ d x d \theta\smallskip\\+\frac{1}{2} \int_0^t \int_Q \frac{\left(u_k(x, \theta)-u_k(y, \theta)\right)\left((u_k-m)_+(x, \theta)-(u_k-m)_+(y, \theta)\right)}{|x-y|^{n+2 s}} d x d y d \theta \\
\leq\int_0^\tau \int_{\Omega} \frac{f_k (u_k-m)_+}{(u_k+\frac{1}{k})^\gamma} d x d \theta+\int_{\Omega} \varphi_m\left(u_k(x, 0)\right) d x,
\end{array}
\end{equation}
where $\varphi_m(\rho)=\int_0^\rho (\sigma-m)_+
d \sigma= \frac{(\rho-m)_+^2}{2}$. We choose a $m>0$ large enough such that $m >\operatorname{max}\{\left\|u_ 0\right\|_{L^{\infty}(\Omega)},1\}$, in order to neglect the last term above. Noting that $ \nabla u_k\cdot\nabla (u_k-m)_+=|\nabla (u_k-m)_+|^2$, and \begin{equation*}\left(u_k(x, \theta)-u_k(y, \theta)\right)\left((u_k-m)_+(x, \theta)-(u_k-m)_+(y, \theta)\right)\geq \left((u_k-m)_+(x, \theta)-(u_k-m)_+(y, \theta)\right)^2,\end{equation*} we take supremum over $t\in(0,\tau]$ in \cref{testfunc}, to get
\begin{equation}{\label{iter}}
\begin{array}{c}
\|(u_k-m)_+\|^2_{L^\infty(0,\tau;L^2(\Omega))}+\|(u_k-m)_+\|^2_{L^2(0,\tau;X^s_0(\Omega))}+\|(u_k-m)_+\|^2_{L^2(0,\tau;H^1_0(\Omega))} 
\smallskip\\
\leq \int_0^\tau \int_{A^t_{m,k}} f (u_k-m)_+ d x d t.
\end{array}
\end{equation}
Note that, in order to deal with the singularity, we have used the fact that $m\geq 1$ and hence $u_k> 1$ in $A^t_{m,k}$, here the subscript $k$ in $A^t_{m,k}$ denotes that we are considering the function $u_k$.
Now the term of the right-hand side above can be estimated as follows,
\begin{equation}{\label{rhs}}
    \int_0^\tau \int_{A_{m,k}^t} f(u_k-m)_+ d x d t \leq \int_0^\tau \int_{A_{m,k}^t} f (u_k-m)_+ ^2 d x d t+\int_0^\tau \int_{A_{m,k}^t} f d x d t .
\end{equation}
We now study each member present in the right-hand side of \cref{rhs}. We first define the followings,
\begin{equation*}
\bar{r}=2 r^{\prime}, \bar{q}=2 q^{\prime}, \eta=\frac{2 \eta_1}{n},\hat{r}=\bar{r}(1+\eta), \hat{q}=\bar{q}(1+\eta),
\end{equation*}
where $\eta_1=1-\frac{1}{r}-\frac{n}{2 q}$. Note that $\eta_1\in(0,1)$ as $0<\frac{1}{r}+\frac{n}{2q}<1$. Further, simple calculation yields that $\frac{1}{\hat{r}}+\frac{n}{2 \hat{q} }=\frac{n}{4 }$. Now, applying H\"older inequality repeatedly, we estimate the first term as
\begin{equation*}
\begin{array}{rcl}
\int_0^\tau \int_{A_{m,k}^t} f (u_k-m)_+^2 d x d t &\leq& \int_0^\tau \left(\int_{A_{m,k}^t}| f (x,t)|^q d x\right )^{\frac{1}{q}}\left(\int_{A_{m,k}^t}(u_k-m)_+^{2q^\prime} d x\right)^{\frac{1}{q^\prime}} d t  \smallskip\\
&\leq& \left(\int_0^\tau \left(\int_{A_{m,k}^t}| f (x,t)|^q d x\right )^{\frac{r}{q}}d t\right)^{\frac{1}{r}}\left(\int_0^\tau\left(\int_{A_{m,k}^t}(u_k-m)_+^{2q^\prime} d x\right)^{\frac{r^\prime}{q^\prime}} d t \right)^{\frac{1}{r^\prime}}.
\end{array}
\end{equation*}
As again by H\"older inequality with exponent $(1+\eta)$, we have
\begin{equation*}
    \begin{array}{rcl}
        \left(\int_0^\tau\left(\int_{A_{m,k}^t}(u_k-m)_+^{2q^\prime} d x\right)^{\frac{r^\prime}{q^\prime}} d t \right)^{\frac{1}{r^\prime}}&\leq &\left(\int_0^\tau\left(\int_{A_{m,k}^t}(u_k-m)_+^{2q^\prime(1+\eta)}d x\right)^{\frac{r^\prime}{q^\prime(1+\eta)}}|A^t_{m,k}|^{\frac{r^\prime\eta}{q^\prime(1+\eta)}} d t \right)^{\frac{1}{r^\prime}}\\&=&\left(\int_0^\tau\left(\int_{A_{m,k}^t}(u_k-m)_+^{\hat{q}}d x\right)^{\frac{2r^\prime}{\hat{q}}}|A^t_{m,k}|^{\frac{2r^\prime\eta}{\hat{q}}} d t \right)^{\frac{1}{r^\prime}}\\&\leq & \left(\int_0^\tau\left(\int_{A_{m,k}^t}(u_k-m)_+^{\hat{q}}d x\right)^{\frac{\hat{r}}{\hat{q}}}d t \right)^{\frac{1}{r^\prime(1+\eta)}}\left(\int_0^\tau |A^t_{m,k}|^{\frac{\hat{r}}{\hat{q}}}\right)^{\frac{\eta}{r^\prime(1+\eta)}} ,
    \end{array}
\end{equation*}
therefore
\begin{equation*}
    \int_0^\tau \int_{A_{m,k}^t} f (u_k-m)_+^2 d x d t \leq \|f\|_{L^r(0,T;L^q(\Omega))}\|(u_k-m)_+\|^2_{L^{\hat{r}}(0,\tau;L^{\hat{q}}(\Omega))}\mu_k(m)^{\frac{2 \eta}{\hat{r}}},
\end{equation*}
where $\mu_k(m)=\int_0^\tau\left|A_{m,k}^t\right|^{\frac{\hat{r}}{\hat{q}}} d t$. We now use Gagliardo-Nirenberg inequality (\cref{gagliardo}) for $(u_k-m)_+$ to get
\begin{equation}{\label{first}}
    \begin{array}{rcl}
\int_0^\tau \int_{A_{m,k}^t} f (u_k-m)_+^2 d x d t &\leq&  c\|f\|_{L^r(0,T;L^q(\Omega))} \mu_k(m)^{\frac{2 \eta}{\hat{r}}}\left(\int_0^\tau\left\|(u_k-m)_+\right\|_{L^2(\Omega)}^{(1-\theta) \hat{r}}\left\|\nabla (u_k-m)_+\right\|_{L^2\left(\Omega\right)}^{\hat{r} \theta} d t\right)^{\frac{2}{\hat{r}}} \\&
\leq& c\|f\|
\mu_k(m)^{\frac{2 \eta}{\hat{r}}}\bigg[\left\|(u_k-m)_+\right\|_{L^{\infty}\left(0, \tau ; L^2(\Omega)\right)}^2
+\left\| (u_k-m)_+\right\|_{L^2(0, \tau ; H^1_0(\Omega))}^2\bigg].
\end{array}
\end{equation}
where $\hat{r}\theta=2$, and $c$ is a constant independent of the choice of $k$ and $m$. Note that we have used the fact that any function in $H^1_0(\Omega)$ can be considered as a function in $H^1(\mathbb{R}^n)$. Also, we applied Young's inequality with exponents $\frac{\hat{r}}{2}$ and its conjugate.\\
On the other hand, the second term on the right-hand side in \cref{rhs} can be estimated by Hölder inequality as
\begin{equation}{\label{second}}
\begin{array}{rcl}
     \int_0^\tau \int_{A_{m,k}^t} f d x d t &\leq&\int_0^\tau \left(\int_{A_{m,k}^t} |f (x,t)|^qd x\right)^{\frac{1}{q}} |A^t_{m,k}|^{\frac{1}{q^\prime}}d t\smallskip \\&\leq&\left(\int_0^\tau \left(\int_{A_{m,k}^t} |f (x,t)|^qd x\right)^{\frac{r}{q}}dt\right)^{\frac{1}{r}} \left(\int_0^\tau|A^t_{m,k}|^{\frac{r^\prime}{q^\prime}}d t\right)^{\frac{1}{r^\prime}}\smallskip\\ &\leq&\|f\|_{L^r(0,T;L^q(\Omega))}\mu_k(m)^{\frac{2(1+\eta)}{\hat{r}}} .
\end{array}  
\end{equation}
Denoting
\begin{equation*}
||| (u_k-m)_+|||^2=\| (u_k-m)_+\|_{L^{\infty}(0, \tau ; L^2(\Omega))}^2+\|(u_k-m)_+ \|_{L^2(0, \tau ; H^1_0(\Omega))}^2,    
\end{equation*}
and using \cref{first}, \cref{second}, we get from \cref{iter},
\begin{equation*}
||| (u_k-m)_+|||^2\leq c\|f\|_{L^r(0,T;L^q(\Omega))}\left[\mu_k(m)^{\frac{2 \eta}{\hat{r}}}||| (u_k-m)_+|||^2+\mu_k(m)^{\frac{2(1+\eta)}{\hat{r}}}\right],
\end{equation*}
where $c$ is a constant which does not depend on $k$ and $m$. Note that $\mu_k(m) \leq \tau|\Omega|^{\frac{\hat{r}}{\hat{q}}}$, for all $k\in\mathbb{N}$ and $m$, so that we can fix $\tau$, independent of $u_k$ and $m$, suitable small in such a way that
$c\mu_k(m)^{\frac{2 \eta}{\hat{r}}}\|f\|_{L^r(0,T;L^q(\Omega))}=\frac{1}{2}
$ and use again the Gagliardo-Nirenberg inequality (see \cref{first}), to deduce that
\begin{equation}{\label{finalboun}}
\left\|(u_k-m)_+\right\|_{L^{\hat{r}}\left(0, \tau ; L^{\hat{q}}(\Omega)\right)}^2 \leq c||| (u_k-m)_+|||^2\leq c\|f\|_{L^r(0,T;L^q(\Omega))} \mu_k(m)^{\frac{2(1+\eta)}{\hat{r}}} .  
\end{equation}
Consider $l>m>\operatorname{max}\{\left\|u_ 0\right\|_{L^{\infty}(\Omega)},1\}:=m_0$. Then $A^t_{l,k}\subset A^t_{m,k}$, and using \cref{finalboun}, we get
\begin{equation*}
\begin{array}{rcl}
(l-m)\mu_k(l)^{\frac{1}{\hat{r}}}
=\left(\int_0^\tau\left((l-m)^{\hat{q}}\left|A_{l,k}^t\right|\right)^{\frac{\hat{r}}{\hat{q}}} d t\right)^{\frac{1}{\hat{r}}}&\leq& \left(\int_0^\tau\left(\int_{A_{l,k}^t}(u_k-m)_+^{\hat{q}}d x\right)^{\frac{\hat{r}}{\hat{q}}}d t \right)^{\frac{1}{\hat{r}}}\\&\leq &\left(\int_0^\tau\left(\int_{A_{m,k}^t}(u_k-m)_+^{\hat{q}}d x\right)^{\frac{\hat{r}}{\hat{q}}}d t \right)^{\frac{1}{\hat{r}}}
\leq c\mu_k(m)^{\frac{1+\eta}{\hat{r}}} .    
\end{array}
\end{equation*}
Therefore for all $l>m>m_0$, we have
\begin{equation*}
   \mu_k(l) \leq\frac{c}{(l-m)^{\hat{r}}} \mu_k(m)^{1+\eta} ,
\end{equation*}
where c is a constant independent of $k$.
Now applying [\citealp{Iterative}, Lemma B.$1$], we conclude that
$\mu_k(m_0+d_k)=0,$ where $d_k=c[\mu_k(m_0)]^\eta2^{\hat{r}(1+\eta)/\eta}$. As for each $k$, $d_k\leq c \left(T|\Omega|^{\frac{\hat{r}}{\hat{q}}}\right)^\eta$, we get
\begin{equation*}
\left\|u_k\right\|_{L^{\infty}(\Omega \times[0, \tau])} \leq d .
\end{equation*}
We can iterate this procedure in the sets $\Omega \times[\tau, 2 \tau], \cdots, \Omega \times[j \tau, T]$, where $T-j \tau \leqslant \tau$ to conclude that
\begin{equation*}
    \left\|u_k\right\|_{L^{\infty}\left(\Omega_T\right)} \leq C \quad \text { uniformly in } k \in \mathbb{N} .
\end{equation*}
Now, since the sequence $\{u_k\}_k$ is increasing in $k$, we have $\|u\|_{L^\infty(\Omega)}\leq C$. Further, if $\gamma\leq 1$, testing \cref{approx4} with the function $u_k$ we can deduce $u\in L^2(0,T;H^1_0(\Omega))$ (see \cref{mainth1}).
\subsection*{\textbf{Proof of \cref{mainth5.7}}}
For $\gamma\geq 1$, we choose $\delta>\frac{1+\gamma}{2}$ and for $\gamma<1$, we choose $\delta>1$, and take $u_k^{2\delta-1}\chi_{(0,t)},0<t\leq T$ as test function in \cref{approx4}, with $u_0\equiv 0$. We note that, such a test function is admissible as $u_k\in L^\infty(\Omega_T)$.
We get $\forall t\leq T,$
\begin{equation*}
 \begin{array}{c}
 \quad\frac{1}{2\delta} \int_{\Omega} u_k^{2\delta}(x, t) d x+\int_0^t\int_\Omega \nabla u_k\cdot\nabla u_k^{2\delta-1}d x d\theta\smallskip\\\quad+\frac{1}{2} \int_0^t \int_Q \frac{\left(u_k^{2\delta-1}(x, \theta)-u_k^{2\delta-1}(y, \theta)\right)\left(u_k(x, \theta)-u_k(y, \theta)\right)}{|x-y|^{n+2 s}} d x d y d \theta 
\leq \iint_{\Omega_t} f u_k^{2\delta-1-\gamma}d x d \theta.
\end{array}
\end{equation*}
Taking supremum over $t\in(0,T]$ and using item $(i)$ of \cref{algebraic}, by Hölder and Sobolev inequalities, we have
\begin{equation}{\label{test1}}
\begin{array}{c}
\sup _{t \in[0, T]} \int_{\Omega} u_k^{2 \delta} (x,t)d x+\frac{2(2\delta-1)}{ \delta} \lambda \int_0^T\left\|u_k^\delta\right\|_{L^{2^*}(\Omega)}^2 d t +\frac{(2\delta-1)}{ \delta} \lambda_s\int_0^T\left\|u_k^\delta\right\|_{L^{2_s ^*}(\Omega)}^2 d t \smallskip\\
\leq 2\delta\left\|f\right\|_{L^r\left(0, T ; L^q(\Omega)\right)}\left(\int_0^T\left[\int_{\Omega} u_k^{(2 \delta-1-\gamma) q^{\prime}} d x\right]^{\frac{r^{\prime}}{q^{\prime}}} d t\right)^{\frac{1}{r^{\prime}}} .
\end{array}
\end{equation}
Note that $\frac{2\delta-1}{\delta}>1$, so we can ignore the constants in left. Denoting by
$
A=\left(\int_0^T\left\|u_k\right\|_{L^{(2 \delta-1-\gamma) q^{\prime}}(\Omega)}^{(2 \delta-1-\gamma) r^{\prime}} d t\right)^{\frac{1}{r^{\prime}}},    
$
we get from \cref{test1},
\begin{equation}{\label{res1}}
\sup _{t\in[0, T]} \int_{\Omega} u_k^{2 \delta} d x \leq 2 \delta\left\|f\right\|_{L^r\left(0, T ; L^q(\Omega)\right)} A,    
\end{equation}
and
\begin{equation}{\label{res2}}
    \int_0^T\left[\int_{\Omega} u_k^{2_s^*\delta} d x\right]^{\frac{2}{2 _s^*}} d t \leq  c\int_0^T\left[\int_{\Omega} u_k^{2^*\delta} d x\right]^{\frac{2}{2^*}} d t \leq\frac{c}{\lambda} 2\delta\left\|f\right\|_{L^r\left(0, T ; L^q(\Omega)\right)} A .
\end{equation}
\smallskip\\
\textbf{Case 1: $1<q<\frac{n r}{n+2 (r-1)}$ \text{ i.e. } $\frac{1}{r}<\frac{n}{n-2} \frac{1}{q}-\frac{2}{n-2}$} \smallskip\\Since, $1<q<\frac{n r}{n+2 (r-1)}$ implies $q<r$ and $\frac{r^{\prime}}{q^{\prime}}<\frac{2}{2^*}$, we apply Hölder inequality with exponent $\frac{2q^{\prime}}{2^*r^{\prime}}$ to get
\begin{equation}{\label{res2.5}}
A \leq T^{\frac{1}{r^{\prime}}-\frac{2^*}{2 q^{\prime}}}\left[\int_0^T\left(\int_{\Omega} u_k^{(2 \delta-1-\gamma) q^{\prime}} d x\right)^{\frac{2}{2 ^*}} d t\right]^{\frac{2^*}{2 q^{\prime}}} .    
\end{equation}
Thus from \cref{res2}, it follows
\begin{equation*}{\label{res3}}
\int_0^T\left[\int_{\Omega} u_k^{2^* \delta} d x\right]^{\frac{2}{2^*}} d t \leq 2\delta cA \leq 2\delta c\left[\int_0^T\left(\int_{\Omega} u_k^{(2 \delta-1-\gamma) q^{\prime}} d x\right)^{\frac{2}{2 ^*}} d t\right]^{\frac{2^*}{2 q^{\prime}}} . \smallskip   
\end{equation*}
We now choose $2^* \delta=(2 \delta-1-\gamma) q^{\prime}$, that is, $\delta=\frac{1}{2} \cdot \frac{q(n-2 )(\gamma+1)}{(n-2 q )}$. Note that if $\gamma\geq 1$, then $\delta >\frac{\gamma+1}{2}$ if and only if $q>1$, and for $\gamma<1$, $\delta>1$ if and only if $q>\left(\frac{2^*}{1-\gamma}\right)^{\prime}$.  Moreover, since $q<\frac{n}{2 }$ it follows that $\frac{2^*}{2 q^{\prime}}<1$. Thus we get
\begin{equation}{\label{res4}}
   \int_0^T\left(\int_{\Omega} u_k^{2^*\delta} d x\right)^{\frac{2}{2^*}} d t=\int_0^T\left(\int_{\Omega} u_k^{\left(2 \delta-1-\gamma\right) q^{\prime}} d x\right)^{\frac{2}{2^*}} d t \leq  c,
\end{equation}
and therefore from \cref{res1}, \cref{res2.5} and \cref{res4}, it follows
\begin{equation*}
    \left\|u_k\right\|_{L^\infty\left(0, T ; L^{2\sigma}(\Omega)\right)} +\left\|u_k\right\|_{L^{2\sigma}\left(0, T ; L^{2^*\sigma}(\Omega)\right)} \leq  c, \text{ with } \sigma=\frac{q(n-2 )(\gamma+1)}{2(n-2 q )}>\frac{q}{2}.
\end{equation*}
\smallskip\\\textbf{Case 2: $\frac{nr}{n+2(r-1)} \leq q<\frac{n}{2 } r^{\prime}$ i.e. $\frac{1}{r}\geq\frac{n}{n-2} \frac{1}{q}-\frac{2}{n-2}$ and $\frac{1}{r}+\frac{n}{2 q }>1$}\smallskip
\\In this case we choose $\delta=\frac{1}{2} \frac{q r n(\gamma+1)}{nr-2 q (r-1)}$. Note that for $\gamma\geq 1$, $\delta >\frac{1+\gamma}{2},$ if and only if $\frac{1}{r}+\frac{n}{2 q }<1+\frac{n}{2}$ which is always satisfied and for $\gamma <1$, $\delta>1$ if and only if $q>\frac{2nr}{nr(\gamma+1)+4(r-1)}$ which is satisfied if $r>\frac{2}{\gamma+1}$. Now by interpolation, we get that $\frac{1}{(2 \delta-1-\gamma) q^{\prime}}=\frac{1-\theta}{2 \delta}+\frac{\theta}{2^* \delta}$, where $\theta=\frac{nq(r-1)}{nr(q-1)+2q(r-1)}\in(0,1]
$. Therefore
\begin{equation*}
    \int_0^T\left\|u_k\right\|_{L^{(2 \delta-1-\gamma) q^{\prime}}(\Omega)}^{r^{\prime}(2 \delta-1-\gamma)} d t \leq c\left\|u_k\right\|_{L^{\infty}\left(0, T ; L^{2 \delta}(\Omega)\right)}^{2 \delta \mu_1} \int_0^T\left(\int_{\Omega} u_k^{2^ * \delta} d x\right)^{\frac{2}{2^*} \mu_2} d t,
\end{equation*}
where $\mu_1=\frac{(1-\theta) r^{\prime}(2 \delta-1-\gamma)}{2 \delta}$ and $\mu_2=\frac{\theta r^{\prime}(2 \delta-1-\gamma)}{2\delta}$. Since $\mu_2\leq 1$, we have that
\begin{equation*}
       \int_0^T\left\|u_k\right\|_{L^{(2 \delta-1-\gamma) q^{\prime}}(\Omega)}^{r^{\prime}(2 \delta-1-\gamma)} d t \leq c\left\|u_k\right\|_{L^{\infty}\left(0, T ; L^{2 \delta}(\Omega)\right)}^{2 \delta \mu_1} \left(\int_0^T\left[\int_{\Omega} u_k^{2^* \delta} d x\right]^{\frac{2}{2^*}} d t\right)^{\mu_2},
\end{equation*}
and since $\mu_1+\mu_2 < r^{\prime}$, using \cref{res1,res2}, we can conclude the following which will give our final result
\begin{equation*}
    \int_0^T\left(\int_{\Omega} u_k^{(2 \delta-1-\delta) q^{\prime}} d x\right)^{\frac{r^{\prime}}{q^{\prime}}} d t \leq c.
\end{equation*}
\subsection*{\textbf{Proof of \cref{mainth7}}}
Consider the approximated problem \cref{approxvari} and corresponding properties of $\{u_k\}_k$, \cref{approxexxist}. We denote $(\omega_{\tilde{T}})_\delta=\Omega_T \backslash (\Omega_T)_\delta$, then $u_k(x) \geq C((\omega_{\tilde{T}})_\delta,n,s)>0$ in $(\omega_{\tilde{T}})_\delta$ for all $k$. Choosing $u_k\chi_{(0,\tau)}(t)$ as test function in \cref{approxvari}, we obtain
\begin{equation*}
\begin{array}{l}
\quad\frac{1}{2}\int_{\Omega} u_k^2(x, \tau) d x+\int_0^\tau\int_\Omega |\nabla u_k|^2d x d t+\frac{1}{2}\int_0^\tau \int_Q \frac{\left(u_k(x, t)-u_k(y, t)\right)^2}{|x-y|^{n+2 s}} d x d y d t \smallskip\\\leq\iint_{\Omega_T} \frac{f_ku_k}{\left(u_k+\frac{1}{k}\right)^{\gamma(x,t)}} d x d t+\frac{1}{2} \int_{\Omega} u_0^2(x) d x.
\end{array}
\end{equation*}
Now taking supremum over $\tau\in(0,T]$, we get
\begin{equation}{\label{variabletest}}
    \begin{array}{l}
    \quad \sup _{0 \leq \tau \leq T} 
\int_{\Omega} u_k^2(x, \tau) d x+2\int_0^T\int_\Omega |\nabla u_k|^2d x d t+\int_0^T \int_Q \frac{\left(u_k(x, t)-u_k(y, t)\right)^2}{|x-y|^{n+2 s}} d x d y d t \smallskip\\\leq2\iint_{\Omega_T} \frac{f_ku_k}{\left(u_k+\frac{1}{k}\right)^{\gamma(x,t)}} d x d t+\left\|u_0\right\|_{L^{2}(\Omega)} \smallskip\\
=2\iint_{(\Omega_T)_\delta} \frac{f_ku_k}{\left(u_k+\frac{1}{k}\right)^{\gamma(x,t)}} d x d t+2\iint_{(\omega_{\tilde{T}})_\delta} \frac{f_ku_k}{\left(u_k+\frac{1}{k}\right)^{\gamma(x,t)}} d x d t +\left\|u_0\right\|_{L^{2}(\Omega)}\smallskip \\
 \leq 2\iint_{(\Omega_T)_\delta}  f u_k^{1-\gamma(x,t)} d x d t+2\iint_{(\omega_{\tilde{T}})_\delta}  \frac{fu_k}{C_{(\omega_{\tilde{T}})_\delta}^{\gamma(x,t)}} d x d t+\left\|u_0\right\|_{L^{2}(\Omega)}\smallskip\\
 \leq 2\iint_{(\Omega_T)_\delta\cap\left\{u_k \leq 1\right\}}  f u_k^{1-\gamma(x,t)} d x d t +2\iint_{(\Omega_T)_\delta\cap\left\{u_k \geq 1\right\}}  f u_k^{1-\gamma(x,t)} d x d t 
\smallskip\\\quad+2\iint_{(\omega_{\tilde{T}})_\delta}  \frac{fu_k}{C_{(\omega_{\tilde{T}})_\delta}^{\gamma(x,t)}} d x d t+\left\|u_0\right\|_{L^{2}(\Omega)}  \\\leq\left\|u_0\right\|_{L^{2}(\Omega)} +2\|f\|_{L^1(\Omega_T)}+2\left(1+\left\|C_{(\omega_{\tilde{T}})_\delta}^{-\gamma(\cdot)}\right\|_{L^{\infty}(\Omega_T)}\right) \iint_{\Omega_T} f u_k d x d t. 
\end{array}
\end{equation}
\textbf{Case 1: $f \in L^{2}\left(0, T ; L^{\left(\frac{2n}{n+2}\right)}(\Omega)\right)$}\smallskip\\We note that $(2^*)^\prime=\frac{2n}{n+2}$, then by using Hölder's and Sobolev's inequalities, we have
\begin{equation*}
    \begin{array}{rcl}
         \int_0^T\int_{\Omega} f u_k d x d t&\leq &C\int_0^T\left(\int_{\Omega}|f(x, t)|^{(2^*)^\prime} d x\right)^{\frac{1}{(2^*)^\prime}}\left(\int_{\Omega}\left|u_k(x, t)\right|^{2^*} d x\right)^{\frac{1}{2 ^*}} d t
         \smallskip\\&\leq&C\int_0^T\|f\|_{L^{\left({2^*}\right)^\prime}{(\Omega)}}\left(\int_{\Omega}\left|\nabla u_k\right|^{2} d x\right)^{\frac{1}{2 }} d t\smallskip\\&\leq& C\left(\int_0^T\|f\|_{L^{\left({2^*}\right)^\prime}{(\Omega)}}^2d t\right)^{\frac{1}{2}}\left(\int_0^T\int_{\Omega}\left|\nabla u_k\right|^{2} d x d t\right)^{\frac{1}{2}}.
    \end{array}
\end{equation*}
In the above, we use Young's inequality, and then from \cref{variabletest} we get that
\begin{equation*}
\begin{array}{l}
\quad\sup _{0 \leq \tau \leq T} 
\int_{\Omega} u_k^2(x, \tau) d x+\left\|u_k\right\|_{L^2(0,T;H_0^{1}(\Omega))}^2\leq\left\|u_0\right\|_{L^{2}(\Omega)}+2\|f\|_{L^1(\Omega_T)}+C\left(1+\left\|C_{(\omega_{\tilde{T}})_\delta}^{-\gamma(\cdot)}\right\|_{L^{\infty}(\Omega_T)}\right). \end{array}
\end{equation*}
Therefore the sequence $\left\{u_k\right\}_k$ is uniformly bounded in $
L^2(0, T ; H_0^1(\Omega)) \cap L^{\infty}(0, T ; L^2(\Omega))$.
\smallskip\\\textbf{Case 2: $f \in L^{\bar{r}
}\left(\Omega_T\right)$}\smallskip\\
For this case, we apply Hölder inequality with exponents $\bar{r}$ and $\bar{r}^\prime$ to get
\begin{equation}{\label{fusobolev}}
    \begin{array}{rcl}
         \int_0^T\int_{\Omega} f u_k d x d t\leq C\|f\|_{L^{\bar{r}}\left(\Omega_T\right)}\left[\iint_{\Omega_T} |u_k|^{\bar{r}^{\prime}} d x d t\right]^{\frac{1}{\bar{r}^{\prime}}}.
         \end{array}
         \end{equation}
We observe that $ \bar{r}^{\prime}=\frac{2(n+2 )}{n}$ and hence using the Hölder inequality with exponents $\frac{n}{n-2 }$ and $\frac{n}{2 }$ and by Sobolev embedding (\cref{Sobolev embedding}), we can write
\begin{equation}{\label{fusobolev1}}
    \begin{array}{rcl}
\iint_{\Omega_T}\left|u_k\right|^{\frac{2(n+2 )}{n}} d x d t=\iint_{\Omega_T}|u_k|^2\left|u_k\right|^{\frac{4 }{n}} d x d t 
& \leq &\int_0^T\left[\int_{\Omega}\left|u_k(x, t)\right|^2 d x\right]^{\frac{2 }{n}}\left\|u_k\right\|_{L^{2 ^*}(\Omega)}^2 d t \\
& \leq& C(n)\left[\sup _{0 \leq \tau\leq T} \int_{\Omega}\left|u_k(x, \tau)\right|^2 d x\right]^{\frac{2 }{n}} \int_0^T \int_\Omega |\nabla u_k|^2 d x d t.
    \end{array}
\end{equation}
So using \cref{fusobolev} in \cref{variabletest}, we get from \cref{fusobolev1} by convexity argument
\begin{equation*}
\iint_{\Omega_T}\left|u_k\right|^{\frac{2(n+2 )}{n}} d x d t\leq C(n) 2^{\frac{2 }{n}}\left(\left(C_1C\|f\|_{L^{\bar{r}}\left(\Omega_T\right)}\right)^{\frac{n+2 }{n}}\left(\iint_{\Omega_T} |u_k|^{ {\bar{r}^{\prime}}}\right)^{\frac{n+2 }{n \bar{r}^\prime}}+C_2^{\frac{n+2 }{n}}\right) ,
\end{equation*}
where $C_1=2\left(1+\left\|C_{(\omega_{\tilde{T}})_\delta}^{-\gamma(\cdot)}\right\|_{L^{\infty}(\Omega_T)}\right) $ and $C_2=\left\|u_0\right\|_{L^{2}(\Omega)} +2\|f\|_{L^1(\Omega_T)}$.
Now since $\frac{n+2 }{n \bar{r}^{\prime}}=\frac{1}{2}$, we use Young's inequality to obtain
\begin{equation*}
\iint_{\Omega_T}\left|u_k\right|^{\bar{r}^\prime} d x d t =\iint_{\Omega_T}\left|u_k\right|^{\frac{2(n+2 )}{n}} d x d t \leq C,
\end{equation*}
where $C$ is a positive constant independent of $k$. Therefore, by \cref{variabletest,fusobolev} we deduce that $\left\{u_k\right\}_k$ is uniformly bounded in $
L^2(0, T ; H_0^1(\Omega)) \cap L^{\infty}(0, T ; L^2(\Omega))$.
\smallskip\\Since $\left\{u_k\right\}_k$ is uniformly bounded in the reflexive Banach space $L^2(0, T ; H_0^1(\Omega)) $, there exist a subsequence of $\left\{u_k\right\}_k$, still indexed by $k$, and a measurable function $u \in 
L^2(0, T ; H_0^1(\Omega)) $ such that $u_k \rightharpoonup u$ weakly in $
L^2(0, T ; H_0^1(\Omega)) $. Also, since $\left\{u_k\right\}$ is increasing in $k$, it holds by Beppo Levi's theorem that $u_k\rightarrow u$ strongly in $L^1\left(\Omega_T\right)$ and hence a.e. in $\Omega_T$. 
Applying Fatou's Lemma, we get $u\in L^\infty(0,T;L^2(\Omega))$. This pointwise limit is actually defined for each $t\in[0,T)$ and satisfies $u(\cdot,0)=u_0(\cdot)$ in $L^1 $ sense (see \cref{ubase}). We show that this $u$ is a very weak solution to \cref{problemvari} in the sense of \cref{very weak variable}. Choosing arbitrary $\phi \in \mathcal{C}_0^{\infty}(\Omega_T)$, we have
\begin{equation*}
\iint_{\Omega_T}(\left(u_k\right)_t-\Delta u_k+(-\Delta)^s u_k) \phi \,d x d t=\iint_{\Omega_T} \frac{f_k\phi}{\left(u_k+\frac{1}{k}\right)^{\gamma(x,t)}} d x d t .
\end{equation*}
Since $\phi \in \mathcal{C}_0^{\infty}(\Omega_T)$, therefore $ \phi_t, \Delta \phi$ and $(-\Delta)^s\phi$ all are bounded. As $u_k \rightarrow u$ strongly in $L^1\left(\Omega_T\right)$, we have 
\begin{equation*}
    \begin{array}{rcl}
\iint_{\Omega_T}(\left(u_k\right)_t-\Delta u_k+(-\Delta)^s u_k) \phi \,d x d t &=&\iint_{\Omega_T} u_k(-(\phi)_t-\Delta \phi+(-\Delta)^s \phi) d x d t \smallskip\\&\rightarrow &\iint_{\Omega_T} u(-(\phi)_t-\Delta \phi+(-\Delta)^s \phi) d x d t .
    \end{array}
\end{equation*}
Now since for all $\omega \subset \subset \Omega $ and for all $t_0>0,$ $ u_k(x, t) \geq C\left(\omega, t_0,n,s\right) $ in $\omega \times\left[t_0, T\right)$, we reach that $u_k(x, t) \geq C$ in $\operatorname{Supp} \phi$ (say $\omega\times[t_1,t_2]$) for all $k \geq 1$. Therefore
\begin{equation*}
    0\leq \left|\frac{f_k\phi}{\left(u_k+\frac{1}{k}\right)^{\gamma(x,t)}}\right|\leq \|\phi C_{\omega,t_1,t_2}^{-\gamma(x,t)}\|_{L^\infty(\Omega_T)}f.
\end{equation*}Hence by the dominated convergence theorem
\begin{equation*}
    \iint_{\Omega_T} \frac{f_k\phi}{\left(u_k+\frac{1}{k}\right)^{\gamma(x,t)}} d x d t \rightarrow \iint_{\Omega_T} \frac{f\phi}{u^{\gamma(x,t)}} d x d t \text { as } k\rightarrow \infty .
\end{equation*}
Thus, we get the following and conclude
\begin{equation*}
    \iint_{\Omega_T}(u_t-\Delta u+(-\Delta)^s u) \phi\, d x d t=\iint_{\Omega_T} \frac{f\phi}{u^{\gamma(x,t)}} d x d t.
\end{equation*}
\subsection*{\textbf{Proof of \cref{mainth8}}}
Consider $u_k$ to be the unique nonnegative solution to \cref{approxvari}. 
Since $\gamma^*>1$, so we can use $u_k^{\gamma^*}\chi_{(0,\tau)}(t)$ as a test function in \cref{approxvari} to get for all $\tau\leq T$,
\begin{equation*}
\begin{array}{c}
 \frac{1}{\gamma^*+1} \int_{\Omega} u_k^{\gamma^*+1}(x, \tau) d x+\int_0^\tau\int_\Omega \nabla u_k\cdot\nabla u_k^{\gamma^*}d x d t\smallskip\\+\frac{1}{2} \int_0^\tau \int_Q \frac{(u_k^{\gamma^*}(x, t)-u_k^{\gamma^*}(y, t))\left(u_k(x, t)-u_k(y, t)\right)}{|x-y|^{n+2 s}} d x d y d t 
\smallskip\\\leq \iint_{\Omega_T} \frac{f_ku_k^{\gamma^*}}{\left(u_k+\frac{1}{k}\right)^{\gamma(x,t)}} d x d t+\frac{1}{\gamma^*+1} \int_{\Omega} u_0^{\gamma^*+1}(x) d x .
\end{array}
\end{equation*}
Now taking supremum over $\tau\in(0,T]$ and using item $i)$ of \cref{algebraic}, we get
\begin{equation*}
\begin{array}{c}
\quad\frac{({\gamma^*}+1)}{2\gamma^*} \sup _{0 \leq \tau \leq T} \int_{\Omega}\left|u_k(x, \tau)\right|^{{\gamma^*}+1} d x+\frac{({\gamma^*}+1)^2}{2}\int_0^T\int_\Omega u_k^{{\gamma^*}-1}|\nabla u_k|^2d x d t\smallskip\\\quad+\int_0^T \int_Q \frac{|u_k^{\frac{{\gamma^*}+1}{2}}(x, t)-u_k^{\frac{{\gamma^*}+1}{2}}(y, t)|^2}{|x-y|^{n+2 s}} d x d y d t \smallskip\\
 \leq \frac{({\gamma^*}+1)^2}{2\gamma^*} 
 \left(\iint_{\Omega_T} \frac{f_ku_k^{\gamma^*}}{\left(u_k+\frac{1}{k}\right)^{\gamma(x,t)}} d x d t+\left\|u_0\right\|_{L^{\gamma^*+1}(\Omega)}\right) .
\end{array}
\end{equation*}
We note that $1<\frac{\gamma^*+1}{\gamma^*}<2$ and hence it follows 
\begin{equation}{\label{varitest2}}
\begin{array}{c}
\sup _{0 \leq \tau \leq T} \int_{\Omega}\left|u_k(x, \tau)\right|^{{\gamma^*}+1} d x+4\int_0^T\int_\Omega |\nabla u_k^{\frac{{\gamma^*}+1}{2}}|^2d x d t\\+2\int_0^T \int_Q \frac{|u_k^{\frac{{\gamma^*}+1}{2}}(x, t)-u_k^{\frac{{\gamma^*}+1}{2}}(y, t)|^2}{|x-y|^{n+2 s}} d x d y d t 
 \leq 4\gamma^*
 \left(\iint_{\Omega_T} \frac{f_ku_k^{\gamma^*}}{\left(u_k+\frac{1}{k}\right)^{\gamma(x,t)}} d x d t+\left\|u_0\right\|_{L^{\gamma^*+1}(\Omega)}\right) .
\end{array}
\end{equation}
Denoting $(\omega_{\tilde{T}})_\delta=\Omega_T \backslash (\Omega_T)_\delta$, we have $u_k(x,t) \geq C((\omega_{\tilde{T}})_\delta,n,s)
>0$ in $(\omega_{\tilde{T}})_\delta$ for all $k$. We estimate like \cref{mainth7} as
\begin{equation*}
    \begin{array}{rcl}
         \iint_{\Omega_T} \frac{f_ku_k^{\gamma^*}}{\left(u_k+\frac{1}{k}\right)^{\gamma(x,t)}} d x d t&=&\iint_{(\Omega_T)_\delta} \frac{f_ku_k^{\gamma^*}}{\left(u_k+\frac{1}{k}\right)^{\gamma(x,t)}} d x d t+\iint_{(\omega_{\tilde{T}})_\delta} \frac{f_ku_k^{\gamma^*}}{\left(u_k+\frac{1}{k}\right)^{\gamma(x,t)}} d x d t \smallskip\\
& \leq& \iint_{(\Omega_T)_\delta}  f u_k^{{\gamma^*}-\gamma(x,t)} d x d t+\iint_{(\omega_{\tilde{T}})_\delta}  \frac{fu_k^{\gamma^*}}{C_{(\omega_{\tilde{T}})_\delta}^{\gamma(x,t)}} d x d t\smallskip\\
& \leq& \iint_{(\Omega_T)_\delta\cap\left\{u_k \leq 1\right\}}  f u_k^{{\gamma^*}-\gamma(x,t)} d x d t +\iint_{(\Omega_T)_\delta\cap\left\{u_k \geq 1\right\}}  f u_k^{\gamma^*-\gamma(x,t)} d x d t \smallskip\\&&
+\iint_{(\omega_{\tilde{T}})_\delta}  \frac{fu_k^{\gamma^*}}{C_{(\omega_{\tilde{T}})_\delta}^{\gamma(x,t)}} d x d t\smallskip\\&\leq&\|f\|_{L^1(\Omega_T)}+\left(1+\left\|C_{(\omega_{\tilde{T}})_\delta}^{-\gamma(\cdot)}\right\|_{L^{\infty}(\Omega_T)}\right) \iint_{\Omega_T} f u_k^{\gamma^*} d x d t. 
    \end{array}
\end{equation*}
Using the above in \cref{varitest2} we get
\begin{equation}{\label{varitestmain}}
    \begin{array}{l}
 \quad  \sup _{0 \leq \tau \leq T} \int_{\Omega}\left|u_k(x, \tau)\right|^{{\gamma^*}+1} d x+2\int_0^T\int_\Omega |\nabla u_k^{\frac{{\gamma^*}+1}{2}}|^2d x d t
\smallskip\\ \leq 4\gamma^*\left(1+\left\|C_{(\omega_{\tilde{T}})_\delta}^{-\gamma(\cdot)}\right\|_{L^{\infty}(\Omega_T)}\right)\iint_{\Omega_T} fu_k^{\gamma^*}d x d t+4\gamma^*\left(\|f\|_{L^1(\Omega_T)}+\left\|u_0\right\|_{L^{\gamma^*+1}(\Omega)}\right).
 \end{array}
\end{equation}
For convenience we take $C_1=4\gamma^*\left(1+\left\|C_{(\omega_{\tilde{T}})_\delta}^{-\gamma(\cdot)}\right\|_{L^{\infty}(\Omega_T)}\right)$ and $C_2=4\gamma^*\left(\|f\|_{L^1(\Omega_T)}+\left\|u_0\right\|_{L^{\gamma^*+1}(\Omega)}\right)$.
\smallskip\\\textbf{Case 1: $f \in L^{\gamma^*+1}\left(0, T ; L^{\left(\frac{n(\gamma^*+1)}{n+2\gamma^*}\right)}(\Omega)\right)$}\\In this case using Hölder inequality first in the space variable and then by Sobolev inequality and another application of H\"older inequality (in time variable) we get
\begin{equation*}
    \begin{array}{rcl}
         \int_0^T\int_{\Omega} f u_k^{\gamma^*} d x d t&=& \int_0^T\int_{\Omega} f \left(u_k^{\frac{\gamma^*+1}{2}}\right)^{\frac{2\gamma^*}{\gamma^*+1}} d x d t\smallskip\\&\leq &C\int_0^T\left(\int_{\Omega}|f(x, t)|^{\frac{n(\gamma^*+1)}{n+2\gamma^*}} d x\right)^{\frac{n+2\gamma^*}{n(\gamma^*+1)}}\left(\int_{\Omega}|u_k^{\frac{\gamma^*+1}{2}}(x, t)|^{2^*} d x\right)^{\frac{(n-2)\gamma^*}{n(\gamma^*+1)}} d t
        \smallskip \\&\leq&C\int_0^T\|f(\cdot,t)\|_{L^{\frac{n(\gamma^*+1)}{n+2\gamma^*}}{(\Omega)}}\left(\int_{\Omega}|\nabla u_k^{\frac{\gamma^*+1}{2}}|^{2} d x\right)^{\frac{\gamma^*}{\gamma^*+1}} d t\smallskip\\&\leq& C\left(\int_0^T\|f(\cdot,t)\|_{L^{\frac{n(\gamma^*+1)}{n+2\gamma^*}}{(\Omega)}}^{\gamma^*+1} d t\right)^{\frac{1}{\gamma^*+1}}\times\left(\int_0^T\int_{\Omega}|\nabla u_k^{\frac{\gamma^*+1}{2}}|^{2} d x d t\right)^{\frac{\gamma^*}{\gamma^*+1}}.
    \end{array}
\end{equation*}
Now since $\frac{\gamma^*}{\gamma^*+1}<1$, so using Young's inequality in the above estimate, we get from \cref{varitestmain} that
\begin{equation*}
    \begin{array}{l}
  \quad \sup _{0 \leq \tau \leq T} \int_{\Omega}\left|u_k(x, \tau)\right|^{{\gamma^*}+1} d x+2\int_0^T\int_\Omega |\nabla u_k^{\frac{{\gamma^*}+1}{2}}|^2d x d t
 \smallskip\\\leq C\left\|f\right\|_{L^{\gamma^*+1}\left(0, T ; L^{\frac{n(\gamma^*+1)}{n+2\gamma^*}}(\Omega)\right)}\times\left(\varepsilon\int_0^T\int_\Omega |\nabla u_k^{\frac{{\gamma^*}+1}{2}}|^2d x d t+C(\varepsilon)\right)+C_2.
 \end{array}
\end{equation*}
Choosing $\varepsilon$ small enough such that $
    \varepsilon C\left\|f\right\|_{L^{\gamma^*+1}\left(0, T ; L^{\frac{n(\gamma^*+1)}{n+2\gamma^*}}(\Omega)\right)}=1,
$ we get
\begin{equation*}
   \sup _{0 \leq \tau \leq T} \int_{\Omega}\left|u_k(x, \tau)\right|^{{\gamma^*}+1} d x+\int_0^T\int_\Omega |\nabla u_k^{\frac{{\gamma^*}+1}{2}}|^2d x d t\leq C,
   \end{equation*}
   which implies that the sequence $\left\{u_k^{\frac{\gamma^*+1}{2}}\right\}_k$ is uniformly bounded in $
L^2(0, T ; H_0^1(\Omega)) \cap L^{\infty}(0, T ; L^{2}(\Omega))$ and $\left\{u_k\right\}_k$ is uniformly bounded in $
L^{\infty}(0, T ; L^{\gamma^*+1}(\Omega))$.
\smallskip\\\textbf{Case 2: $f \in L^{\tilde{r}}\left(\Omega_T\right)$}\\ For this case, we apply Hölder inequality with exponents $\tilde{r}$ and $\tilde{r}^\prime$ to get
\begin{equation}{\label{fusobolevsecond}}
    \begin{array}{rcl}
         \int_0^T\int_{\Omega} f u_k^{\gamma^*} d x d t\leq C\|f\|_{L^{\tilde{r}}\left(\Omega_T\right)}\left[\iint_{\Omega_T} |u_k|^{\gamma^*\tilde{r}^{\prime}} d x d t\right]^{\frac{1}{\tilde{r}^{\prime}}}.
         \end{array}
         \end{equation}
We note that $\gamma^*\tilde{r}^\prime=\frac{(n+2)(\gamma^*+1)}{n}$. Again, we use the same technique as that of \cref{mainth5}, \cref{mainth6} and \cref{mainth7}. Using the Hölder inequality with exponent $\frac{n}{n-2 }$ and $\frac{n}{2 }$ we get
\begin{equation*}
    \begin{array}{rcl}
\iint_{\Omega_T}\left|u_k\right|^{\frac{({\gamma^*}+1)(n+2 )}{n}} d x d t&= &\iint_{\Omega _T}\left|u_k\right|^{2\frac{{\gamma^*}+1}{2}}\left|u_k\right|^{\frac{4 }{n} \frac{{\gamma^*}+1}{2}} d x d t \smallskip\\
 &\leq& \int_0^T\left(\int_{\Omega}\left|u_k(x, t)\right|^{2^* \frac{{\gamma^*}+1}{2}} d x\right)^{\frac{n-2}{n}}\left(\int_{\Omega}\left|u_k(x, t)\right|^{{\gamma^*}+1} d x\right)^{\frac{2 }{n}} d t \smallskip\\
 &=&\int_0^T\|u_k^{\frac{{\gamma^*}+1}{2}}\|_{L^{2 ^*}(\Omega)}^2\left(\int_{\Omega}\left|u_k(x, t)\right|^{{\gamma^*}+1} d x\right)^{\frac{2 }{n}} d t .
\end{array}
\end{equation*}
Now taking supremum over $t\in [0, T]$ and applying the Sobolev embedding as that of \cref{Sobolev embedding}, we can write
\begin{equation*}
\iint_{\Omega_T}\left|u_k\right|^{\frac{({\gamma^*}+1)(n+2 )}{n}} d x d t  \leq C(n)\left(\sup _{0 \leq t \leq T} \int_{\Omega}\left|u_k(x, t)\right|^{{\gamma^*}+1} d x\right)^{\frac{2 }{n}} \left(\iint_{\Omega_T} |\nabla u_k^{\frac{{\gamma^*}+1}{2}}|^2 d x d t \right).
\end{equation*}
Using \cref{varitestmain} and \cref{fusobolevsecond}, from the above inequality we get by convexity argument
\begin{equation*}
\begin{array}{rcl}
\iint_{\Omega_T}\left|u_k\right| ^{\frac{({\gamma^*}+1)(n+2 )}{n}} d x d t &
 \leq &C(n)\left(C_1\iint_{\Omega_T} f u_k^{{\gamma^*}} d x d t+C_2\right)^{\frac{n+2 }{n}} \\
&\leq& C(n) 2^{\frac{2 }{n}}\left(\left(C_1\iint_{\Omega_T} f u_k^{{\gamma^*}} d x d t\right)^{\frac{n+2 }{n}}+C_2^{\frac{n+2}{n}}\right)\\&\leq&C(n) 2^{\frac{2 }{n}}\left(\left(C_1C\|f\|_{L^{\tilde{r}}(\Omega_T)}\right)^{\frac{n+2 }{n}}\left(\iint_{\Omega_T} |u_k|^{\gamma^*\tilde{r}^{\prime}} d x d t\right)^{\frac{n+2 }{n\tilde{r}^\prime}}+C_2^{\frac{n+2}{n}}\right).
\end{array}
\end{equation*}
Now as $\frac{n+2}{n\tilde{r}^\prime}=\frac{\gamma^*}{\gamma^*+1}<1$, therefore using Young's inequality we get
\begin{equation*}
\iint_{\Omega_T}\left|u_k\right|^{\gamma^*\tilde{r}^\prime}d x d t=\iint_{\Omega_T}\left|u_k\right| ^{\frac{({\gamma^*}+1)(n+2 )}{n}} d x d t \leq C,
\end{equation*}
where $C$ is a positive constant independent of $k$. The above boundedness along with \cref{fusobolevsecond,varitestmain} gives that the sequence $\left\{u_k^{\frac{\gamma^*+1}{2}}\right\}_k$ is uniformly bounded in $
L^2(0, T ; H_0^1(\Omega)) \cap L^{\infty}(0, T ; L^{2}(\Omega))$ and $\left\{u_k\right\}_k$ is uniformly bounded in $
L^{\infty}(0, T ; L^{\gamma^*+1}(\Omega))$. We now show $\left\{u_k\right\}_k$ is uniformly bounded in $
L
^2(t_0, T ; H_{l o c}^1(\Omega))$ for each $t_0>0$.
Since $\gamma^*>1$, and $\Omega_T$ is bounded and $\left\{u_k\right\}_k$ is uniformly bounded in $L^{\infty}(0, T ; L^{\gamma^*+1}(\Omega))$, we thus have $\left\{u_k\right\}_k$ is uniformly bounded in $L^2(0, T ; L^2(\Omega))=L^2(\Omega_T)$, in particular in $L^2(K \times(t_0, T))$, for every $K\subset\subset\Omega$ and for each $t_0>0
$. Again as $\left\{u_k^{\frac{\gamma^*+1}{2}}\right\}_k$ is uniformly bounded in $
L^2(0, T ; H_0^1(\Omega))
$, so we have by nonnegativity of $u_k$
\begin{equation*}
\begin{array}{c}
    \int_{t_0}^{T} \int_K  u_k^{\gamma^*-1}|\nabla u_k|^2 d x d t\leq \int_{0}^{T} \int_\Omega  u_k^{\gamma^*-1}|\nabla u_k|^2 d x d t\le \frac{4C}{(\gamma^*+1)^2},
    \end{array}
\end{equation*}
for every $K \subset\subset \Omega$ and for each $t_0>0$. 
Using the positivity of $u_k$ in $\omega \times [t_0,T)$ for all $k$, we conclude
\begin{equation*}
    \int_{t_0}^{T} \int_K |\nabla u_k|^2d x d y d t\leq \frac{C}{\gamma^* c_{(K,t_0
    )}^{\gamma^*-1}}.
\end{equation*}
Since $\left\{u_k\right\}_k$ is increasing in $k$ and is uniformly bounded in $L^{\infty}(0, T ; L^{\gamma^*+1}(\Omega))$, we get by Beppo Levi's Lemma $u_k \uparrow u$ strongly in $L^{\gamma^*+1}\left(\Omega_T\right)$ and hence in $L^1(\Omega_T)$, where $u$ is the pointwise limit of $u_k$ (possibly infinite). Then employing Fatou's lemma we will get $u^{\frac{\gamma^*+1}{2}} \in L^{\infty}(0, T ; L^2(\Omega)) 
\cap  L^2(0, T ; H_0^1(\Omega))$ and $u\in L^2(t_0, T ; H_{l o c}^1(\Omega))$ for each $t_0>0$. \cref{ubase} implies that $u$ satisfies 
$u(\cdot,0)=u_0(\cdot)$ in $L^1$ sense. Also, we have $u(x, t) \geq$ $C\left(\omega,t_0,n,s\right) $ in $\omega \times\left[t_0, T\right)$ for all $\omega \subset\subset \Omega$ and for each $t_0>0$. This $u$ will be a very weak solution to \cref{problemvari} in the sense of \cref{very weak variable}, and the proof follows exactly similar to \cref{mainth7}.
\begin{remark}{\label{uniqueness}}
In the above existence results, the cases where $u\in L^2(0,T;H^1_0(\Omega))
$, we can show that the weak solution $u$ is unique, if it has finite energy. For this we assume that $v$ is another finite energy solution to \cref{problem} or \cref{problemvari}. 
Then, by the construction of $u$ in each existence result, we get $v$ is a supersolution to all the approximating problems. Hence by the comparison principle in \cref{comparison}, we deduce that $v \geq u_k$ for all $k$. Then $v \geq u$. To prove the inverse inequality, let $\phi=v-u$, then $\phi \geq 0$ and $\phi
\in 
L^2(0, T ; H_0^1(\Omega))$ and hence we can use both $\phi_+$ or $\phi_-$ as test functions. Also, we note that $\phi$ solves the problem
\begin{eqnarray*}
    \begin{cases}
\phi_t-\Delta \phi+(-\Delta)^s \phi =f\left(\frac{1}{v^{\gamma(x,t)}}-\frac{1}{u^{\gamma(x,t)}}\right) &\text { in } \Omega_T, \\
\phi(x, t)  =0 &\text { in }\left(\mathbb{R}^n \backslash \Omega\right) \times(0, T), \\
\phi(x, 0) =0 &\text { in } \Omega .
    \end{cases}
\end{eqnarray*}
Since $f\left(\frac{1}{v^{\gamma(x,t)}}-\frac{1}{u^{\gamma(x,t)}}\right) \leq 0$ in $\Omega_T$, by comparison principle, we get $\phi \leq 0$ in $\Omega_T$. Thus $\phi=0$ i.e. $u=v$.
\end{remark}
\begin{remark}
    As we can notice, in each of the existence results, we have $u\in L^2(t_0,T;W^{1,1}_{\operatorname{loc}}(\Omega))$ for each $t_0>0$ and this gives that our definition of a weak solution is well motivated in the sense that we have derived them integrating by parts twice upon multiplying by a test function.
\end{remark}
\section{Asymptotic behaviour}
This section is devoted to the study of the asymptotic behaviour of finite energy solution to the problem \cref{problem}, as $t \rightarrow \infty$, for the particular case where $f$ depends only on $x$ and that $f>0$ with $f \in L^q
(\Omega)
$  and $q$ will be specified later. 
We first state the following existence and uniqueness result to the corresponding mixed local-nonlocal elliptic problem, which can be done as a direct application of \cref{mainth2}. Consider the problem
\begin{equation}{\label{ellip}}
 \begin{array}{lcr}
\begin{cases}
-\Delta w+(-\Delta)^sw=\frac{f}{w^\gamma} 
&  \mbox{ in } \Omega ,\\
w=0 &  \mbox{ in } (\mathbb{R}^n\backslash \Omega) .
\end{cases}
\end{array}
\end{equation}
We define the weak solution to the above problem as:
\begin{definition}{\label{elliptic}} Suppose that $f \in L^1(\Omega)$ is a nonnegative function and $\gamma>0$. We say that $w$ is a very weak solution to problem \cref{ellip} if $w \in L^1(\Omega)$ satisfies the boundary conditions and
$
    \forall \omega \subset \subset \Omega,$ $  \exists c_\omega>0$ such that $ w(x) \geq c_\omega>0, \text { a.e. in } \omega,
$
and for all $\varphi \in \mathcal{C}_0^{\infty}(\Omega)$, we have
\begin{equation*}
    \int_{\Omega} w(-\Delta \varphi+(-\Delta)^s \varphi )d x=\int_{\Omega} \frac{f \varphi}{w^\gamma} d x .
\end{equation*}
\end{definition}
\begin{theorem} Let $f \in L^1
(\Omega)$ be a non-negative function. 
Then for all $\gamma>0$, the problem \cref{ellip} has a nonnegative very weak solution $w$ such that $w^{\frac{\gamma+1}{2}}\in H_0^1(\Omega)$ in the sense of \cref{elliptic}.
\end{theorem}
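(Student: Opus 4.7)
My plan is to run the stationary analogue of the approximation scheme used in \cref{mainth2}. I would first introduce the approximating problems
\begin{equation*}
-\Delta w_k + (-\Delta)^s w_k = \frac{f_k}{(w_k + \tfrac{1}{k})^{\gamma}} \quad \text{in } \Omega, \qquad w_k = 0 \quad \text{in } \mathbb{R}^n \setminus \Omega,
\end{equation*}
where $f_k = T_k(f)$. For each fixed $k$, the nonlinearity is controlled by $k^{\gamma+1} \|f\|_{L^\infty}$, so a standard Schauder/monotonicity argument together with the elliptic analogue of \cref{comparison} and \cref{corl1} produces a unique nonnegative $w_k \in H_0^1(\Omega) \cap L^\infty(\Omega)$. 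Since $f_k \leq f_{k+1}$, the same comparison principle gives monotonicity of $\{w_k\}_k$ in $k$, and an elliptic weak Harnack inequality applied to $w_1$ (which solves an equation with bounded, nonnegative right-hand side) provides $w_1 \geq c_\omega > 0$ on every $\omega \subset\subset \Omega$, hence $w_k \geq c_\omega$ uniformly in $k$.

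Next, for the key energy bound, I would use $((w_k+\varepsilon)^{\gamma} - \varepsilon^{\gamma})$ as test function with $0 < \varepsilon < 1/k$ and let $\varepsilon \to 0$ via Fatou. The local term $\int_\Omega \nabla w_k \cdot \nabla w_k^{\gamma}$ produces $\frac{4\gamma}{(\gamma+1)^2}\|\nabla w_k^{(\gamma+1)/2}\|_{L^2}^2$, and the nonlocal term is bounded below using item (i) of \cref{algebraic} by a multiple of $[w_k^{(\gamma+1)/2}]_{H^s}^2$, while on the right one has
\begin{equation*}
\int_\Omega \frac{f_k \, w_k^{\gamma}}{(w_k + \tfrac{1}{k})^{\gamma}} \, dx \leq \int_\Omega f_k \, dx \leq \|f\|_{L^1(\Omega)}.
\end{equation*}
This yields the uniform estimate $\|w_k^{(\gamma+1)/2}\|_{H_0^1(\Omega)}^2 \leq C(\gamma)\|f\|_{L^1(\Omega)}$.

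By monotonicity, $w_k \uparrow w$ pointwise a.e., and Fatou delivers $w^{(\gamma+1)/2} \in H_0^1(\Omega)$; in particular $w \in L^1(\Omega)$ via Sobolev embedding, the boundary condition passes to the limit by the zero extension, and the uniform lower bound transfers to $w \geq c_\omega$ on $\omega \subset\subset \Omega$. For the weak formulation, I would fix $\varphi \in \mathcal{C}_0^\infty(\Omega)$ and pass to the limit: the linear left-hand side converges by weak convergence of $w_k^{(\gamma+1)/2}$ (or direct $L^1$ convergence of $w_k$ against the smooth test function $-\Delta \varphi + (-\Delta)^s \varphi$), and on the right the integrand is dominated on $\operatorname{supp} \varphi$ by $\|\varphi\|_\infty f/c_{\operatorname{supp}\varphi}^{\gamma} \in L^1$, so dominated convergence concludes.

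The main obstacle, as in the parabolic case for $\gamma > 1$, is that we do not control $w_k$ itself in $H_0^1(\Omega)$ uniformly; only $w_k^{(\gamma+1)/2}$ belongs to $H_0^1(\Omega)$ with uniform bound. This is precisely the regularity asserted in the statement, and the strict positivity on compact subsets is what makes the singular term meaningful and the passage to the limit rigorous in the very weak sense of \cref{elliptic}.
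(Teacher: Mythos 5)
Your proposal follows essentially the same route as the paper's proof: the same stationary approximation with $f_k=T_k(f)$, monotonicity and the elliptic Harnack inequality for the uniform local lower bound, the test function $((w_k+\varepsilon)^{\gamma}-\varepsilon^{\gamma})$ with $\varepsilon\to 0$ combined with \cref{algebraic} to obtain the uniform bound on $w_k^{(\gamma+1)/2}$ in $H_0^1(\Omega)$, and a monotone limit plus dominated convergence to pass to the very weak formulation. The argument is correct and matches the paper's in all essential steps.
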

\begin{proof} 
Let $w_k$ be the unique positive bounded solution to the approximating problem
\begin{equation}{\label{ellip2}}
 \begin{array}{lcr}
\begin{cases}
-\Delta w_k+(-\Delta)^sw_k=\frac{f_k}{(w_k+\frac{1}{k})^\gamma} 
&  \mbox{ in } \Omega ,\\
w_k >0 & \mbox{ in } \Omega,\\
w_k=0 &  \mbox{ in } (\mathbb{R}^n\backslash \Omega);
\end{cases}
\end{array}
\end{equation}
with $f_k:=\min (k, f)$. We note that the existence of $w_k$ follows using a simple monotonicity argument. More precisely, one can prove a similar version of existence results and comparison principle as of \cref{exist1} for the elliptic case also, and using that, one can find the elliptic version of \cref{comparison} and \cref{corl1}. Again, by the comparison principle, we deduce that the sequence $\left\{w_k\right\}_k$ is increasing in $k$. By the Harnack inequality,  see [\citealp{garainholder}, Theorem 8.3], it holds that for any set $\omega \subset \subset \Omega$, for all $k$ we have,
$
    w_k(x)\geq w_1(x)\geq c(\omega,n,s) \text { in } \omega.
$
Also, the details of existence, uniqueness, positivity and monotonicity of $w_k$'s can be found in [\citealp{garain}, Lemma 3.2].\smallskip\\Now taking $((w_k+\varepsilon)^\gamma-\varepsilon^\gamma), 0<\varepsilon<1/k,$ as a test function in \cref{ellip2} and letting $\varepsilon\to 0$, it follows that
\begin{equation*}
    \int_\Omega \nabla w_k\cdot\nabla w_k^\gamma+\frac{1}{2} \iint_{\mathbb{R}^{2n}} \frac{\left(w_k(x)-w_k(y)\right)\left(w_k^\gamma(x)-w_k^\gamma(y)\right)}{|x-y|^{n+2 s}} d x d y \leq \int_{\Omega} f(x) d x .
\end{equation*}
By \cref{algebraic}, we have
\begin{equation*}
    \left(w_k(x)-w_k(y)\right)\left(w_k^\gamma(x)-w_k^\gamma(y)\right) \geq C\left(w_k^{\frac{\gamma+1}{2}}(x)-w_k^{\frac{\gamma+1}{2}}(y)\right)^2,
\end{equation*}
and on the other hand 
\begin{equation*}
    \nabla w_k\cdot\nabla w_k^\gamma=\gamma w_k^{\gamma -1}|\nabla w_k|^2,
\end{equation*}
we deduce that the sequence $\left\{w_k^{\frac{\gamma+1}{2}}\right\}_k$ is bounded in the reflexive Banach space $X_0^s(\Omega)\cap H_0^1(\Omega)$. Now by the monotonicity of $\left\{w_k\right\}_k$, using Beppo-Levi's theorem, we get the existence of a measurable function $w$ such that $w_k \uparrow w$ a.e. in $\mathbb{R}^n,$ $ w_k^{\frac{\gamma+1}{2}}\rightharpoonup w^{\frac{\gamma+1}{2}}$ weakly in $X_0^s(\Omega)\cap H_0^1(\Omega)$. Clearly $w \geq c(\omega)$ for any set $\omega \subset \subset \Omega$. Then, letting $k \rightarrow \infty$ in the approximating problems and using the dominated convergence theorem, we can show that $w$ is a very weak solution to problem \cref{ellip} with $w^{\frac{\gamma+1}{2}} \in 
H_0^1(\Omega)$. We refer [\citealp{garain}, Theorem $2.16$] for the boundedness of $w$. One can find conditions when $w\in H^1_0(\Omega)$ in \cite{garain}. Further, [\citealp{garain}, Corollary $5.2$] gives such solution is unique.
\end{proof}
We now write the asymptotic behaviour of solutions to problem \cref{problem} under suitable conditions on $f$ and $u_0$.
\begin{theorem}{\label{asypm}}
    Suppose that $f \in L^q
(\Omega)
$ 
is a non-negative function depending only on $x$, and $q$ is such that solution to the problem \cref{ellip} is in $H^1_0(\Omega)$ and is unique. Let $u$ be a finite energy solution to problem \cref{problem} with 
$0 \leq u_0 \leq w$, where $w$ is the solution to \cref{ellip}. Then $u(\cdot, t) \rightarrow w$ as $t \rightarrow \infty$, in $L^\sigma(\Omega)$, where $\sigma<2^*$.
\end{theorem}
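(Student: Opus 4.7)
The strategy is to work with the error $v(x,t) := w(x) - u(x,t)$ and drive the energy $E(t) = \tfrac{1}{2}\|v(\cdot,t)\|_{L^2(\Omega)}^2$ to zero via a parabolic energy identity; once $L^2$ convergence is in hand, the sandwich $0 \leq u \leq w$ upgrades it to $L^\sigma$ for any $\sigma < 2^*$ by interpolation against $w$.

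The first ingredient is the pointwise bound $0 \leq u(x,t) \leq w(x)$ a.e.\ on $\Omega \times (0,\infty)$. Since $w$ solves $-\Delta w + (-\Delta)^s w = f/w^\gamma$ and satisfies $w(x) \geq u_0(x)$, the constant-in-time extension of $w$ is a stationary supersolution to \cref{problem}; applying \cref{comparison} on each cylinder $\Omega \times (0,T)$ (formally at the level of the monotone approximations $u_k$ of \cref{approxexxist}, then passing to the limit) yields $u \leq w$. Subtracting the two PDEs gives
\begin{equation*}
v_t - \Delta v + (-\Delta)^s v = f\bigl(w^{-\gamma} - u^{-\gamma}\bigr),
\end{equation*}
with $v(\cdot,0) = w - u_0 \in L^2(\Omega)$ and $v(\cdot,t) \in H_0^1(\Omega)$ for a.e.\ $t$ (by the finite-energy assumption on $u$ and $w \in H_0^1(\Omega)$). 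Because $0 \leq u \leq w$ and $\gamma > 0$, the right-hand side is pointwise non-positive. Testing with $v$ (rigorously justified below) produces
\begin{equation*}
\tfrac{1}{2}\|v(\cdot,T)\|_{L^2}^2 + \int_0^T \|\nabla v\|_{L^2}^2\,dt + \tfrac{1}{2}\int_0^T\!\iint_{\mathbb{R}^{2n}}\frac{|v(x,t)-v(y,t)|^2}{|x-y|^{n+2s}}\,dx\,dy\,dt \leq \tfrac{1}{2}\|w - u_0\|_{L^2}^2,
\end{equation*}
so $E$ is non-increasing on $(0,\infty)$ and $\int_0^\infty \|\nabla v(\cdot,t)\|_{L^2}^2\,dt < \infty$.

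From this last integrability, select $t_n \to \infty$ with $\|\nabla v(\cdot,t_n)\|_{L^2} \to 0$; the Poincar\'e inequality \cref{p} applied to $v(\cdot,t_n) \in H_0^1(\Omega)$ then gives $\|v(\cdot,t_n)\|_{L^2} \to 0$, and the monotonicity of $E$ upgrades this to $\|w - u(\cdot,t)\|_{L^2} \to 0$ as $t \to \infty$. For the $L^\sigma$ statement, the bound $0 \leq u(\cdot,t) \leq w$ together with $w \in H_0^1(\Omega) \hookrightarrow L^{2^*}(\Omega)$ (\cref{Sobolev embedding}) yields $\|w - u(\cdot,t)\|_{L^{2^*}} \leq 2\|w\|_{L^{2^*}} =: M$ uniformly in $t$. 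For $\sigma \in [2,2^*)$, choosing $\theta \in (0,1]$ with $\tfrac{1}{\sigma} = \tfrac{\theta}{2} + \tfrac{1-\theta}{2^*}$, H\"older interpolation gives
\begin{equation*}
\|w - u(\cdot,t)\|_{L^\sigma} \leq \|w - u(\cdot,t)\|_{L^2}^\theta \, M^{1-\theta} \longrightarrow 0;
\end{equation*}
for $1 \leq \sigma < 2$, H\"older on the bounded domain $\Omega$ completes the argument.

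The main technical obstacle is the rigorous derivation of the energy identity: because $f(w^{-\gamma} - u^{-\gamma})$ is genuinely singular wherever $u$ vanishes (typically on $\partial\Omega$), one cannot test the PDE for $v$ directly with $v$. The argument must instead be carried out at the level of the interior-positive approximations $u_k$ of \cref{approxexxist} (where each term $f_k/(u_k + 1/k)^\gamma$ is bounded), by testing the equation for $u_k$ against $w - u_k \in H_0^1(\Omega)$, exploiting the uniform $L^\infty(0,T;L^2(\Omega)) \cap L^2(0,T;H_0^1(\Omega))$ controls available in the finite-energy setting, and then passing to the limit $k \to \infty$ through monotone convergence on $u_k \uparrow u$ combined with dominated convergence on the singular term (with the pointwise domination $0 \leq u_k \leq w$ and the interior positivity $u_k \geq c_\omega$ from \cref{Harnack}).
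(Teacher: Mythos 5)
Your route is genuinely different from the paper's. The paper does not use an energy/dissipation argument at all: for $u_0=0$ it shows, via the time-shift comparison $v(x,t)=u(x,t_1+t)$, that $t\mapsto u(x,t)$ is nondecreasing and bounded by $w$, extracts the pointwise limit $\hat u$, identifies $\hat u$ as a very weak solution of \cref{ellip} by testing only against $\varphi\in\mathcal{C}_0^\infty(\Omega)$ on the slabs $\Omega\times(k,k+1)$ (where monotonicity in $t$ supplies the domination), and invokes the assumed uniqueness to conclude $\hat u=w$; the case $0<u_0\le w$ is then sandwiched between the $u_0=0$ solution and $w$. That route never pairs the singular right-hand side with anything larger than a fixed $\mathcal{C}_0^\infty$ function, which is precisely why it closes. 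Your route, if completed, would buy a one-shot treatment of all $0\le u_0\le w$ and quantitative decay of $\|w-u(\cdot,t)\|_{L^2}$ without invoking uniqueness of $w$ at the end, but it must confront the singular pairing head-on.

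And that is where there is a genuine gap. You correctly flag the energy identity as the obstacle, but the fix you propose does not work as stated. Testing the equation for $u_k$ against $w-u_k$ and the elliptic equation for $w$ against the same function produces the right-hand side $\iint\bigl(\tfrac{f}{w^{\gamma}}-\tfrac{f_k}{(u_k+1/k)^{\gamma}}\bigr)(w-u_k)$, and this is \emph{not} sign-definite at fixed $k$: near $\partial\Omega$ one has $w\to 0$ so $f/w^{\gamma}$ blows up, while $f_k/(u_k+1/k)^{\gamma}\le k^{\gamma+1}$ stays bounded, so the integrand there is positive. Consequently "the right-hand side is pointwise non-positive, hence $E$ is non-increasing" is only true for the formal limit equation, and dominated convergence "with the pointwise domination $0\le u_k\le w$ and the interior positivity $u_k\ge c_\omega$" does not justify the passage: interior positivity gives no control near the parabolic boundary, and the natural majorant of the positive part is $f w^{1-\gamma}$, whose integrability is exactly the Lazer--McKenna-type issue for $\gamma>1$ (it does follow if one may test the elliptic equation with $w$ itself, i.e.\ from $w$ being a genuine finite-energy solution, but that must be said and used). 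Two further points need justification: (i) pairing the elliptic equation for $w$ with $w-u_k\in H_0^1(\Omega)$ exceeds the $\mathcal{C}_0^\infty$ test class of \cref{elliptic}; (ii) the limit $k\to\infty$ in the right-hand side should be handled by splitting it, applying monotone convergence to $\iint f w^{-\gamma}(w-u_k)$ (finite by (i)) and Fatou to the nonnegative term $\iint f_k(w-u_k)/(u_k+1/k)^{\gamma}$, to obtain $\limsup_k(\text{RHS})\le \iint f(w^{-\gamma}-u^{-\gamma})(w-u)\le 0$, rather than dominated convergence. With these repairs your argument can be closed, but as written the central inequality is unproved.
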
 
\begin{proof}We divide the proof into two cases according to the value of the initial condition $u_0$.\smallskip\\
\textbf{The first case $u_0=0$:} 
In this case using the comparison principle as in \cref{comparison}, we get that $u\leq w$ in $\Omega\times (0,T)$ for all $T>0$. Hence, $u$ is globally defined. We now show that $u(x,\cdot)$ is increasing in $t$. Since $u$ has finite energy, 
 we fix $t_1>0$ 
and define $v(x,t)=u(x,t_1+t),$ then $v$ satisfies the problem
\begin{eqnarray*}
    \begin{cases}
        v_t-\Delta v+(-\Delta)^s v  =\frac{f(x)}{v^\gamma(x, t)} & \text { in } \Omega_T
        , \\
v(x, t)  =0  & \text { in }\left(\mathbb{R}^n \backslash \Omega\right) \times(0, T), \\
v(x, 0)  =u\left(x, t_1\right) & \text { in } \Omega.
    \end{cases}
\end{eqnarray*}
Now as $u\left(x, t_1\right)\geq u_0\equiv 0$ in $\Omega$, using the comparison principle as in \cref{comparison} again, we have $u \leq v$ in $\Omega_T$. Therefore for all $t_1>0$, we have $u(x, t) \leq u\left(x, t_1+t\right)$ for all $x \in \mathbb{R}^n$. Therefore $u(\cdot,t)$ is increasing in $t$. Since $u \leq w$, so there exists a measurable function $\hat{u}(\cdot)=\underset{t \rightarrow \infty}{\operatorname{lim }\operatorname{sup}}~ u(\cdot, t)$. By dominated convergence theorem, 
$u(\cdot,t)\rightarrow \hat{u}(\cdot)$ as $t\rightarrow\infty$ in $L^\sigma(\Omega)$ for $\sigma<2^*$. 
Also, we have $\hat{u} \leq w$. It is now sufficient to show that $\hat{u}$ is a very weak solution to problem \cref{ellip}.\\
Let $\varphi \in \mathcal{C}_0^{\infty}(\Omega)$, then using $\varphi
$ as a test function in \cref{problem} and integrating in $\Omega \times
(k, k+1)$, 
we get
\begin{equation*}
    \int_{\Omega}(u(x, k+1)-u(x, k)) \varphi(x) d x+\int_k^{k+1} \int_{\Omega} u(x, t)(-\Delta \varphi+(-\Delta)^s \varphi )d x d t=\int_k^{k+1} \int_{\Omega} \frac{f(x)}{u^\gamma(x, t)} \varphi(x) d x d t .
\end{equation*}
Now clearly 
we have
\begin{equation*}
    \int_{\Omega}(u(x, k+1)-u(x, k))|\varphi(x)| d x \rightarrow 0 \text { as } k \rightarrow \infty .
\end{equation*}
On the other hand, 
in the set $\Omega \times(k, k+1)$ using the monotonicity of $u$ in the variable $t$, we have
\begin{equation*}
    \frac{f(x)}{u^\gamma(x, k+1)}|\varphi(x)| \leq \frac{f(x)}{u^\gamma(x, t)}|\varphi(x)| \leq \frac{f(x)}{u^\gamma(x, k)}|\varphi(x)|, \quad\forall
    t\in(k,k+1),
\end{equation*}
\begin{equation*}
   u(x, k)\left|-\Delta \varphi\right| \leq u(x, t)\left|-\Delta \varphi\right| \leq u(x, k+1)\left|-\Delta \varphi\right|, \quad \forall
   t\in(k,k+1),
\end{equation*}
and
\begin{equation*}
    u(x, k)\left|(-\Delta)^s \varphi\right| \leq u(x, t)\left|(-\Delta)^s \varphi\right| \leq u(x, k+1)\left|(-\Delta)^s \varphi\right| ,\quad\forall 
    t\in(k,k+1).
\end{equation*}
Then, since $\phi, \Delta \phi, (-\Delta)^s\phi$ are bounded, so by the dominated convergence theorem, we will get that, as $k\rightarrow \infty$,
\begin{equation*}
    \int_k^{k+1} \int_{\Omega} u(x, t)(-\Delta \varphi+(-\Delta)^s \varphi )d x d t \rightarrow \int_{\Omega} \hat{u}(x)(-\Delta \varphi+(-\Delta)^s \varphi) d x,
\end{equation*}
and
\begin{equation*}
    \int_k^{k+1} \int_{\Omega} \frac{f(x)}{u^\gamma(x, t)} \varphi(x) d x d t \rightarrow \int_{\Omega} \frac{f(x)}{\hat{u}^\gamma(x)} \varphi(x) d x .
\end{equation*}
Hence, $\hat{u}$ is a very weak solution to problem \cref{ellip}. By the uniqueness we then get $\hat{u}=w$.\smallskip\\
\textbf{The second case $0<u_0 \leq w$:} Let us denote by $\hat{u}$ a finite energy solution to problem \cref{problem} in this case. Then by the comparison principle in \cref{comparison}, we deduce that $u \leq \hat{u} \leq w$, where $u$ is the weak solution to problem \cref{problem} with $u_0=0$. Hence by the convergence result of the first case, we get that $\hat{u}(\cdot, t) \rightarrow w$, as $t \rightarrow \infty$, strongly in $L^\sigma(\Omega)$ for all $\sigma<2^*$.
\end{proof}

\bibliography{main.bib}

\bibliographystyle{plain}


\end{document}